\documentclass[A4, 8pt]{article}
\usepackage[left=2cm,right=2cm,top=2.54cm,bottom=2.54cm]{geometry}
\usepackage[utf8]{inputenc}
\usepackage{array}
\usepackage{url}
\usepackage{caption}
\usepackage{subcaption}
\usepackage{bbm}
\usepackage{enumitem}   
\usepackage{mathtools}
\usepackage{graphicx}

\newcommand{\bq}{\begin{eqnarray*}}
\newcommand{\bqn}[1]{\begin{eqnarray}\label{#1}}
\newcommand{\eq}{\end{eqnarray*}}
\newcommand{\eqn}{\end{eqnarray}}

\usepackage{amsthm, amsmath, amsfonts, mathtools, amssymb} 
\DeclareMathOperator*{\argmax}{argmax}

\usepackage{sgame, tikz} 
\usetikzlibrary{trees, calc} 
\usepackage[countmax]{subfloat}
\usepackage{hyperref} 
\usepackage{cases}
\usepackage{float}

\usepackage{listings}
\usepackage{csvsimple}

\usepackage{xcolor}

\definecolor{customred}{rgb}{0.6, 0, 0}

\definecolor{codegray}{rgb}{0.5,0.5,0.5}
\definecolor{codepurple}{rgb}{0.54, 0.17, 0.89}
\definecolor{codegreen}{rgb}{0.55, 0.71, 0.0}
\definecolor{backcolour}{rgb}{1, 1, 1}
\definecolor{blue(pigment)}{rgb}{0.2, 0.2, 0.6}
\definecolor{ao(english)}{rgb}{0.0, 0.5, 0.0}

\usepackage{scalerel}

\lstdefinestyle{mystyle}{
    backgroundcolor=\color{backcolour},   
    commentstyle=\color{codepurple},
    keywordstyle=\color{codegray},
    numberstyle=\tiny\color{codegray},
    stringstyle=\color{codegreen},
    basicstyle=\ttfamily\footnotesize,
    breakatwhitespace=false,         
    breaklines=true,                 
    captionpos=b,                    
    keepspaces=true,                 
    numbers=left,                    
    numbersep=5pt,                  
    showspaces=false,                
    showstringspaces=false,
    showtabs=false,                  
    tabsize=2
}

\numberwithin{equation}{section}

\newtheorem{theorem}{Theorem}[section]

\newtheorem{proposition}[theorem]{Proposition}

\newtheorem{lemma}[theorem]{Lemma}
\newtheorem{cor}[theorem]{Corollary}
\newtheorem{rem}[theorem]{Remark}

\newcommand\numberthis{\addtocounter{equation}{1}\tag{\theequation}}
\newcommand{\R}{\mathbb{R}}

\newcommand{\E}{\mathbb E}
\newcommand{\Q}{\mathcal Q}

\newtheoremstyle{break}
  {\topsep}{\topsep}%
  {\itshape}{}%
  {\bfseries}{}%
  {\newline}{}%
\theoremstyle{break}

\allowdisplaybreaks

\begin{document}
\thispagestyle{empty}
	\begin{center}
		\Large{\textbf{A forward algorithm for a class of Markov zero-sum stopping games}} \\
        \begingroup
            \centering
            \normalsize{Nhat-Thang Le} \\
            Toulouse School of Economics \\
            Institut de Mathématiques de Toulouse \\
            University of Toulouse \\[2mm]
            
        \endgroup
        \end{center}
\setbox5=\vbox{
\hbox{lenhat.thang@tse-fr.eu\\[1mm]}
\vskip1mm
\hbox{Toulouse School of Economics,\\}
\hbox{1, Esplanade de l'université,\\}
\hbox{31080 Toulouse cedex 6, France.\\}
\hbox{Institut de Mathématiques de Toulouse,\\}
\hbox{Université Paul Sabatier, 118, route de Narbonne,\\}
\hbox{31062 Toulouse cedex 9, France.\\[1mm]}
}
        \begin{abstract}
            In this paper, we propose a new efficient algorithm to compute the value function for zero-sum stopping games featuring two players with opposing interests. This can be seen as a game version of the ``forward algorithm" for (one-player) optimal stopping problem, first introduced by Irle \cite{Irle} for discrete-time Markov chains and later revisited by Miclo \& Villeneuve \cite{MS} for continuous-time Markov processes on general state spaces. This paper focuses on a game driven by a homogeneous continuous-time Markov chain taking values in a finite state space and also discusses about the number of iterations needed. Illustrated computational implementations for a few particular examples are also provided.
        \end{abstract}
{\small
\textbf{Keywords: }
Optimal stopping game, forward algorithm, homogeneous continuous-time Markov chain, Nash equilibrium, optimal stopping problem.\par
\vskip.3cm
\textbf{Fundings: }
This work  was supported by the grants CIMI, MINT.
}

\pagenumbering{arabic}

\section{Introduction}
 In this paper, we consider a discounted zero-sum stopping game featuring a sup-player and an inf-player. The sup-player selects a stopping time $\tau$ to maximize, while the inf-player selects a stopping time $\gamma$ to minimize, the expected payoff
    \begin{align} \label{expected payoff}
        R_x(\tau,\gamma) \coloneqq  \mathbb E_x[\mathrm{e}^{-\beta \tau}\psi(X_\tau) \mathbf{1}_{\tau \leq \gamma} + \mathrm{e}^{-\beta \gamma}\phi(X_\gamma) \mathbf{1}_{\tau >\gamma} ],
    \end{align}
where $\beta >0$ is a fixed discounted rate. Here, $(X_t)_{t\geq 0}$ is a right-continuous, homogeneous continuous-time Markov chain taking values in a finite state space $E$, equipped with probability measures $(\mathbb P_x)_{x \in E}$ such that $\mathbb P_x(X_0 = x) = 1$ for all $x \in E$, and admits a Markov generator $\mathcal{Q}\coloneqq  (Q(x,y))_{x,y \in E}$. The payoff functions $\psi,\  \phi \in \R^E$ (the set of all functions on $E$) are given such that $0 \leq \psi \leq \phi$. Since $E$ is finite, we also use the convention that $\mathrm{e}^{-\beta \eta} f(X_\eta) = 0$ on the set $\{ \eta = +\infty\}$ for any stopping time $\eta$ and $f\in \R^E$. Define respectively the upper value and lower value functions by
    \begin{align*}
        \forall x \in E, \qquad \overline V(x) \coloneqq  \inf_\gamma \sup_\tau R_x(\tau, \gamma) \qquad \text{and} \qquad \underline V(x) \coloneqq  \sup_\tau \inf_\gamma  R_x(\tau, \gamma),
    \end{align*}
where the suprema and infima are taken over the set $\mathcal M$ of all stopping times (with respect to the natural filtration of $X$). It is easy to see that 
    \begin{align*}
        \forall x \in E, \quad \psi(x) \leq \underline V(x) \leq \overline V(x) \leq \phi(x).
    \end{align*}
If in addition, we have $ \underline V \geq \overline V $, i.e. $  \underline V = \overline V$, the game is said to have a value. In such cases, we will denote the common value function as $V$. Now, suppose that there are two stopping times $(\tau^*, \gamma^*)$ satisfying 
    \begin{align} \label{intro: NE criteria}
       \forall \tau, \gamma \in \mathcal M, \quad  R_x(\tau, \gamma^*) \leq R_x(\tau^*, \gamma^*) \leq R_x(\tau^*, \gamma) ,
    \end{align}
the pair $(\tau^*,\gamma^*)$ is then referred to as a Nash equilibrium (NE) or a saddle point. Clearly, if there exists such a NE, then the game has a value and the value function is given by $V(x) = R_x (\tau^*,\gamma^*)$ for all $x \in E$.

It is well-known that, when the state space $E$ is finite, the game always has a value and the pair $(\tau',\gamma')$, defined by 
    \begin{align} \label{intro: NE pair}
        \tau' \coloneqq  \inf\{ t\geq0: \ V(X_t) = \psi(X_t) \}\qquad \text{and} \qquad \gamma' \coloneqq  \inf\{ t\geq0: \ V(X_t) = \phi(X_t) \},
    \end{align}
forms a NE for the game. This is even true for more general settings, e.g. (compare Benssousan \& Friedman \cite{Bens-Fried},\cite{Bens-Fried2}, Friedman \cite{zFz} and Karatzas \& Wang \cite{Kar}) when $E$ is replaced by $\R$ and $(X_t)_{t\geq 0}$ is a diffusion process,  under the conditions 
    \begin{align*}
        \forall x \in E, \quad \E_x \left[ \sup_{t \in \R^+} \mathrm{e}^{-\beta t} \phi(X_t) \right] < + \infty \quad \text{and} \qquad \lim_{t\rightarrow + \infty} \mathrm{e}^{-\beta t} \phi(X_t) = 0,
    \end{align*}
the game still has a value and the pair $(\tau',\gamma')$ defined in \eqref{intro: NE pair} remains a NE. In fact, the game has a value even when the integrability  condition $ \E_x \left[ \sup_{t \in \R^+} \mathrm{e}^{-\beta t} \phi(X_t) \right] < + \infty$ is removed but the pair $(\tau',\gamma')$ may not be a NE anymore, see Ekstr{\"o}m \& Villeneuve \cite{Erik}. For similar results in continuous settings, see e.g. Ekstr{\"o}m \& Peskir \cite{Erik-Peskir} and Peskir \cite{Peskir}. For an account of the general theories of optimal stopping problems and optimal stopping games, see the books by Shiryaev \cite{Shiryaev} and Peskir \& Shiryaev \cite{Pesk-Shir}.

In our settings, even if the game is guaranteed to have a value, the problem of computing this value function $V(x)$ is still hard. We can try to compute the function $ y \mapsto R_y(\tau',\gamma')$ but the sets $\{ x \in E: \ V(x) = \psi(x) \}$ (abbreviated $\{ V = \psi \}$) and $\{ x\in E: \ V(x)= \phi(x) \}$ (abbreviated $\{ V = \phi \}$) are both unknown. An alternative approach is to try plugging in all hitting times of fixed subsets of $E$ in the function $R_x(\cdot, \cdot) $ and check  if the criteria \eqref{intro: NE criteria} holds. Unfortunately, this is too costly since the number of subsets of $E$ is $2^{|E|}$, where $|E|$ is the cardinality of $E$, and thus grows exponentially fast with $|E|$. Also, even if we are lucky to guess correctly what $\{ V = \psi\}$ and $\{ V =\phi\}$ are, it is not easy to verify \eqref{intro: NE criteria}. This paper aims exactly at resolving this problem by proposing a new efficient algorithm to compute the value function $V$ as well as recover the critical sets $\{ V =\psi\}$ and $\{ V =\phi \}$. In essence, we extend the ideas of the "forward algorithm" in one-player games, initiated by Irle \cite{Irle} (for discrete-time Markov chains) and later revisited in Miclo \& Villeneuve \cite{MS} (for continuous-time Markov processes), to zero-sum two-player games. Our algorithm also recovers some known properties of $V$. For example, if $\psi$ is strictly smaller than $\phi$ everywhere, then it holds that
    \begin{align*}
        \begin{cases}
            \forall x \in \{ V = \psi \}, &\quad \Q[V](x) - \beta V(x) \leq 0 \\ 
            \forall x \in \{ V = \phi \}, &\quad \Q[V](x) - \beta V(x) \geq 0 \\
            \forall x \in \{ \psi < V < \phi \}, &\quad \Q[V](x) - \beta V(x) = 0 \label{intro: LV-bV = 0} \numberthis
        \end{cases}
    \end{align*}
where we have used the notation 
    \begin{align*}
        \forall f \in \R^E, \ \forall x \in E, \qquad \mathcal{Q}[f](x) \coloneqq  \sum_{y \in E} Q(x,y) f(y).
    \end{align*}
We quickly recall that the Markov generator $\mathcal{Q}= (Q(x,y))_{x,y \in E}$ is just a square matrix satisfying 
    \begin{align*}
        \forall x \in E, \quad \sum_{y \in E} Q(x,y) = 0 \qquad \text{and} \qquad \forall x \neq y, \quad Q(x,y) \geq 0.
    \end{align*}
The last equation in \eqref{intro: LV-bV = 0} is particularly useful because, once the critical sets $\{V=\psi\}$ and $\{V=\phi\}$ are known, the problem reduces to solving a linear system for $V$, which can be handled efficiently using standard numerical linear algebra methods. See the next section for details. A precise statement in the general case $\psi \leq \phi$ should read
    \begin{align*}
        \begin{cases}
            \forall x \in \{ V = \psi \} \setminus \{\phi = \psi \}, &\quad \Q[V](x) - \beta V(x) \leq 0 \\ 
            \forall x \in \{ V = \phi \} \setminus \{\phi = \psi \}, &\quad \Q[V](x) - \beta V(x) \geq 0 \\
            \forall x \in \{ \psi < V < \phi \}, &\quad \Q[V](x) - \beta V(x) = 0
        \end{cases}
    \end{align*}
so the sign of $ \Q[V](x) - \beta V(x)$ on $\{\phi = \psi\}$ is unknown but we can still solve for $V$ as long as we know the critical sets (see Lemmas \ref{lem: Q-beta = 0}, \ref{lem: critical obs} and Remark \ref{rem:compute} below). The number of steps taken before our algorithm terminates is strictly less than $|E|^2$, see Subsection \ref{subsec: aux} for more details. At each step, there is a system of linear equations to be solved and some conditions to check in order to obtain a sequence of shrinking sets, thus our algorithm has a similar flavor of "forward algorithm" by Irle \cite{Irle} and Miclo \& Villeneuve \cite{MS}. For completeness, since the computation of the function
    \begin{align} \label{citeV0}
        \forall x \in E, \quad V_0(x) \coloneqq  \sup_{\tau} \E_x[ \mathrm{e}^{-\beta \tau} \psi(X_\tau)]
    \end{align}
is needed in our algorithm, we will briefly recall the forward algorithm for one-player optimal stopping problem in Section \ref{subsec: forward for one player}. It is also well-known that $V_0$ is the smallest $\beta$-excessive function dominating $\psi$ (i.e. $V_0 \geq \psi$), where we recall that a function $f \in \R^E$ is $\beta$-excessive if $\Q[f](x) - \beta f(x) \leq 0$ for all $x \in E$. 

    We present a brief summary of our algorithm, outlining its key steps and core principles. To the best of our knowledge, there are no existing algorithms for computing the value function in the present setting, making our proposed algorithm entirely novel. First, we perform the "forward algorithm" to compute $V_0$ in \eqref{citeV0} and obtain the set $\{V_0 > \phi \} \coloneqq  \{x \in E: V_0(x) > \phi(x) \}$. We then set $S_1 = \{V_0 > \phi \} \cup \{ \phi = \psi \}$ and compute the next function
        \begin{align*}
           \forall x \in E, \quad V_1(x) \coloneqq  \sup_{\tau} R_x( \tau, \gamma_1), \qquad \text{where} \qquad \gamma_1 \coloneqq  \inf\{ t\geq0: X_t \in S_1 \}, 
        \end{align*}
    by adopting a "modified forward algorithm" (see Subsection \ref{Subsection for second case} for more details). Next, suppose we have obtained $V_k$ for some $k \geq 1$, we define the next set 
    \begin{align*} 
    S_{k+1} \coloneqq \Big( S_k \cap \{\Q[V_k] - \beta V_k \geq 0 \} \Big)\cup \{ \phi = \psi \},
    \end{align*}
and continue to compute the next function in the same manner
        \begin{align*}
           \forall x \in E, \  V_{k+1}(x) \coloneqq  \sup_{\tau} R_x( \tau, \gamma_{k+1}), \qquad \text{where} \qquad \gamma_{k+1} \coloneqq  \inf\{ t\geq0: X_t \in S_{k+1} \}.
        \end{align*}
    In this way, we obtain a decreasing sequence of functions $(V_k)_{k \geq 1}$ that converges pointwise to the value function $V$ after a finite number of iterations. The initial set $S_1$, its discovery, and the iterative shrinking of the sets $(S_k)_{k \geq 1}$ are at the core of this algorithm. They allow us to control the induced sequence $(V_k)_{k\geq 1}$ in such a way that $\psi \leq V_k \leq \phi$ for all $k \geq 1$, which aligns with our prior knowledge that $\psi \leq V \leq \phi$. We also remark that the set $S_1$ can be replaced by a slightly bigger set $\widetilde S_1 \coloneqq  \{ V_0 \geq \phi \}$ and the new induced sequence $(\widetilde V_k)_{k\geq 1}$ (defined in the same way as above) still decreases to $V$. For further details, see the discussions in Subsection \ref{subsec: aux}.
    
    Finally, we would like to emphasize that we only deal with finite state spaces because in practice we can effectively model and approximate real-world scenarios, such as those in finance, using  Markov processes taking values in finite (but possibly large) state spaces. This approach allows for more tractable analysis and computational implementation. Again, the main objective of this paper is to provide an algorithmic construction of the value function $V$ and the stopping regions (or the critical sets) $\{ V=\psi \}$ and $\{ V = \phi \}$. The structure of the paper is as follows. Section \ref{sec: settings} covers the settings, well-known results, notations and useful observations that are used frequently in later sections. This includes the basics of continuous-time Markov chains on finite state spaces, such as the Markov and strong Markov properties, Dynkin's martingale formula, as well as sufficient and necessary conditions for a pair of hitting times to be a NE. Section \ref{sec: algo} presents our algorithm in full detail. Subsection 3.1 offers a brief review of the forward algorithm by Irle \cite{Irle}, Miclo \& Villeneuve \cite{MS}, Subsection 3.2 provides the algorithms and proofs of convergence, and Subsection 3.3 discusses auxillary results and computational aspects. Section \ref{sec:examples} contains examples and applications for some well-known stochastic game in finance and economics. 

\section{Settings and frequently used results} \label{sec: settings}

Throughout this paper, $E$ is a finite state space, $X = (X_t)_{t\geq 0}$ is a continuous-time Markov chain defined on some probability space and takes values in $E$. We assume further that $X$ is time-homogeneous, has right-continuous sample paths and admits a Markov generator $\mathcal{Q}= (Q(x,y))_{x,y \in E}$. For $t\geq0$, let $\mathcal F_t \coloneqq  \sigma(X_s, 0\leq s \leq t)$ be its natural filtration and set $\mathcal F_\infty = \sigma(\cup_{t\geq 0} \mathcal F_t)$. Also, the probability space is assumed to be rich enough to facilitate a family of shift operators $(\theta_t)_{t\geq 0}$ satisfying $X_t \circ \theta_s = X_{t+s}$ for any $t,s \geq 0$, and on this probability space, there exists a family of probabilities $(\mathbb P_x)_{x \in E}$ such that $ \mathbb P_x(X_0 = x) = 1$. For $t \geq 0$, we define the transition matrix $\mathcal P_t = (\mathcal P_t(x,y))_{x,y \in E}$ by
    \begin{align*}
        \forall x,y \in E,  \qquad \mathcal P_t(x,y) \coloneqq \mathbb P_x(X_t=y),
    \end{align*}
then it is well-known that (see e.g. Norris \cite{Norris}) $t \mapsto \mathcal P_t$ satisfies the differential and integral equations (in matrix form)
    \begin{align*}
         \qquad \mathcal P_0 = I, \quad \forall t >0,\quad  \partial_t \mathcal P_t = \mathcal P_t\mathcal{Q}= \mathcal{Q}\mathcal P_t \iff \forall t \geq 0, \quad \mathcal P_t = I + \int^t_0 \mathcal P_s \mathcal{Q}ds.
    \end{align*}
Therefore, for any discounted rate $\beta > 0$, we differentiate in $t$ the quantity $\mathrm{e}^{-\beta t}\mathcal P_t$ and integrate back again, we get 
    \begin{align*} 
        \forall t\geq0, \quad \mathrm{e}^{-\beta t}\mathcal P_t = I + \int^t_0 \mathrm{e}^{-\beta s}(\Q - \beta I)\mathcal P_s ds.
    \end{align*}
As a consequence, by taking expectation under $\mathbb P_x$, we obtain the process $(M^x_t)_{t\geq 0}$, defined by
    \begin{align} \label{sec2: martingale}
        M^x_t \coloneqq  \mathrm{e}^{-\beta t} f(X_t) - f(x) - \int^t_0 \mathrm{e}^{-\beta s} \Big(\mathcal \mathcal{Q}[f](X_s) -\beta f(X_s)\Big) ds,
    \end{align}
is a bounded $((\mathcal F_t)_{t\geq 0}, \mathbb P_x)$-martingale for any $f \in \R^E$ and $x \in E$ (which is no longer true if $E$ is not finite).

A random time $\eta$, taking values in $[0, +\infty]$, is called a stopping time if $\{ \eta \leq t \} \in \mathcal F_t$  for all $t\geq 0$. The filtration associated with the stopping time $\eta$ is $\mathcal F_\eta \coloneqq  \{ A \in \mathcal F_\infty: A \cap \{\eta \leq t\} \in \mathcal F_t \}$. The strong Markov property then states that 
    \begin{align} \label{strongMarkov}
        \forall x \in E, \quad \mathbb E_x [Y \circ \theta_\eta \mathbf{1}_{ \{\eta < +\infty\} } | \mathcal F_\eta]  = E_{X_\eta}[Y]  \mathbf{1}_{\{\eta < +\infty \} }
    \end{align}
for any bounded, $\mathcal F_\infty$-measurable function $Y$ and stopping time $\eta$. By the optional sampling theorem, we have from \eqref{sec2: martingale} that for any stopping time $\eta$, 
    \begin{align*}
        \forall t\geq0, \ \forall x\in E , \quad \E_x[\mathrm{e}^{-\beta \eta \wedge t} f(X_{\eta\wedge t})] = f(x) + \E_x \left[ \int^{\eta \wedge t}_0 \mathrm{e}^{-\beta s} \Big(\mathcal \mathcal{Q}[f](X_s) -\beta f(X_s)\Big) ds \right].
    \end{align*}
Thus, by letting $t\rightarrow + \infty$, we get from bounded convergence theorem the so-called Dynkin's formula (DF)
    \begin{align} \label{intro: DF}
        \forall x \in E, \quad \E_x[\mathrm{e}^{-\beta \eta} f(X_\eta) \mathbf{1}_{ \{\eta < +\infty \} }] = f(x) + \E_x \left[ \int^{\eta}_0 \mathrm{e}^{-\beta s} \Big(\mathcal \mathcal{Q}[f](X_s) -\beta f(X_s)\Big) ds\right].
    \end{align}
If we agree to use the convention that  $\mathrm{e}^{-\eta} f(X_\eta) = 0$ on $ \{\eta = +\infty \} $, we can rewrite above equation as 
    \begin{align*}
        \quad \E_x[\mathrm{e}^{-\beta \eta} f(X_\eta)] = f(x) + \E_x \left[ \int^{\eta}_0 \mathrm{e}^{-\beta s} \Big(\mathcal \mathcal{Q}[f](X_s) -\beta f(X_s)\Big) ds \right].
    \end{align*}

Next, we introduce some frequently used notations. For any $A \subset E$, we denote 
    \begin{align*}
        h(A) \coloneqq  \inf \{ t\geq 0: \ X_t \in A \}
    \end{align*}
as the first hitting time of the fixed set $A$. This notation will be used repeatedly in this paper to lighten the burden of too much notations. For any two functions $f,f' \in \R^E$, we denote by $\{ f = f' \}$ the set $\{x \in E: f(x) = f'(x) \} $. Similarly, the sets $\{ f \leq f' \}$, $\{ f \geq f' \}$ and others along these lines are defined analogously. For example, $\{\Q[f] - \beta f \geq 0 \} $ is the set $\{ x \in E: \Q[f](x) - \beta f(x) \geq 0 \}$. We also write "$f \bullet f'$", where $\bullet \in \{ >, <, = \}$, to mean that $f(x) \bullet f'(x)$ for all $x \in E$ and if we write "$f \bullet f'$ on $G$", where $G$ is a subset of $E$, we mean that $f(x) \bullet f'(x)$ for all $x \in G$.

The following useful lemmas are direct consequences of the strong Markov property of $X$.
	\begin{lemma} \label{SMP}
		Let $B \subset C$ be two subsets of $E$, and let $\sigma$ denote the first jump time of $X$. Suppose that a function $g\in \R^E$ has the property 
		\begin{align} \label{func:SMP}
		\forall x \in E, \quad g(x) = \E_x[\mathrm{e}^{-\beta h(B)} g(X_{h(B)}) \mathbf{1}_{ \{ h(B) < + \infty\}}]. 
		\end{align}
		Then,
		\[ \forall x \notin C, \quad g(x) = \E_x[\mathrm{e}^{-\beta \sigma} g(X_{\sigma}) \mathbf{1}_{\{\sigma < +\infty \}}] =\E_x[\mathrm{e}^{-\beta h(C)} g(X_{h(C)}) \mathbf{1}_{ \{ h(C) < + \infty\}}].  \]
	\end{lemma}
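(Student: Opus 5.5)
The plan is to apply the strong Markov property \eqref{strongMarkov} twice: once at the first jump time $\sigma$, and once at the hitting time $h(C)$. The key observation is that for $x \notin C$ (hence $x\notin B$), we have $h(B) = \sigma + h(B)\circ\theta_\sigma$ and $h(B) = h(C) + h(B)\circ \theta_{h(C)}$ on the relevant events. This holds because the process must leave $x$ before entering $B$, and because $B\subset C$ forces $h(C)\le h(B)$. Since $X$ is right-continuous with piecewise constant paths, $X_t = x$ for $t<\sigma$, so $h(B)\geq \sigma$ and $h(C)\ge\sigma$ when $x\notin C$. Moreover $h(B)\ge h(C)$ always.

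\textbf{First equality.} Fix $x\notin C$ and write $Y \coloneqq \mathrm{e}^{-\beta h(B)} g(X_{h(B)})\mathbf 1_{\{h(B)<\infty\}}$, a bounded $\mathcal F_\infty$-measurable variable. On $\{\sigma<\infty\}$ we have $h(B) = \sigma + h(B)\circ\theta_\sigma$, so
\begin{align*}
\mathrm{e}^{-\beta h(B)} g(X_{h(B)})\mathbf 1_{\{h(B)<\infty\}} = \mathrm{e}^{-\beta\sigma}\,\big(Y\circ\theta_\sigma\big)\,\mathbf 1_{\{\sigma<\infty\}}.
\end{align*}
On $\{\sigma=\infty\}$ we get $h(B)=\infty$ since $x\notin B$, so both sides vanish. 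Taking $\E_x$, conditioning on $\mathcal F_\sigma$ (note $\mathrm{e}^{-\beta\sigma}$ is $\mathcal F_\sigma$-measurable), and applying \eqref{strongMarkov} yields
\begin{align*}
g(x) = \E_x\big[\mathrm{e}^{-\beta\sigma}\E_{X_\sigma}[Y]\mathbf 1_{\{\sigma<\infty\}}\big] = \E_x\big[\mathrm{e}^{-\beta\sigma} g(X_\sigma)\mathbf 1_{\{\sigma<\infty\}}\big],
\end{align*}
where the last step uses the hypothesis \eqref{func:SMP} at the point $X_\sigma$.

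\textbf{Second equality.} I repeat the argument with $\sigma$ replaced by $h(C)$. Since $B\subset C$, we have $h(C)\le h(B)$, and on $\{h(C)<\infty\}$ the identity $h(B) = h(C)+h(B)\circ\theta_{h(C)}$ holds. On $\{h(C)=\infty\}$, both $h(B)$ and the right side are infinite or vanish. Conditioning on $\mathcal F_{h(C)}$ and invoking \eqref{strongMarkov} together with \eqref{func:SMP} at $X_{h(C)}$ then gives $g(x)=\E_x[\mathrm{e}^{-\beta h(C)}g(X_{h(C)})\mathbf 1_{\{h(C)<\infty\}}]$. This second equality in fact holds for every $x\in E$; the restriction $x\notin C$ is needed only for the first equality.

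\textbf{Main obstacle.} The only delicate point is justifying the shift identities for hitting times: $h(B)\circ\theta_\eta = h(B)-\eta$ on $\{\eta\le h(B),\ \eta<\infty\}$ for $\eta\in\{\sigma,h(C)\}$. For $\eta=\sigma$ this relies on $X_s=x\notin B$ for $s<\sigma$. For $\eta=h(C)$ it relies on $X_s\notin C\supset B$ for $s<h(C)$. In both cases, right-continuity is used to handle the endpoint: if $X_{\eta}\in B$, both sides equal $0$ after the shift. The remaining care is measure-theoretic bookkeeping on the events $\{\eta=\infty\}$, which is trivial here because of the convention that these terms vanish.
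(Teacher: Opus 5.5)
Your proposal is correct and follows essentially the same approach as the paper. Both use the shift identities $h(B)=\sigma+h(B)\circ\theta_\sigma$ and $h(B)=h(C)+h(B)\circ\theta_{h(C)}$, condition on $\mathcal F_\sigma$ (resp.\ $\mathcal F_{h(C)}$), apply \eqref{strongMarkov}, and then use \eqref{func:SMP} at $X_\sigma$ (resp.\ $X_{h(C)}$); your explicit handling of $\{\sigma=\infty\}$ and $\{h(C)=\infty\}$, and your remark that the second identity holds for every $x\in E$, are correct details the paper leaves implicit.
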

	\begin{proof}
		Let $x \notin C$. Observe that 
			\begin{align*}
				h(B) = h(C) + h(B) \circ \theta_{h(C)} \quad \text{and}\quad  h(B)= \sigma+ h(B)\circ \theta_\sigma \quad \mathbb P_x-\text{a.s.},
			\end{align*}
		where for any two stopping times $\eta,\zeta$, the expression $\eta = \zeta + \eta \circ \theta_\zeta$ means 
			\[ \forall \omega \in \Omega, \quad \eta(\omega) = \zeta(\omega) + \eta( \theta_{\zeta(\omega)}(\omega)) =  \zeta(\omega) + \eta(\omega_{\zeta(\omega) + \cdot}). \]
		We have
			\begin{align*}
        		g(x) &= \E_x[\mathrm{e}^{-\beta h(B)} g(X_{h(B)}) \mathbf{1}_{\{h(B) < +\infty \}} ] \\ 
        &= \E_x \left[ \mathrm{e}^{-\beta \sigma } \mathbf{1}_{\{\sigma < +\infty \}} E_x \left[\mathrm{e}^{-\beta (h(B) \circ \theta_\sigma)} g(X_{h(B)} \circ \theta_\sigma) \mathbf{1}_{\{h(B) \circ \theta_\sigma < +\infty \}} | \mathcal F_\sigma \right] \right] \\ 
        &=  \E_x[ \mathrm{e}^{-\beta \sigma } \mathbf{1}_{\{\sigma < +\infty \}} E_{X_\sigma} [\mathrm{e}^{-\beta h(B)} g(X_{h(B)}) \mathbf{1}_{\{h(B) < +\infty \}} ] ]  \quad (\text{use \eqref{strongMarkov}}) \\
        &= \E_x[\mathrm{e}^{-\beta \sigma} g(X_{\sigma}) \mathbf{1}_{\{\sigma < +\infty \}}] \quad (\text{use \eqref{func:SMP}}) .
        			\end{align*}
	In the same manner, we get $g(x) =\E_x[\mathrm{e}^{-\beta h(C)} g(X_{h(C)}) \mathbf{1}_{ \{h(C) < + \infty \}}] $, which completes the proof.
	\end{proof}

\begin{lemma} \label{lem: Q-beta = 0}
    Let $B, C \subset E$ be disjoint. Define a function $g \in \R^E$ by 
    \begin{align*}
        \forall x \in E, \quad g(x) \coloneqq  R_x(h(B),h(C)).
    \end{align*}
Then for all $x \notin B \cup C$,  $\Q[g](x) - \beta g(x)= 0$, $g = \psi$ on $B$ and $g = \phi$ on $C$. Conversely, if there is a function $g^*$ satisfying the previous conditions, then $g^* = g$.
\end{lemma}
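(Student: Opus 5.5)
The plan has two parts: the forward direction uses the strong Markov property at the first jump time $\sigma$ to get the generator equation, and the converse uses Dynkin's formula \eqref{intro: DF} stopped at the hitting time of $B\cup C$.

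\textbf{Forward direction.} The boundary values are immediate. If $x\in B$ then $h(B)=0\le h(C)$, so $g(x)=\psi(x)$. If $x\in C$ then $h(C)=0<h(B)$, because $B\cap C=\emptyset$, so $g(x)=\phi(x)$.

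Now fix $x\notin B\cup C$ and set $D\coloneqq B\cup C$. Since $\mathrm{e}^{-\beta h(D)}\le 1$ and $\psi,\phi$ are bounded, we can write
\[ g(x)=\E_x\big[\mathrm{e}^{-\beta h(D)}\big(\psi(X_{h(D)})\mathbf 1_{X_{h(D)}\in B}+\phi(X_{h(D)})\mathbf 1_{X_{h(D)}\in C}\big)\big], \]
with the convention that the integrand is $0$ on $\{h(D)=\infty\}$. Hence $g(x)=\E_x[\mathrm{e}^{-\beta h(D)}g(X_{h(D)})\mathbf 1_{\{h(D)<\infty\}}]$ for every $x\in E$; on $D$ this holds trivially. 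So $g$ satisfies \eqref{func:SMP} with $B$ replaced by $D$.

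Apply Lemma \ref{SMP} with $B=C=D$. For $x\notin D$ it gives $g(x)=\E_x[\mathrm{e}^{-\beta\sigma}g(X_\sigma)\mathbf 1_{\sigma<\infty}]$. Let $q(x)=-Q(x,x)$.
\begin{itemize}
\item If $q(x)=0$, then $\sigma=\infty$ a.s., so $g(x)=0$. Also $\Q[g](x)=0$, and therefore $\Q[g](x)-\beta g(x)=0$.
\item If $q(x)>0$, then $\sigma$ is exponential with rate $q(x)$ and independent of $X_\sigma$, whose law is $Q(x,y)/q(x)$ for $y\ne x$. This gives $g(x)=\frac{q(x)}{\beta+q(x)}\sum_{y\ne x}\frac{Q(x,y)}{q(x)}g(y)$, which rearranges to $(\beta+q(x))g(x)=\sum_{y\neq x}Q(x,y)g(y)$, i.e. $\Q[g](x)-\beta g(x)=0$.
\end{itemize}
Alternatively, one can apply \eqref{intro: DF} with $\eta=\sigma$; this avoids computing the law of $(\sigma,X_\sigma)$ explicitly.

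\textbf{Converse.} Let $g^*$ satisfy the three conditions and set $\eta=h(D)$. Apply \eqref{intro: DF} to $f=g^*$. On $[0,\eta)$ the chain stays outside $D$, where $\Q[g^*]-\beta g^*=0$, so the integral term vanishes. Hence $g^*(x)=\E_x[\mathrm{e}^{-\beta\eta}g^*(X_\eta)\mathbf 1_{\eta<\infty}]$. Since $g^*$ equals $\psi$ on $B$ and $\phi$ on $C$, the right-hand side is exactly the expression for $g(x)$ above, so $g^*=g$.

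\textbf{Main obstacle.} The only delicate point is the bookkeeping in the forward step. We must check that $R_x(h(B),h(C))$ coincides with the single hitting-time expression of $g$ at $h(D)$, including the tie-breaking $\mathbf 1_{\tau\le\gamma}$, which is harmless because $B\cap C=\emptyset$ makes ties impossible when $h(D)<\infty$. We must also handle absorbing states, where $q(x)=0$, and the event $\{h(D)=\infty\}$ through the stated convention.
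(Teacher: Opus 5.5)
Your proposal is correct and follows the paper's proof essentially step for step: you rewrite $g$ as a single hitting-time functional at $h(B\cup C)$, reduce to the first jump time $\sigma$ via Lemma~\ref{SMP}, and for the converse apply Dynkin's formula \eqref{intro: DF} stopped at $h(B\cup C)$. The only cosmetic difference is that you read off $\Q[g](x)-\beta g(x)=0$ from the explicit law of $(\sigma,X_\sigma)$ and treat absorbing states separately, whereas the paper applies Dynkin's formula at $\sigma$ and uses $\E_x[\mathrm{e}^{-\beta\sigma}]<1$, which covers both cases at once; this is the alternative you mention yourself.
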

\begin{proof}
    The fact that $g = \psi$ on $B$ and $g = \phi$ on $C$ are straightforward. From this, we can express $g$ as
        \begin{align*} 
            \forall x \in E, \quad g(x) = \E_x[\mathrm{e}^{-\beta \eta'}g(X_{\eta'})] = \E_{x} [\mathrm{e}^{-\beta \eta'} g(X_{\eta'}) \mathbf{1}_{\{\eta' < +\infty \}} ],
        \end{align*}
    where $\eta'\coloneqq   h(B) \wedge h(C) = h(B\cup C)$. For any $ x \notin B \cup C$, applying Lemma \ref{SMP} yields
    \begin{align*}
        g(x) &= \E_x[\mathrm{e}^{-\beta \sigma} g(X_{\sigma}) \mathbf{1}_{\{\sigma < +\infty \}}] \\
        &= g(x) +  \E_x \left[ \int^{\sigma}_0 \mathrm{e}^{-\beta s} \Big(\mathcal \Q[g](X_s) -\beta g(X_s)\Big) ds \right] \\
        &= g(x) + \Big( \Q[g](x) - \beta g(x)\Big) \E_x \left[ \frac{1 - \mathrm{e}^{-\beta \sigma}}{\beta} \right],
    \end{align*}
which implies the claim because $E_x[\mathrm{e}^{-\beta \sigma}] < 1$ (recall that $\sigma$ is exponentially distributed with rate $|Q(x,x)|$ so $\sigma >0$ $\mathbb P_x$-a.s.).

For the converse, we use Dynkin's formula \eqref{intro: DF} to get
    \begin{align*}
        \forall x \in E, \quad g(x) &= \E_x[\mathrm{e}^{-\beta h(B)}\psi(X_{h(B)}) \mathbf{1}_{ \{h(B) \leq h(C) \}} + \mathrm{e}^{-\beta h(C)}\phi(X_{h(C)}) \mathbf{1}_{ \{h(B) >h(C) \}} ] \\ 
        &= \E_x[\mathrm{e}^{-\beta h(B)}g^*(X_{h(B)}) \mathbf{1}_{ \{h(B) \leq h(C)\}} + \mathrm{e}^{-\beta h(C)}g^*(X_{h(C)}) \mathbf{1}_{ \{h(B) >h(C) \}} ] \\
        &= g^*(x) + \E_x \left[ \int^{h(B)\wedge h(C)}_0 \mathrm{e}^{-\beta s} \Big(\mathcal \Q[g^*](X_s) -\beta g^*(X_s)\Big) ds \right] \\
        &= g^*(x)
    \end{align*}
where the last equality follows since $\Q[g^*] -\beta g^* = 0$ outside $B\cup C$. This concludes the proof.
\end{proof}
\begin{rem} \label{rem:compute}
    Lemma \ref{lem: Q-beta = 0} provides us a way to compute the function $g(x) = R_x(h(B), h(C))$ numerically by solving the associated linear system (e.g., using standard linear system solvers such as LU decomposition or iterative methods). To see this, we define a new generator $\Q' = (Q'(x,y))_{x,y \in E}$ by 
        \begin{align*}
            \forall x \in B \cup C, \ \forall y \in E, \qquad Q'(x,y) \coloneqq  0, \qquad \forall x \notin B \cup C, \ \forall y \in E, \qquad Q'(x,y) \coloneqq  Q(x,y).
        \end{align*}
    That is, we set all rows of $\Q$ corresponding to states in $B \cup C$ equal to 0, while leaving the remaining rows unchanged. One can then verify that $g$ satisfies the linear system $(\Q' -\beta I) g = v$, where $v$ is the vector satisfying $v(x) = -\beta \phi(x)$ for $x \in C$, $v(x) = -\beta \psi (x)$ for $x \in B$ and $v(x) = 0$ for $x \notin B \cup C$. Since $ \mathcal L - \beta I$ is always invertible for every $\beta >0$ and any Markov generator $\mathcal L$, the unique solution  $g = (\Q'-\beta I)^{-1} v$ can be computed numerically by solving this linear system. To see why $ \mathcal L - \beta I$ is invertible, consider the stochastic matrix 
    	\[ P = \frac{1}{L}\mathcal L+ I, \quad \text{where} \quad L = \max_{x \in E} | \mathcal L (x,x) | > 0.\]
    Now, suppose that $\mathcal L[f] - \beta f =0 $ for some non-zero function $f$, then
    	\[ P[f] = (\frac{\beta}{L} + 1)f,\]
	implying that $P$ has an eigenvalue strictly greater than 1, which is a contradiction.
\end{rem}

The following property of the expected payoff $R_x(\cdot,\cdot)$ is also useful.
    \begin{lemma} \label{lem: critical obs}
         Let $A, B \subset E$ be two disjoint sets and satisfies $A \cap \{ \phi = \psi \} = B \cap \{ \phi = \psi \} = \emptyset $. Then it holds that
            \begin{align*}
                \forall x \in E, \qquad R_x \Big(h(A \cup \{ \phi = \psi \}), h(B \cup \{ \phi = \psi \} )\Big)= R_x \Big(h(A), h(B \cup \{ \phi = \psi \})\Big).
            \end{align*}
    \end{lemma}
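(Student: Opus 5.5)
Write $D \coloneqq \{\phi = \psi\}$, $\tau_1 \coloneqq h(A\cup D)$, $\tau_2 \coloneqq h(A)$ and $\gamma \coloneqq h(B \cup D)$. The plan is to compare the integrands in \eqref{expected payoff} pathwise, for every $\omega$, and show that they coincide. Taking $\mathbb E_x$ then gives the claim. Note that $\tau_1 = \tau_2 \wedge h(D)$ and $\gamma = h(B)\wedge h(D)$. Since $D \subset B\cup D$, we have $\gamma \le h(D)$ everywhere. Therefore $\tau_1 \wedge \gamma = \tau_2\wedge \gamma$: both equal $\tau_2 \wedge h(B)\wedge h(D)$. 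Let $\eta$ denote this common value. Both payoffs are then of the form $\mathrm{e}^{-\beta\eta} f(X_\eta)$, with $f=\psi$ or $\phi$ depending on which player stopped first. It remains to check that the choice of $f$ agrees, or that it does not matter.

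I would split into cases on the path. \emph{Case 1:} $\tau_2 \le \gamma$. Then $\tau_1 \le \tau_2 \le \gamma$, so both payoffs use $\psi$. We also need $\tau_1 = \tau_2$ here. If $\tau_1 < \tau_2$, then $\tau_1 = h(D) < \tau_2 \le \gamma \le h(D)$, which is a contradiction. So both integrands equal $\mathrm{e}^{-\beta \tau_2}\psi(X_{\tau_2})$. This includes the case $\tau_2=\gamma=+\infty$, where both integrands vanish by the convention. \emph{Case 2:} $\tau_2 > \gamma$. The second payoff is $\mathrm{e}^{-\beta\gamma}\phi(X_\gamma)$. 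For the first payoff, either $\tau_1 > \gamma$ and it is the same $\phi$-term, or $\tau_1 \le \gamma < \tau_2$. In the latter sub-case $\tau_1 = h(D) \le \gamma \le h(D)$, hence $\tau_1=\gamma=h(D)<\infty$ and the first payoff is $\mathrm{e}^{-\beta\gamma}\psi(X_\gamma)$. Here I use right-continuity of paths and finiteness of $E$: the hitting time of a set is attained, so $X_{h(D)} \in D$ on $\{h(D)<\infty\}$. Hence $\psi(X_\gamma)=\phi(X_\gamma)$, and the two payoffs agree.

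The only delicate point is this last sub-case: the tie $\tau_1=\gamma$ where the sup-player is credited by the rule $\mathbf 1_{\tau\le\gamma}$. It is resolved by the fact that the hitting position lies in $D$, where $\psi=\phi$. The disjointness hypotheses on $A$, $B$ and $D$ are not really needed for the pathwise identity. They only ensure the setting is consistent with Lemma \ref{lem: Q-beta = 0}. I would mention this in passing.

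Finally, bounded convergence is not needed, because $\psi,\phi$ are bounded on the finite set $E$. Taking expectations of the equal integrands under $\mathbb P_x$ yields the stated identity for every $x\in E$.
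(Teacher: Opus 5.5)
Your proof is correct and follows essentially the same route as the paper: your Case 1 is the paper's event $\Omega_1=\{h(A)\le h(B\cup\{\phi=\psi\})\}$, and your tie sub-case of Case 2 is its event $\Omega_2$, on which the paper likewise uses $\psi(X_{h(\{\phi=\psi\})})=\phi(X_{h(\{\phi=\psi\})})$. Your explicit justification that the hitting position lies in $\{\phi=\psi\}$ is a point the paper leaves implicit, and your remark that the disjointness hypotheses are never used in the argument is accurate.
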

    \begin{proof}
        Let $C: = \{ \phi = \psi\}$. We first observe that 
            \begin{align*}
                \Big\{ h(A \cup C) \leq h(B \cup C ) \Big\} &= \Big\{ h(A) \wedge h(C) \leq h(B \cup C ) \Big\} \\
                &= \Big\{ h(A) \leq h(B \cup C ) \Big\} \cup \Big\{ h(C) \leq h(B \cup C ) \Big\} \\
                &= \Big\{ h(A) \leq h(B \cup C ) \Big\} \cup \Big\{ h(C) = h(B \cup C ) \Big\} \\ 
                &= \underbrace{\Big\{ h(A) \leq h(B \cup C ) \Big\}}_{\coloneqq  \Omega_1} \cup \underbrace{\Big(\Big\{ h(C) = h(B \cup C ) \Big\} \cap  \Big\{h(A) > h(B \cup C ) \Big\} \Big)}_{\coloneqq  \Omega_2}.
            \end{align*}
    For all $x \in E$,
        \begin{align*}
           R_x \Big(h(A \cup C) , h(B \cup C )\Big)&= \E_x \left[ \mathrm{e}^{-\beta h(A \cup C)} \psi (X_{h(A \cup C)}) \mathbf{1}_{\Omega_1 \cup \Omega_2}  + \mathrm{e}^{-\beta h(B \cup C)} \phi (X_{h(B \cup C)}) \mathbf{1}_{\Omega_1^c \cap  \Omega_2^c} \right] \\
           &= \E_x \Big[ \mathrm{e}^{-\beta h(A)} \psi (X_{h(A)}) \mathbf{1}_{\Omega_1} + \mathrm{e}^{-\beta h(C)} \psi (X_{h(C)}) \mathbf{1}_{ \Omega_2}  \\
           & \hspace{5cm} + \mathrm{e}^{-\beta h(B \cup C)} \phi (X_{h(B \cup C)}) \mathbf{1}_{\Omega_1^c \cap  \Omega_2^c} \Big] \\
           &= \E_x\ Big[ \mathrm{e}^{-\beta h(A)} \psi (X_{h(A)}) \mathbf{1}_{\Omega_1} + \mathrm{e}^{-\beta h(C)} \phi (X_{h(C)}) \mathbf{1}_{ \Omega_2} \\
           & \hspace{5cm}  + \mathrm{e}^{-\beta h(B \cup C)} \phi (X_{h(B \cup C)}) \mathbf{1}_{\Omega_1^c \cap  \Omega_2^c} \Big] \\
           &= \E_x \Big[ \mathrm{e}^{-\beta h(A)} \psi (X_{h(A)}) \mathbf{1}_{\Omega_1} + \mathrm{e}^{-\beta h(B\cup C)} \phi (X_{h(B\cup C)}) \mathbf{1}_{ \Omega_2}  \\
           & \hspace{5cm}  + \mathrm{e}^{-\beta h(B \cup C)} \phi (X_{h(B \cup C)}) \mathbf{1}_{\Omega_1^c \cap  \Omega_2^c} \Big] \\
           &= \E_x \left[ \mathrm{e}^{-\beta h(A)} \psi (X_{h(A)}) \mathbf{1}_{\Omega_1}  + \mathrm{e}^{-\beta h(B \cup C)} \phi (X_{h(B \cup C)}) \mathbf{1}_{\Omega_1^c} \right] \\
           &= R_x \Big(h(A) , h(B \cup C )\Big),
        \end{align*}
    where we have used $ \psi (X_{h(C)}) = \phi (X_{h(C)}) $ on $\Omega_2$ for the third equality and $\Omega_2 \subset \Omega_1^c$ to obtain $(\Omega_1^c \cap \Omega_2^c) \cup \Omega_2 = \Omega_1^c$ in the fifth equality.
    \end{proof}

    The following observation provides a sufficient and necessary condition to check if a pair $(\tau,\gamma)$ is a NE. It is a standard verification result but we give a proof for the sake of completeness. The proof of sufficiency is provided here while the necessity part is obtained in Corollary \ref{cor: necessity part} as a consequence of the Theorems \ref{theo: big seq} and \ref{theo: main}.
        \begin{theorem} \label{theo: sufficient for NE}
            Suppose there are two disjoint sets $A,B$, each is disjoint from $\{ \phi = \psi \}$, and a function $\widetilde V$ satisfying
            \begin{align} \label{conditions for a NE}
                \begin{cases}
                    \forall x \in A, \quad &\Q[\widetilde V](x) -\beta \widetilde V(x) \leq 0 \ \text{and} \ \widetilde V(x) = \psi(x) \\
                    \forall x \in B, \quad &\Q[\widetilde V](x) -\beta \widetilde V(x) \geq 0 \ \text{and} \ \widetilde V(x) = \phi(x) \\
                    \forall x \notin A\cup B \cup \{\phi = \psi\}, \quad &\Q[\widetilde V](x) -\beta \widetilde V(x) = 0 \\
                    \forall x \in E, \quad &\psi(x) \leq \widetilde V(x) \leq \phi(x)
                \end{cases}
            \end{align}
            then $\Big(h \big(A \cup \{ \phi =\psi\}\big), h\big(B \cup \{ \phi = \psi \}\big) \Big)$ forms a NE and $\widetilde V = V$, where $V$ is the value function.
        \end{theorem}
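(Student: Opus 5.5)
Set $C \coloneqq \{\phi=\psi\}$, $\tau^* \coloneqq h(A\cup C)$ and $\gamma^* \coloneqq h(B\cup C)$. The plan has three steps: identify $\widetilde V$ with the payoff of $(\tau^*,\gamma^*)$, then check each half of the saddle inequality \eqref{intro: NE criteria} with Dynkin's formula \eqref{intro: DF}.

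\emph{Step 1: $\widetilde V(x)=R_x(\tau^*,\gamma^*)$.} By Lemma \ref{lem: critical obs}, $R_x(\tau^*,\gamma^*) = R_x(h(A),h(B\cup C))$. The sets $A$ and $B\cup C$ are disjoint. Off $A\cup B\cup C$ we have $\Q[\widetilde V]-\beta\widetilde V=0$, while $\widetilde V=\psi$ on $A$ and $\widetilde V=\phi$ on $B\cup C$ (on $C$ this follows from $\psi\le\widetilde V\le\phi$). The converse part of Lemma \ref{lem: Q-beta = 0} then gives $\widetilde V = R_\cdot(h(A),h(B\cup C))$.

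\emph{Step 2: $R_x(\tau,\gamma^*)\le \widetilde V(x)$ for every stopping time $\tau$.} Put $\eta=\tau\wedge\gamma^*$. On $\{\tau\le\gamma^*\}$ the payoff is $\mathrm e^{-\beta\tau}\psi(X_\tau)\le \mathrm e^{-\beta\eta}\widetilde V(X_\eta)$. On $\{\tau>\gamma^*\}$ we have $\gamma^*<\infty$, and since $X_{\gamma^*}\in B\cup C$ by right-continuity on a finite state space, the payoff is $\mathrm e^{-\beta\gamma^*}\phi(X_{\gamma^*}) = \mathrm e^{-\beta\eta}\widetilde V(X_\eta)$. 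Hence
\[ R_x(\tau,\gamma^*)\le \E_x[\mathrm e^{-\beta\eta}\widetilde V(X_\eta)] = \widetilde V(x)+\E_x\Big[\int_0^\eta \mathrm e^{-\beta s}\big(\Q[\widetilde V]-\beta\widetilde V\big)(X_s)\,ds\Big]. \]
For $s<\gamma^*$ we have $X_s\notin B\cup C$, so $X_s\in A$ or $X_s\notin A\cup B\cup C$. In either case the integrand is $\le 0$ by \eqref{conditions for a NE}, giving $R_x(\tau,\gamma^*)\le\widetilde V(x)$.

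\emph{Step 3: $R_x(\tau^*,\gamma)\ge\widetilde V(x)$ for every $\gamma$.} This is symmetric with $\eta=\tau^*\wedge\gamma$. On $\{\tau^*\le\gamma\}$ the payoff is $\psi(X_{\tau^*})=\widetilde V(X_{\tau^*})$, since $X_{\tau^*}\in A\cup C$ where $\widetilde V=\psi$. On $\{\tau^*>\gamma\}$ it is $\phi(X_\gamma)\ge\widetilde V(X_\gamma)$. For $s<\tau^*$ we have $X_s\notin A\cup C$, so the integrand is $\ge0$. Dynkin's formula then gives $R_x(\tau^*,\gamma)\ge\widetilde V(x)$.

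Combining the steps, $(\tau^*,\gamma^*)$ is a NE, so $V(x)=R_x(\tau^*,\gamma^*)=\widetilde V(x)$. The main point of care is the boundary bookkeeping. The generator sign is unknown on $C$, but the interval $[0,\eta)$ never visits $C$, and on the boundary events the convention $\mathrm e^{-\beta\eta}f(X_\eta)=0$ on $\{\eta=\infty\}$ makes both sides consistent. Equality $\widetilde V=\psi=\phi$ on $C$ is exactly what makes the payoff on $\{X_\eta\in C\}$ match $\widetilde V$ in either direction.
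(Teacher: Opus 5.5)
Your proposal is correct and follows essentially the same route as the paper. Like the paper, you first identify $\widetilde V$ with $R_x(\tau^*,\gamma^*)$ via the converse of Lemma~\ref{lem: Q-beta = 0} applied to $A$ and $B\cup\{\phi=\psi\}$, combined with Lemma~\ref{lem: critical obs}, and then get each saddle-point inequality from Dynkin's formula using the sign of $\Q[\widetilde V]-\beta\widetilde V$ before $\gamma^*$ (resp.\ $\tau^*$). You also make explicit that $\widetilde V=\psi=\phi$ on $\{\phi=\psi\}$, which the paper uses only implicitly.
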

        \begin{proof}
            Denote $(\tau_A,\tau_B) \coloneqq \Big(h \big(A \cup \{ \phi =\psi\}\big), h\big(B \cup \{ \phi = \psi \}\big) \Big)$. Applying Lemma \ref{lem: Q-beta = 0} for the disjoint sets $A$ and $B \cup \{ \phi = \psi \}$, we get 
            \begin{align*}
            	\widetilde V(x) &= R_x \Big(h(A) , h(B \cup  \{ \phi = \psi \} )\Big)  = R_x(\tau_A,\tau_B),
            \end{align*}
            where the last equality follows from Lemma \ref{lem: critical obs}. For all stopping times $\tau$ and $x \in E$, we have
                \begin{align*}
                    R_x(\tau, \tau_B) &= \E_x[\mathrm{e}^{-\beta \tau}\psi(X_\tau) \mathbf{1}_{\{\tau \leq \tau_B\}} + \mathrm{e}^{-\beta \tau_B}\phi(X_{\tau_B})
                    \mathbf{1}_{ \{\tau >\tau_B \} } ] \\
                    &= \E_x[\mathrm{e}^{-\beta \tau}\psi(X_\tau) \mathbf{1}_{\{\tau \leq \tau_B\}} + \mathrm{e}^{-\beta \tau_B}\widetilde V(X_{\tau_B})
                    \mathbf{1}_{ \{\tau >\tau_B \} } ] \quad (\text{since} \ \widetilde V=\phi \ \text{on} \ B) \\
                    &\leq \E_x[\mathrm{e}^{-\beta \tau}\widetilde V(X_\tau) \mathbf{1}_{\{\tau \leq \tau_B\}} + \mathrm{e}^{-\beta \tau_B}\widetilde V(X_{\tau_B})
                    \mathbf{1}_{ \{\tau >\tau_B \} } ] \quad (\text{since} \ \psi \leq \widetilde V) \\
                    &=\E_x[\mathrm{e}^{-\beta (\tau\wedge\tau_B)}\widetilde V(X_{\tau\wedge \tau_B})] \\
                    &= \widetilde V(x) + \E_x\left[ \int^{\tau\wedge \tau_B}_0 \mathrm{e}^{-\beta s} \Big(  \Q[\widetilde V](X_s) -\beta \widetilde V(X_s)\Big) ds \right] \quad (\text{DF}) \\
                    & \leq \widetilde V(x) = R_x(\tau_A,\tau_B)
                \end{align*}
            because $\Q[\widetilde V] -\beta \widetilde V \leq 0$ outside $B \cup \{\phi = \psi\}$. Thus, we have 
            	\begin{align*}
            		\widetilde V(x) = R_x(\tau_A,\tau_B) = \sup_\tau R_x(\tau,\tau_B),
            	\end{align*}
            and similarly, 
            	\begin{align*}
            		\widetilde V(x) = R_x(\tau_A,\tau_B) = \inf_\gamma R_x(\tau_A,\gamma).
            	\end{align*}
	Hence, the pair $(\tau_A,\tau_B)$ is indeed a NE (cf. \eqref{intro: NE criteria}) and therefore $\widetilde V=V$.
        \end{proof}

\section{The algorithm} \label{sec: algo}
    \subsection{Forward algorithm for one-player game} \label{subsec: forward for one player}
    We recall that a function $f \in \R^E$ is called $\beta$-excessive (with respect to $\Q$) if it satisfies
        \begin{align*}
            \forall x \in E, \qquad \Q[f](x) - \beta f(x) \leq 0.
        \end{align*}
    It is well known the the function
    \begin{align} \label{func: V0}
        \forall x \in E, \quad V_0(x) \coloneqq  \sup_{\tau} \E_x[\mathrm{e}^{-\beta \tau} \psi(X_{\tau})] 
    \end{align}
    is the smallest $\beta$-excessive function dominating $\psi$, which clearly coincides with $\sup_{\tau} R_x(\tau, + \infty)$. It can be computed using the following "forward algorithm", which we adapt from Miclo \& Villeneuve \cite{MS}. Firstly, define
        \begin{align*} 
            C_1 = \{ \Q[\psi] - \beta \psi \leq 0 \}, \qquad \eta_1\coloneqq  h(C_1), \qquad V^{(1)}_0(x) \coloneqq  \E_x[\mathrm{e}^{-\beta \eta_1} \psi (X_{\eta_1})].
        \end{align*}
    Suppose we have defined the triple $(C_n, \eta_n, V^{(n)}_0)$ for some $n \geq 1$, we define the next triple $(C_{n+1}, \eta_{n+1}, V^{(n+1)}_0)$ by 
        \begin{align} \label{V^n_0}
            C_{n+1} = C_n \cap \{ \Q[V^{(n)}_0] - \beta V^{(n)}_0 \leq 0 \}, \qquad \eta_{n+1}\coloneqq  h(C_{n+1}), 
        \end{align}
       and 
        \begin{align} \label{V^n_0 def}
        	\forall x \in E, \quad V^{(n+1)}_0(x) \coloneqq  \E_x[\mathrm{e}^{-\beta \eta_{n+1}} \psi (X_{\eta_{n+1}})].
        \end{align}
    Then we obtain the following result, whose proof can be found in Miclo \& Villeneuve \cite{MS}.
    \begin{theorem} \label{theo: recall}
        The sequence $(V^{(n)}_0)_{n \geq 1}$ is increasing and converges pointwise to $V_0$ after a finite number of steps and $V_0$ is the smallest $\beta$-excessive function dominating $\psi$. Moreover, we have $\{ V_0 = \psi \} = \bigcap_{n \geq 1} C_n$  and that 
    \begin{enumerate}
        \item For all $x \in \{ V_0 = \psi \}$, $\Q[V_0](x) - \beta V_0(x) \leq 0$.
        \item For all $x \notin  \{ V_0 = \psi \}$,  $\Q[V_0](x) - \beta V_0(x) =0$.
        \item The stopping time $\eta^* = h(\{ V_0 = \psi\})$ is optimal, i.e. $\forall x \in E$, $V_0(x) = \mathbb E_x[\mathrm{e}^{-\beta \eta^*} \psi(X_{\eta^*})]$.
    \end{enumerate}
    \end{theorem}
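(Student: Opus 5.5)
We prove Theorem \ref{theo: recall} in four steps: monotonicity of the sets $C_n$, monotonicity of the functions $V^{(n)}_0$, termination, and identification of the limit via the verification argument of Theorem \ref{theo: sufficient for NE} with the inf-player removed.

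\textbf{Step 1: the sets shrink.} By construction $C_{n+1}\subset C_n$. Since $E$ is finite, the sequence $(C_n)$ is eventually constant, say $C_{N+1}=C_N$ with $N\le |E|+1$.

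\textbf{Step 2: the functions increase, and a structural description of each $V^{(n)}_0$.} Lemma \ref{lem: Q-beta = 0} applied with $B=C_n$ and $C=\emptyset$ (formally, with no inf-player) gives
\[
V^{(n)}_0=\psi \text{ on } C_n, \qquad \Q[V^{(n)}_0]-\beta V^{(n)}_0=0 \text{ off } C_n .
\]
For monotonicity we use Dynkin's formula. Write $\eta_{n+1}=h(C_{n+1})$ and note $C_{n+1}\subset C_n$. Then
\[
V^{(n+1)}_0(x)=\E_x[\mathrm e^{-\beta\eta_{n+1}}V^{(n)}_0(X_{\eta_{n+1}})] = V^{(n)}_0(x)+\E_x\Big[\int_0^{\eta_{n+1}}\mathrm e^{-\beta s}(\Q-\beta)[V^{(n)}_0](X_s)\,ds\Big],
\]
where the first equality holds because $V^{(n)}_0=\psi$ on $C_{n+1}\subset C_n$. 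Before time $\eta_{n+1}$ the process lies outside $C_{n+1}$. There it is either outside $C_n$, where the integrand vanishes, or in $C_n\setminus C_{n+1}$, where the integrand is $>0$ by the definition of $C_{n+1}$. Hence $V^{(n+1)}_0\ge V^{(n)}_0$. The base comparison $V^{(1)}_0\ge\psi$ follows the same way, since the integrand $(\Q-\beta)[\psi]$ is positive off $C_1$.

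\textbf{Step 3: identifying the limit.} Once $C_{N+1}=C_N$, the function $W:=V^{(N)}_0$ satisfies $(\Q-\beta)W\le 0$ on $C_N$, and $(\Q-\beta)W=0$ off $C_N$. So $W$ is $\beta$-excessive. We also need $W\ge\psi$: Step 2 gives $W\ge V^{(1)}_0\ge\psi$, and $W=\psi$ on $C_N$.

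Then $W\ge V_0$: for any $\tau$, Dynkin's formula and excessivity give $\E_x[\mathrm e^{-\beta\tau}\psi(X_\tau)]\le \E_x[\mathrm e^{-\beta\tau}W(X_\tau)]\le W(x)$. Conversely $W$ is attained by the stopping time $\eta_N$, so $W\le V_0$. Hence $W=V_0$.

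The same argument shows that any $\beta$-excessive $f\ge\psi$ satisfies $f\ge V_0$, so $V_0$ is the smallest such function.

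\textbf{Step 4: the critical set and items 1--3.} This is the main obstacle: showing $\{V_0=\psi\}=\bigcap_n C_n=C_N$. The inclusion $C_N\subset\{V_0=\psi\}$ is immediate from Step 2. For the reverse inclusion, take $x\notin C_N$ with $V_0(x)=\psi(x)$ and let $m$ be the first index with $x\notin C_m$.

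If $m=1$, then $(\Q-\beta)\psi(x)>0$. Since $V_0\ge\psi$ with equality at $x$, and the off-diagonal entries $Q(x,y)$ are nonnegative, we get
\[
(\Q-\beta)V_0(x)\ge(\Q-\beta)\psi(x)>0,
\]
contradicting excessivity.

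If $m>1$, then $x\in C_{m-1}$, so $V^{(m-1)}_0(x)=\psi(x)=V_0(x)$, while $V^{(m-1)}_0\le V_0$ everywhere. The same off-diagonal comparison gives
\[
(\Q-\beta)V_0(x)\ge(\Q-\beta)V^{(m-1)}_0(x)>0,
\]
where the strict inequality holds because $x\notin C_m$. This again contradicts excessivity.

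With the critical set identified, items 1 and 2 are exactly the properties of $W=V_0$ from Step 3, and item 3 is the attainment of $V_0$ by $\eta_N=h(\{V_0=\psi\})$.
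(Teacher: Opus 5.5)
Your proof is correct. The paper does not prove this theorem itself; it defers to Miclo \& Villeneuve \cite{MS}. Your argument is the standard forward-algorithm proof and follows the same scheme the paper uses for its game version in Proposition \ref{prop: Vn} and Theorem \ref{theo: V1 properties}: Dynkin's formula for monotonicity, then verification that the stabilized iterate is the value.

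The only variation is in Step 4. There you identify $\{V_0=\psi\}$ by the local off-diagonal comparison against the already-excessive limit, which is the device of Lemma \ref{V1V1}. The paper's analogue, Proposition \ref{prop: Vn}(iii), instead propagates the strict inequality $V^{(n)}>\psi$ off the stopping set forward through the iterations. Both work; yours is shorter once excessivity of $V_0$ has been established in Step 3.
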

    
    A consequence of this approach, which was not given in \cite{MS}, is that
        \begin{theorem}\label{ends after finite time}
            We have $\forall x \in E$, $\mathbb P_x( h( \{ V_0  = \psi\}) < + \infty) = 1$.
        \end{theorem}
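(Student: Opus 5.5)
The plan is to argue by contradiction, using the structure of a finite-state chain together with the harmonicity of $V_0$ off $D \coloneqq \{V_0 = \psi\}$ given by Theorem \ref{theo: recall}(2). The sign assumption $\psi \geq 0$ will be essential.

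\emph{Step 1 (reduction to a closed class avoiding $D$).} Suppose that $\mathbb P_x(h(D) = +\infty) > 0$ for some $x \in E$. Since $E$ is finite, $\mathbb P_x$-a.s. the chain eventually enters some closed communicating class $K$ and then visits every state of $K$ (infinitely often if $|K|>1$, or is absorbed if $K$ is a singleton). There are finitely many such classes, so some closed class $K$ is entered with positive probability on the event $\{h(D)=+\infty\}$. On that event every state of $K$ is visited, hence $K \cap D = \emptyset$. This is the only probabilistic input. It is standard for finite chains, but I expect it to be the step needing the most care in the write-up, for instance via the decomposition of the embedded jump chain into transient states and closed recurrent classes, as in Norris \cite{Norris}.

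\emph{Step 2 (maximum principle on $K$).} Because $K \subset D^c$, Theorem \ref{theo: recall}(2) gives $\Q[V_0](y) = \beta V_0(y)$ for all $y \in K$. Because $K$ is closed, $Q(y,z) = 0$ for $y \in K$ and $z \notin K$. Pick $y^* \in K$ maximizing $V_0$ over $K$. Then
\[
\beta V_0(y^*) = \Q[V_0](y^*) = \sum_{z \in K,\, z \neq y^*} Q(y^*,z)\big(V_0(z) - V_0(y^*)\big) \leq 0 ,
\]
where the middle equality uses $\sum_z Q(y^*,z)=0$. Since $\beta>0$, this gives $V_0 \leq 0$ on $K$. (If $K=\{y^*\}$ is absorbing, the sum is empty and we get $V_0(y^*)=0$ directly.)

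\emph{Step 3 (contradiction).} On $K$ we also have $V_0 \geq \psi \geq 0$, so $V_0 = 0 = \psi$ on $K$, i.e. $K \subset D$. This contradicts $K \cap D = \emptyset$ with $K \neq \emptyset$. Hence $\mathbb P_x(h(D) < +\infty) = 1$ for every $x \in E$.
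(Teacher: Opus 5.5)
Your proof is correct, but it takes a different route from the paper's. The paper argues directly and never uses the harmonicity statement (2) of Theorem \ref{theo: recall}. It fixes the explicit set $W=\bigcup_i W_i$, with $W_i=\argmax_{F_i}\psi$ over the recurrent classes $F_i$, and shows by induction along the forward algorithm that $W\subset C_n$ for all $n$. The inductive step runs as follows: if $x\in W_i\subset C_n$, then $V^{(n)}_0(x)=\psi(x)$. This equals $\max_{F_i}V^{(n)}_0$, since started inside the closed class $F_i$, and with $\psi\ge 0$, no stopping rule earns more than $\max_{F_i}\psi$. Hence $\Q[V^{(n)}_0](x)-\beta V^{(n)}_0(x)\le 0$. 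The paper then concludes from $\{V_0=\psi\}=\bigcap_n C_n$.

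You use the same mechanism: at a maximiser over a closed class the generator row only sees that class, and this is combined with $\psi\ge 0$. You apply it once, to the limit $V_0$, by contradiction. Your version is shorter and does not depend on the iterates $C_n, V^{(n)}_0$. It shows that every closed class meets $\{g=\psi\}$ for any $g\ge\psi\ge 0$ that is $\beta$-harmonic off $\{g=\psi\}$. The paper's version is constructive: it exhibits a piece of the stopping region that is computable in advance (the maximisers of $\psi$ on each recurrent class). Yours only shows implicitly that each closed class contains a point of $\{V_0=\psi\}$.

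Note that $W\subset\{V_0=\psi\}$ actually follows straight from the definition of $V_0$ by the same bound, $V_0(x)\le\max_{F_i}\psi=\psi(x)$ for $x\in W_i$. So neither the induction nor statement (2) is strictly needed. The probabilistic reduction in your Step 1 is the same standard fact the paper invokes without proof.
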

        \begin{proof}
            Since $E$ is finite, it is well-known that there exist recurrent classes $F_1,...,F_d$ ($d \geq 1$) such that
                \begin{align*}
                    \forall x \in E, \quad \mathbb P_x \Big( \exists t_0 \geq 0 \ \text{such that} \ \forall t \geq t_0, \quad X_t \in \bigcup_{i=1}^d F_i \Big) = 1,
                \end{align*}
            or equivalently, $X_t \in \bigcup_{i=1}^d F_i$ eventually $\mathbb P_x$-a.s. for every $x \in E$. Define
                \begin{align*}
                    \forall i = 1,...,d, \quad W_i \coloneqq  \argmax_{F_i} \psi, \qquad W \coloneqq  \bigcup_{i=1}^d W_i.
                \end{align*}
            We shall prove by induction that $W \subset C_n$ for all $n \geq 1$. Clearly, $W \subset \{ \Q[\psi] - \beta \psi \leq 0 \} = C_1$. Suppose this is true for some $n \geq 1$, we prove it is true for $n+1$. Observe from $\eqref{V^n_0}$, \eqref{V^n_0 def} that
                \begin{align*}
                    \forall i = 1,...,d, \ \forall x \in W_i, \qquad V^{(n)}_0(x) = \psi(x) = \max_{F_i} \psi = \max_{F_i} V^{(n)}_0.
                \end{align*}
            For all $y \in F_i$,  $V^{(n)}_0 (y) \leq \max_{F_i} \psi =  \psi(x_0) = V^{(n)}_0 (x_0)$ for some $x_0 \in W_i \subset C_n$ (previous induction step)  because if we start at some $y \in F_i$, the process stays in $F_i$ forever. This implies 
                \begin{align*}
                    \forall i = 1,...,d, \ \forall x \in W_i, \qquad \Q[V^{(n)}_0](x) -  \beta V^{(n)}_0(x) \leq 0,
                \end{align*}
            and hence $W \subset C_n \cap \{ \Q[V^{(n)}_0] - \beta V^{(n)}_0 \leq 0 \} = C_{n+1}$, finishing the induction. This gives $W \subset \bigcap_{n \geq 1} C_n =  \{ V_0 = \psi \}$. Since $\forall x \in E$, $\mathbb P_x( h (W) < +\infty) = 1$, we must have $\forall x \in E$, $\mathbb P_x( h(\{ V_0  = \psi\}) < + \infty) = 1$ as claimed.      
        \end{proof}
\subsection{First case: $\phi \geq V_0$}

The next result shows why we can compute the value function $V$ if  $\phi \geq V_0$.
    \begin{theorem} \label{theo: V co V0}
        It holds that $V = V_0$ and the pair of hitting times $\Big(h(\{ V_0 = \psi \}), h( \{ V_0 = \phi \}) \Big)$ forms a NE. 
    \end{theorem}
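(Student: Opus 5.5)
We verify the hypotheses of Theorem \ref{theo: sufficient for NE} with $\widetilde V = V_0$, using the properties of $V_0$ listed in Theorem \ref{theo: recall}. Define
\begin{align*}
A \coloneqq \{V_0 = \psi\}\setminus\{\phi=\psi\}, \qquad B \coloneqq \{V_0=\phi\}\setminus\{\phi=\psi\}.
\end{align*}
These sets are disjoint, since a point in both would satisfy $\psi = V_0 = \phi$. Each is also disjoint from $\{\phi=\psi\}$ by construction.

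We now check the four conditions of \eqref{conditions for a NE}.
\begin{enumerate}
\item \emph{Domination.} We have $\psi\le V_0$ because $V_0$ dominates $\psi$, and $V_0\le\phi$ by the standing assumption $\phi\ge V_0$.
\item \emph{Condition on $A$.} On $A\subset\{V_0=\psi\}$ we have $V_0=\psi$, and item 1 of Theorem \ref{theo: recall} gives $\Q[V_0]-\beta V_0\le 0$.
\item \emph{Condition off $A\cup B\cup\{\phi=\psi\}$.} Such points lie outside $\{V_0=\psi\}$, so item 2 of Theorem \ref{theo: recall} gives $\Q[V_0]-\beta V_0=0$.
\item \emph{Condition on $B$.} This is the only nontrivial point: we need $\Q[V_0](x)-\beta V_0(x)\ge 0$ for $x\in B$.
\end{enumerate}

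For the $B$ condition, fix $x\in B$. Since $x\notin\{\phi=\psi\}$, we have $V_0(x)=\phi(x)>\psi(x)$, so $x\notin\{V_0=\psi\}$. Item 2 of Theorem \ref{theo: recall} then gives $\Q[V_0](x)-\beta V_0(x)=0$, which is $\ge 0$. So $B$ in fact needs no separate argument; it is handled by the same fact as the third condition.

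Theorem \ref{theo: sufficient for NE} therefore yields $V=V_0$, and shows that the pair
\begin{align*}
\Big(h\big(A\cup\{\phi=\psi\}\big),\ h\big(B\cup\{\phi=\psi\}\big)\Big)
\end{align*}
is a NE.

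It remains to identify this pair with $\big(h(\{V_0=\psi\}),\,h(\{V_0=\phi\})\big)$. Because $\psi\le V_0\le\phi$, on $\{\phi=\psi\}$ we have $V_0=\psi=\phi$. Hence $\{\phi=\psi\}$ is contained in both $\{V_0=\psi\}$ and $\{V_0=\phi\}$. This gives
\begin{align*}
A\cup\{\phi=\psi\}=\{V_0=\psi\}, \qquad B\cup\{\phi=\psi\}=\{V_0=\phi\},
\end{align*}
and the hitting times coincide exactly, which completes the proof.

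The main obstacle is this bookkeeping around $\{\phi=\psi\}$. Theorem \ref{theo: sufficient for NE} requires $A$ and $B$ to be disjoint from $\{\phi=\psi\}$, so they cannot simply be taken as $\{V_0=\psi\}$ and $\{V_0=\phi\}$. The two set equalities above are what recover the stated pair of hitting times.
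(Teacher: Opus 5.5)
Your proposal is correct and follows the paper's proof: like the paper, you take $A=\{V_0=\psi\}\cap\{\phi>\psi\}$ and $B=\{V_0=\phi\}\cap\{\phi>\psi\}$ and check the hypotheses of Theorem~\ref{theo: sufficient for NE} with $\widetilde V=V_0$ using Theorem~\ref{theo: recall}. Your last step, showing $A\cup\{\phi=\psi\}=\{V_0=\psi\}$ and $B\cup\{\phi=\psi\}=\{V_0=\phi\}$ from $\psi\le V_0\le\phi$, spells out an identification of the hitting times that the paper leaves implicit.
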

    \begin{proof}
        We use Theorem \ref{theo: sufficient for NE} here. Let $A\coloneqq  \{ V_0 = \psi \} \cap \{ \phi > \psi \}$ and $B\coloneqq  \{ V_0 = \phi \} \cap \{ \phi > \psi \}$. By Theorem \ref{theo: recall}, we have $ \Q[V_0] - \beta V_0 \leq 0$ on $A$ and $\Q[V_0] - \beta V_0 =0$ on $B$. Note that $A,B$ and $\{ \phi = \psi \}$ are pairwise disjoint and $\{ V_0 = \psi \} \subset A\cup B \cup \{\phi =\psi \}$. By Theorem \ref{theo: recall} again, $\Q[V_0] - \beta V_0 =0$ outside $A \cup B \cup \{ \phi = \psi \}$. Thus, all the conditions in Theorem \ref{theo: sufficient for NE} are satisfied and the result follows.
    \end{proof}
\begin{rem}
    We have to emphasize that NE are not unique. For example, when $\phi = V_0 = V$, then $\forall \gamma_0 \in \mathcal M$, the pair $(h(\{ V = \psi \}), \gamma_0)$ is also a NE. Indeed, it is easy to see that
        \begin{align*}
            \forall x \in E, \ \forall \gamma \in \mathcal M, \quad R_x(h(\{ V = \psi \}), \gamma) = V_0(x) = V(x).
        \end{align*}
    Also, $\forall (x,\tau, \gamma) \in E \times \mathcal M^2$, $R_x(\tau,\gamma) \leq V_0(x)$ because $\psi \leq \phi = V_0$ and $V_0$ is $\beta$-excessive. As a consequence, we get 
        \begin{align*}
           \forall \tau, \gamma \in \mathcal M, \qquad R_x(\tau, \gamma_0) \leq  R_x(h(\{ V = \psi \}), \gamma_0) \leq R_x(h(\{ V = \psi \}), \gamma),
        \end{align*}
    and therefore $(h(\{ V = \psi \}), \gamma_0)$ forms a NE (cf. \eqref{intro: NE criteria}).
\end{rem}

Theorem \ref{theo: V co V0} says that, if $\phi \geq V_0$ then $V = V_0$, which allows us to use the forward algorithm introduced in the previous Subsection \ref{subsec: forward for one player} to compute $V_0 = V$. Since we are working with finite state spaces, the following sufficient conditions could help us quickly verify $\phi \geq V_0$ (the last two bullet points do not require that $E$ is finite): 
        \begin{itemize}
            \item $\min \phi \geq \max \psi$, since it holds that for any $x\in E$, $V_0(x) \leq \max \psi$ from definition \eqref{func: V0},
            \item $\psi$ is $\beta$-excessive, since it implies $V_0 = \psi$ (recalling from Theorem \ref{theo: recall} that $V_0$ is the smallest $\beta$-excessive function dominating $\psi$) and so $V_0 \leq \phi$,
            \item $\phi$ is $\beta$-excessive, since it implies $\phi \geq V_0$ because $V_0$ is the smallest $\beta$-excessive function dominating $\psi$.
        \end{itemize}
        
    We now move on to tackle the general case, which is the main objective of this paper.

\subsection{Second case: \texorpdfstring{$\{ V_0 > \phi \} \neq \emptyset$}{Lg} } \label{Subsection for second case}
To find the value function $V$ in this case, we proceed as follows. Set 
        \begin{equation} \label{def S_1}
            S_1 \coloneqq \{ V_0 > \phi \} \cup \{ \phi = \psi \}, \quad \gamma_1\coloneqq  h(S_1) \qquad \text{and}  \quad V_1(x) \coloneqq  \sup_{\tau} R_x(\tau, \gamma_1).
        \end{equation}           
    To be able to compute $V_1$, we shall recursively define a sequence of functions that increases to $V_1$ after a finite number of iterations. Firstly, we define
        \begin{align} \label{eq: V^1_1}
            D^{(1)}_1 \coloneqq  \{ \Q[\psi] - \beta \psi \leq 0 \} \cap S^c_1, \quad  \tau^{(1)}_1 \coloneqq  h (D^{(1)}_1) \qquad \text{and} \quad V^{(1)}_1(x)\coloneqq  R_x(\tau^{(1)}_1, \gamma_1).
        \end{align}
    Next, suppose we have defined $(D^{(n)}_1, \tau^{(n)}_1 , V^{(n)}_1)$ for some $n \geq 1$, we define the next triple $(D^{(n+1)}_1, \tau^{(n+1)}_1 , V^{(n+1)}_1 )$ as 
        \begin{align} \label{eq: V^(n)_1}
            D^{(n+1)}_1\coloneqq  \{ \Q[V^{(n)}_1] - \beta V^{(n)}_1  \leq 0 \} \cap D^{(n)}_1, \quad \tau^{(n+1)}_1 \coloneqq  h(D^{(n+1)}_1) \quad \text{and} \quad V^{(n+1)}_1(x)\coloneqq  R_x(\tau^{(n+1)}_1, \gamma_1).
        \end{align}

    We have the following observation 
    \begin{lemma} \label{prop: obs 1}
        For any subset $D\subset E$ such that $D \cap S_1 = \emptyset$. Then it holds that 
            \begin{align*}
                \forall x \in  E,  \qquad u_D(x) \coloneqq  R_x(h(D), \gamma_1) \leq V_0(x) \wedge \phi(x).
            \end{align*}
        In particular, we have $\forall n \geq 1$, $V^{(n)}_1 \leq V_0 \wedge \phi$.
    \end{lemma}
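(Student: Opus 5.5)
We need to show two bounds, $u_D \le V_0$ and $u_D \le \phi$, for $u_D(x) = R_x(h(D),\gamma_1)$ with $D \cap S_1 = \emptyset$. The idea is to handle each separately, using only the definition of $R_x$ together with $\psi \le \phi$ and the properties of $V_0$ from Theorem \ref{theo: recall}.

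\textbf{Bound $u_D \le V_0$.} Write $\tau = h(D)$ and note that $\tau \neq \gamma_1$ on $\{\tau<\infty\}$, since $D$ and $S_1$ are disjoint. We have
\begin{align*}
u_D(x) = \E_x\big[\mathrm{e}^{-\beta \tau}\psi(X_\tau)\mathbf 1_{\{\tau \le \gamma_1\}} + \mathrm{e}^{-\beta\gamma_1}\phi(X_{\gamma_1})\mathbf 1_{\{\tau>\gamma_1\}}\big].
\end{align*}
On $\{\tau>\gamma_1\}$, with $\gamma_1<\infty$, we have $X_{\gamma_1}\in S_1=\{V_0>\phi\}\cup\{\phi=\psi\}$. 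This gives $\phi(X_{\gamma_1}) \le V_0(X_{\gamma_1})$: on $\{V_0>\phi\}$ it holds strictly, and on $\{\phi=\psi\}$ we use $\phi=\psi\le V_0$. On the other event, $\psi\le V_0$. Hence
\begin{align*}
u_D(x)\le \E_x\big[\mathrm{e}^{-\beta(\tau\wedge\gamma_1)}V_0(X_{\tau\wedge\gamma_1})\big].
\end{align*}
The infinite-time convention gives $0$, which is fine since $V_0\ge\psi\ge0$. By Dynkin's formula \eqref{intro: DF}, the right-hand side equals $V_0(x)$ plus a nonpositive integral, because $V_0$ is $\beta$-excessive by Theorem \ref{theo: recall}. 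So $u_D\le V_0$.

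\textbf{Bound $u_D\le\phi$.} By Lemma \ref{lem: Q-beta = 0} applied to the disjoint sets $D$ and $S_1$, we know $u_D=\psi\le\phi$ on $D$ and $u_D=\phi$ on $S_1$, while $\Q[u_D]-\beta u_D=0$ off $D\cup S_1$. For $x\notin D\cup S_1$, however, the comparison with $\phi$ is not immediate, since $\phi$ has no excessivity property. This is the main obstacle. The plan is to use the previous bound: on $S_1^c$ we have $V_0\le\phi$, because $S_1^c\subset\{V_0\le\phi\}$. For $x\notin S_1$ we therefore get $u_D(x)\le V_0(x)\le\phi(x)$, and for $x\in S_1$ we have $u_D(x)=\phi(x)$. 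So $u_D\le V_0\wedge\phi$ everywhere; in particular, on $S_1$ we use $u_D=\phi\le V_0$ there, which follows from the first bound.

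\textbf{The ``in particular''.} Each $D^{(n)}_1\subset S_1^c$ by construction (\eqref{eq: V^1_1} and \eqref{eq: V^(n)_1} intersect with $D^{(1)}_1\subset S_1^c$), and $V^{(n)}_1=u_{D^{(n)}_1}$. Applying the bound with $D=D^{(n)}_1$ gives $V^{(n)}_1\le V_0\wedge\phi$ for all $n\ge1$.
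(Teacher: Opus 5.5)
Your proof is correct and follows essentially the same route as the paper: you dominate the payoff by $V_0$ evaluated at $h(D)\wedge\gamma_1$ (using $\psi\le V_0$, and $\phi\le V_0$ on $S_1$), apply Dynkin's formula with the $\beta$-excessivity of $V_0$, and combine this with $u_D=\phi$ on $S_1$. The only difference is cosmetic: you run the Dynkin bound for every $x$ and then explicitly use $V_0\le\phi$ on $S_1^c$, a step the paper leaves implicit after treating $x\notin S_1$.
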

    \begin{proof}
        By the definition of $u_D$, we have $u_D(x) = \phi(x) $ on the set $S_1$ thus either $u_D(x) = \phi(x) < V_0(x) $ or $ u_D(x) = \phi(x) = \psi(x) \leq V_0$. For $x \notin S_1$, by Dynkin's formula \eqref{intro: DF}
            \begin{align*}
                u_D(x) = R_x(h(D), \gamma_1) &= \E_x[\mathrm{e}^{-\beta h(D)}\psi(X_{h(D)}) \mathbf{1}_{ \{h(D) \leq \gamma_1\}} + \mathrm{e}^{-\beta \gamma_1}\phi(X_{\gamma_1}) \mathbf{1}_{\{ h(D) >\gamma_1\}} ] \\
                &\leq  \E_x[\mathrm{e}^{-\beta h(D)}V_0(X_{h(D)}) \mathbf{1}_{\{ h(D) \leq \gamma_1\}} + \mathrm{e}^{-\beta \gamma_1}V_0(X_{\gamma_1}) \mathbf{1}_{\{h(D) >\gamma_1\}} ]  \\ 
                &\leq \E_x[\mathrm{e}^{-\beta (h(D) \wedge \gamma_1)} V_0(X_{h(D) \wedge \gamma_1})] \\
                &\leq V_0(x) + \E_x\left[ \int^{h(D) \wedge \gamma_1}_0 \mathrm{e}^{-\beta s}\Big(\Q[V_0](X_s) - \beta V_0(X_s) \Big) ds \right] \\
                &\leq V_0(x)
            \end{align*}
    because $V_0$ is $\beta$-excessive.
    \end{proof}
    \begin{lemma} \label{cor: direct of prop obs 1}
        We have $V^{(1)}_1 \geq \psi$, and the strict inequality $ V^{(1)}_1 > \psi $ holds outside the set \[ D^{(1)}_1 \cup S_1  = \{ \Q[\psi] - \beta \psi \leq 0 \} \cup \{V_0 > \phi \} \cup \{ \phi = \psi \}.\]
    \end{lemma}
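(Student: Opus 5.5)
Set $D \coloneqq D^{(1)}_1$ and $\tau \coloneqq \tau^{(1)}_1 = h(D)$. The idea is to write $V^{(1)}_1 - \psi$ through Dynkin's formula applied to $\psi$, with the stopping time $\eta \coloneqq h(D) \wedge \gamma_1 = h(D \cup S_1)$. Since $D \cap S_1 = \emptyset$ and $\psi \le \phi$, we get
\begin{align*}
V^{(1)}_1(x) &= \E_x\big[\mathrm{e}^{-\beta h(D)}\psi(X_{h(D)})\mathbf{1}_{\{h(D)\le\gamma_1\}} + \mathrm{e}^{-\beta\gamma_1}\phi(X_{\gamma_1})\mathbf{1}_{\{h(D)>\gamma_1\}}\big] \\
&\geq \E_x\big[\mathrm{e}^{-\beta \eta}\psi(X_\eta)\big] = \psi(x) + \E_x\left[\int_0^{\eta}\mathrm{e}^{-\beta s}\big(\Q[\psi](X_s)-\beta\psi(X_s)\big)ds\right].
\end{align*}
The convention $\mathrm{e}^{-\beta\eta}f(X_\eta)=0$ on $\{\eta=+\infty\}$ is harmless here, because both sides use it consistently and $\phi,\psi\ge 0$.

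For $s<\eta$ we have $X_s\notin D\cup S_1$. Together with $X_s\notin S_1$, this forces $X_s\notin\{\Q[\psi]-\beta\psi\le 0\}$. Hence the integrand is strictly positive on $[0,\eta)$, and the integral is $\ge 0$, which gives $V^{(1)}_1\ge\psi$ everywhere.

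On $D\cup S_1$ itself there is nothing more to check. There $V^{(1)}_1$ equals either $\psi$ or $\phi\ge\psi$.

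For strictness, take $x\notin D\cup S_1$. Under $\mathbb P_x$ we have $\eta\ge\sigma>0$ a.s., where $\sigma$ is the first jump time. On $[0,\sigma)$ the process sits at $x$, so the integral is bounded below by
\[
\big(\Q[\psi](x)-\beta\psi(x)\big)\,\E_x\!\left[\frac{1-\mathrm{e}^{-\beta\sigma}}{\beta}\right] > 0 .
\]
The remaining part of the integral over $[\sigma,\eta)$ is nonnegative by the same pointwise argument. If $x$ is absorbing, then $\sigma=\infty$ and the bound still holds, with $\E_x[(1-\mathrm{e}^{-\beta\sigma})/\beta] = 1/\beta$. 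Therefore $V^{(1)}_1(x)>\psi(x)$.

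Finally, to identify $D^{(1)}_1\cup S_1$ with the displayed union, note that $D^{(1)}_1 = \{\Q[\psi]-\beta\psi\le0\}\cap S_1^c$, so $D^{(1)}_1\cup S_1=\{\Q[\psi]-\beta\psi\le0\}\cup S_1$. Substituting the definition \eqref{def S_1} of $S_1$ gives exactly the set in the statement.

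The only delicate point is the inequality replacing $\phi$ by $\psi$ on $\{h(D)>\gamma_1\}$ when $\gamma_1=+\infty$. Even then, both terms vanish by the convention, so no real obstacle arises.
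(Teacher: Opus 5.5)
Your proof is correct and follows essentially the same route as the paper. You bound $V^{(1)}_1(x)$ below by $\E_x[\mathrm{e}^{-\beta\eta}\psi(X_\eta)]$ with $\eta = h(D^{(1)}_1\cup S_1)$ using $\phi\geq\psi$, apply Dynkin's formula to $\psi$, and use that the integrand is strictly positive before $\eta$, where $\eta>0$ almost surely when starting outside $D^{(1)}_1\cup S_1$; your first-jump-time bound only spells out the strictness step that the paper states in one line.
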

    \begin{proof}
        For all $x \notin D^{(1)}_1 \cup S_1$, we have
            \begin{align*}
                V^{(1)}_1(x) &\geq \E_x\Big[\mathrm{e}^{-\beta (\tau^{(1)}_1 \wedge \gamma_1) } \psi(X_{\tau^{(1)}_1 \wedge \gamma_1}) \Big] \\
                &= \psi(x) + \E_x\left[ \int^{\tau^{(1)}_1 \wedge \gamma_1}_0 \mathrm{e}^{-\beta s}\Big(\Q[\psi](X_s) - \beta \psi(X_s) \Big) ds \right] \\
                &> \psi(x),
            \end{align*}
        because $ \tau^{(1)}_1 \wedge \gamma_1 = h (D^{(1)}_1 \cup S_1) >0$ $\mathbb P_x$-a.s.  and $ \Q[\psi](X_s) - \beta \psi(X_s) > 0 $ before the time $\tau^{(1)}_1 \wedge \gamma_1$. Finally, for $x \in D^{(1)}_1 \cup S_1 $, the value $V^{(1)}_1(x)$ is either $\psi(x)$ or $\phi(x)$, both are no less than $\psi(x)$, yielding the second statement.
    \end{proof}

    The next result shows why the sequence of functions $(V^{(n)}_1)_{n\geq 1}$ is important.
    \begin{proposition} \label{prop: Vn}
        The following statements are true for all $n \geq 1$.
        \begin{enumerate}
            \item $\forall x \notin D^{(n)}_1 \cup S_1$, $\Q[V^{(n)}_1] (x) - \beta V^{(n)}_1(x)  = 0$
            \item $\psi \leq V^{(n)}_1 \leq V^{(n+1)}_1 \leq \phi$
            \item $V^{(n)}_1 = \psi$ on $D^{(n)}_1$, $V^{(n)}_1 = \phi$ on $S_1$ and the strict inequality $V^{(n)}_1 > \psi$ holds outside $D^{(n)}_1 \cup S_1$.
        \end{enumerate}
    \end{proposition}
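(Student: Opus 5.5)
We prove the three statements together by induction on $n$, following the forward-algorithm pattern of Miclo \& Villeneuve.

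\textbf{Statements 1 and 3 for fixed $n$.} These follow directly from the definition $V^{(n)}_1(x)=R_x(h(D^{(n)}_1),\gamma_1)$, since $D^{(n)}_1\subset D^{(1)}_1\subset S_1^c$ and the two sets are disjoint. Lemma \ref{lem: Q-beta = 0}, applied with $B=D^{(n)}_1$ and $C=S_1$, gives $V^{(n)}_1=\psi$ on $D^{(n)}_1$, $V^{(n)}_1=\phi$ on $S_1$, and $\Q[V^{(n)}_1]-\beta V^{(n)}_1=0$ off $D^{(n)}_1\cup S_1$. The bound $V^{(n)}_1\le\phi$ is Lemma \ref{prop: obs 1}. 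It remains to prove $V^{(n)}_1\ge\psi$, with strict inequality off $D^{(n)}_1\cup S_1$, together with the monotonicity $V^{(n)}_1\le V^{(n+1)}_1$. The case $n=1$ is Lemma \ref{cor: direct of prop obs 1}.

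\textbf{Monotonicity.} Assume statement 3 holds at step $n$, and put $\eta\coloneqq h(D^{(n+1)}_1\cup S_1)$. Since $V^{(n+1)}_1$ equals $\psi$ on $D^{(n+1)}_1$ and $\phi$ on $S_1$, and $V^{(n)}_1$ equals these same values there, we can write
\[
V^{(n+1)}_1(x)=\E_x\big[\mathrm{e}^{-\beta\eta}V^{(n)}_1(X_\eta)\big].
\]
Dynkin's formula \eqref{intro: DF} then gives
\[
V^{(n+1)}_1(x)=V^{(n)}_1(x)+\E_x\Big[\int_0^{\eta}\mathrm{e}^{-\beta s}\big(\Q[V^{(n)}_1]-\beta V^{(n)}_1\big)(X_s)\,ds\Big].
\]
Before time $\eta$ the chain lies outside $D^{(n+1)}_1\cup S_1$. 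On $E\setminus(D^{(n)}_1\cup S_1)$ the integrand vanishes by statement 1. On $D^{(n)}_1\setminus D^{(n+1)}_1$ it is strictly positive by the definition \eqref{eq: V^(n)_1}. Hence the integrand is $\ge0$ and $V^{(n+1)}_1\ge V^{(n)}_1$. This gives $V^{(n+1)}_1\ge V^{(n)}_1\ge\psi$.

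\textbf{Strict inequality at step $n+1$ (the main obstacle).} We need $V^{(n+1)}_1>\psi$ off $D^{(n+1)}_1\cup S_1$. Off $D^{(n)}_1\cup S_1$ this is immediate: $V^{(n+1)}_1\ge V^{(n)}_1>\psi$ by the induction hypothesis. The delicate points are the newly released states $x\in D^{(n)}_1\setminus D^{(n+1)}_1$, where $V^{(n)}_1(x)=\psi(x)$. For such $x$ we have $\eta>0$ $\mathbb P_x$-a.s., because $\eta\ge\sigma$, the first jump time. Near time $0$ the integrand equals $(\Q[V^{(n)}_1]-\beta V^{(n)}_1)(x)>0$ and is $\ge0$ afterwards. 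The expectation is therefore at least
\[
(\Q[V^{(n)}_1]-\beta V^{(n)}_1)(x)\,\E_x\big[(1-\mathrm{e}^{-\beta\sigma})/\beta\big]>0,
\]
so $V^{(n+1)}_1(x)>V^{(n)}_1(x)=\psi(x)$. This completes statement 3 at step $n+1$. Statement 2 then follows from monotonicity combined with Lemma \ref{prop: obs 1}.
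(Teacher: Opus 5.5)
Your proposal is correct and follows the paper's proof: Lemma \ref{lem: Q-beta = 0} gives (i) and the boundary values, Lemma \ref{prop: obs 1} gives the upper bound, Dynkin's formula applied to $V^{(n)}_1$ up to $h(D^{(n+1)}_1\cup S_1)$ gives monotonicity, and induction from Lemma \ref{cor: direct of prop obs 1} gives the strict inequality. The only minor difference is at the newly released states $D^{(n)}_1\setminus D^{(n+1)}_1$: the paper uses Lemma \ref{SMP} with the exit time from that set, whereas you lower-bound the same Dynkin integral by its contribution before the first jump, which is equally valid and slightly more direct.
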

    \begin{proof}
        Statement $\operatorname{(i)}$ is just a direct application of Lemma \ref{lem: Q-beta = 0}. The last inequality in $\operatorname{(ii)}$ was proved in Lemma \ref{prop: obs 1} and thanks to Lemma \ref{cor: direct of prop obs 1}, the first inequality then automatically holds true if we can prove the middle one, i.e. the sequence $(V^{(n)}_1)_{n\geq 1}$ is increasing. To this end, since $D^{(n)}_1 \cap S_1 = \emptyset $, so (recall the definitions \eqref{eq: V^1_1}, \eqref{eq: V^(n)_1}) by Lemma \ref{lem: Q-beta = 0},  we have $V^{(n)}_1= \phi$ on $S_1$ and $V^{(n)}_1= \psi$ on $D^{(n)}_1$, thus gives the first half of $\operatorname{(iii)}$. Next, we have 
            \begin{align*}
                V^{(n+1)}_1(x) &= \E_x \left[\mathrm{e}^{-\beta \tau^{(n+1)}_1} \psi(X_{\tau^{(n+1)}_1}) \mathbf{1}_{ \{\tau^{(n+1)}_1 \leq \gamma_1 \}}  + \mathrm{e}^{-\beta \gamma_1} \phi(X_{\gamma_1}) \mathbf{1}_{ \{\tau^{(n+1)}_1 > \gamma_1 \}}  \right] \\
                &=  \E_x \left[\mathrm{e}^{-\beta \tau^{(n+1)}_1} V^{(n)}_1(X_{\tau^{(n+1)}_1}) \mathbf{1}_{\{ \tau^{(n+1)}_1 \leq \gamma_1\}}  + \mathrm{e}^{-\beta \gamma_1} V^{(n)}_1(X_{\gamma_1}) \mathbf{1}_{\{ \tau^{(n+1)}_1 > \gamma_1\}}  \right] \\
                &= \E_x \left[ \mathrm{e}^{-\beta (\tau^{(n+1)}_1 \wedge \gamma_1) } V^{(n)}_1(X_{\tau^{(n+1)}_1 \wedge \gamma_1}) \right] \\
                &= V^{(n)}_1(x) + \E_x \left[ \int^{\tau^{(n+1)}_1 \wedge \gamma_1}_0 \mathrm{e}^{-\beta s} \left( \Q[V^{(n)}_1](X_s) -\beta V^{(n)}_1(X_s) \right)  ds\right] \\
                &\geq  V^{(n)}_1(x),
            \end{align*}
    where in the last inequality, we observe that
    	\begin{align}
		(D_1^{(n+1)} \cup S_1)^c =  (D_1^{(n)} \cup S_1)^c \cup \big( \{ Q[V_1^{(n)}] - \beta V_1^{(n)} >0\} \cap S_1^c\big),
	\end{align}
    so $ Q[V_1^{(n)}] - \beta V_1^{(n)}  \geq 0 $ outside $D^{(n+1)}_1 \cup S_1 $  by  $\operatorname{(i)}$ and the definition of $D^{(n+1)}_1$. This proves $(V^{(n)}_1)_{n\geq 1}$ is increasing.

    For the second half of $\operatorname{(iii)}$, we proceed by induction. For $n = 1$, Lemma \ref{cor: direct of prop obs 1} gives that $V^{(1)}_1 > \psi$ outside $D^{(1)}_1 \cup S_1$. Assume now that the claim holds for some $n \geq 1$, and we show it also holds for $n+1$. Take any $x \in D^{(n)}_1 \setminus D^{(n+1)}_1$, and define
        \begin{align*}
            \tau' \coloneqq  \inf \{t \geq 0: X_t  \notin D^{(n)}_1 \setminus D^{(n+1)}_1  \}.
        \end{align*}
    Note that $\tau^{(n+1)}_1 \wedge \gamma_1 \geq \tau' > 0$ $\mathbb{P}_x$-almost surely.  
Applying Lemma~\ref{SMP} then yields
        \begin{align*}
            V^{(n+1)}_1(x) &= \E_x \left[ \mathrm{e}^{-\beta (\tau^{(n+1)}_1 \wedge \gamma_1) } V^{(n+1)}_1(X_{\tau^{(n+1)}_1 \wedge \gamma_1}) \right] = \E_x \left[ \mathrm{e}^{-\beta \tau' } V^{(n+1)}_1(X_{\tau'}) \right] \\
            &\geq \E_x \left[ \mathrm{e}^{-\beta \tau' } V^{(n)}_1(X_{\tau'}) \right] \\
            &\geq V^{(n)}_1(x) + \E_x \left[ \int^{\tau'}_0 \mathrm{e}^{-\beta s} \left( \Q[V^{(n)}_1](X_s) -\beta V^{(n)}_1(X_s) \right)ds \right] \\
            &> V^{(n)}_1(x).
        \end{align*}
    because  $\Q[V^{(n)}_1] -\beta V^{(n)}_1 > 0$ on $D^{(n)}_1 \setminus D^{(n+1)}_1$. This shows $V^{(n+1)}_1(x) > V^{(n)}_1(x) \geq \psi(x)$ for all $x \in D^{(n)}_1 \setminus D^{(n+1)}_1$. We also have $V^{(n+1)}_1(x) \geq  V^{(n)}_1(x) > \psi(x) $ for all $x \notin  D^{(n)}_1 \cup S_1$ and thus $V^{(n+1)}_1(x) > \phi(x)$ for all $x \notin D^{(n+1)}_1 \cup S_1 $, ending the induction proof.
    \end{proof}

    Since the sequence of sets $(D^{(n)}_1)_{n\geq 1}$ is decreasing, disjoint from $S_1$ by definition and the sequence of functions $(V^{(n)}_1)_{n\geq 1}$ is increasing, we can define 
        \begin{align*}
            D_1 \coloneqq  \bigcap_{n\geq 1} D^{(n)}_1, \quad \tau_1 \coloneqq  h(D_1) \qquad \text{and} \qquad V^\infty_1 \coloneqq \lim_{n\rightarrow + \infty} V^{(n)}_1.
        \end{align*}
    \begin{theorem} \label{theo: V1 properties}
        We have $V_1 = V^\infty_1$ and for all $x \in E$, $V_1(x) = R_x(\tau_1,\gamma_1)$. Moreover, the following hold
        \begin{enumerate}
            \item $V_1 = \psi$ on $D_1$,  $V_1 = \phi$ on $S_1$ and $V_1 > \psi$ outside $D_1 \cup S_1$
            \item $\Q[V_1] -\beta V_1 =0$ outside $D_1 \cup S_1$
            \item $\Q[V_1] -\beta V_1 \leq 0$ on $D_1$
        \end{enumerate}
    \end{theorem}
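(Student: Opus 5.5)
Since $E$ is finite and $(D^{(n)}_1)_{n\geq1}$ is a decreasing sequence of subsets of $E$, the sequence stabilizes. There is some $N$ with $D^{(n)}_1 = D_1$ for all $n \geq N$. Then $\tau^{(n)}_1 = \tau_1$ and $V^{(n)}_1 = V^{(N)}_1$ for $n\geq N$, so $V^\infty_1 = V^{(N)}_1 = R_x(\tau_1,\gamma_1)$.

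Items (i) and (ii) then follow from Proposition \ref{prop: Vn} (iii) and (i) applied at $n=N$. For item (iii), stabilization gives $D_1 = D^{(N+1)}_1 = \{\Q[V^{(N)}_1]-\beta V^{(N)}_1 \leq 0\}\cap D^{(N)}_1$, and hence $\Q[V_1^\infty]-\beta V_1^\infty\leq 0$ on $D_1$. So the structural properties come essentially for free from stabilization.

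The real content is the identification $V_1 = V^\infty_1$, i.e. $\sup_\tau R_x(\tau,\gamma_1) = R_x(\tau_1,\gamma_1)$. The inequality $\geq$ is trivial by taking $\tau=\tau_1$. For $\leq$, I mimic the sufficiency argument of Theorem \ref{theo: sufficient for NE}, with $\widetilde V := V^\infty_1$. The goal is to show that $\Q[\widetilde V]-\beta\widetilde V\leq 0$ on $S_1^c$ and $\psi\leq\widetilde V$ everywhere. The second is Proposition \ref{prop: Vn} (ii). The first holds on $D_1$ by (iii) and on $(D_1\cup S_1)^c$ by (ii), where it is equality. Then for any stopping time $\tau$:
\begin{align*}
R_x(\tau,\gamma_1) &\leq \E_x\big[\mathrm{e}^{-\beta(\tau\wedge\gamma_1)}\widetilde V(X_{\tau\wedge\gamma_1})\big] \\
&= \widetilde V(x) + \E_x\left[\int_0^{\tau\wedge\gamma_1}\mathrm{e}^{-\beta s}\big(\Q[\widetilde V]-\beta\widetilde V\big)(X_s)\,ds\right] \leq \widetilde V(x).
\end{align*}
The first step replaces $\psi$ by $\widetilde V\ge\psi$ on $\{\tau\le\gamma_1\}$, and uses $\phi = \widetilde V$ on $S_1$ at time $\gamma_1$ on $\{\tau>\gamma_1\}$. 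The equality is Dynkin's formula \eqref{intro: DF}. The last inequality holds because $X_s\notin S_1$ for $s<\gamma_1$.

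The main point requiring care is that the case $\tau = \gamma_1$ is correctly attributed to $\psi$, with $\psi\le\phi=\widetilde V$ there, so the inequality is harmless. One must also handle the convention on $\{\tau\wedge\gamma_1=\infty\}$, which is covered by the convention $\mathrm{e}^{-\beta\eta}f(X_\eta)=0$ used in the statement of Dynkin's formula. With these checks, $V_1 = V^\infty_1 = R_\cdot(\tau_1,\gamma_1)$, and properties (i)–(iii) transfer to $V_1$.
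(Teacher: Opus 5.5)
Your proposal is correct and follows the paper's proof. The key step, $\sup_\tau R_x(\tau,\gamma_1)\le V^\infty_1(x)$, is the same Dynkin-formula verification, using $\psi\le V^\infty_1$, $V^\infty_1=\phi$ on $S_1$, and $\Q[V^\infty_1]-\beta V^\infty_1\le 0$ off $S_1$. The only difference is how you get (i)--(iii): you note that $(D^{(n)}_1)_{n\ge1}$ is eventually constant, so $V^{(n)}_1=V^{(N)}_1$ and Proposition~\ref{prop: Vn} applies directly at $n=N$, whereas the paper passes to the pointwise limit in $n$; both are valid since $E$ is finite.
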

    \begin{proof}
        Assume for now that $V_1 = V^\infty_1$. For $\operatorname{(i)}$, if $x \in D_1$ and $y \in S_1$ then respectively $V^{(n)}_1 (x) = \psi(x)$ and $V^{(n)}_1 (y) = \phi(y)$  for all $n \geq 1$. Hence passing to the limit $V^\infty_1 = \psi$ on $D_1$ and $V^\infty_1 = \phi$ on $S_1$. If $x \notin D_1 \cup S_1$,  then there is an $n\geq1$ such that $x \notin D^{(n)}_1 \cup S_1$, and by Proposition \ref{prop: Vn}, we have $V_1^\infty(x) \geq V_1^{(n)}(x) > \psi(x)$. 
        
        For $\operatorname{(ii)}$, if $x \notin D_1 \cup S_1$, then there exists $k$ such that $x \notin D^{(n)}_1$ for all $n \geq k$, thus
            \begin{align*}
                \Q[V^\infty_1](x) -\beta V^\infty_1(x) = \lim_{n\rightarrow + \infty} \Q[V^{(n)}_1](x) -\beta V^{(n)}_1(x) = 0.
            \end{align*}
        We prove the last statement $\operatorname{(iii)}$. If $x \in D_1$, then $x \in D^{(n)}_1$ for all $n\geq 1$, which means $Q[V^{(n)}_1](x) -\beta V^{(n)}_1(x) \leq 0 $ for all $n \geq 1$. Thus, passing to the limit we get 
            \begin{align*}
                \Q[V^\infty_1](x) -\beta V^\infty_1(x) = \lim_{n\rightarrow + \infty} \Q[V^{(n)}_1](x) -\beta V^{(n)}_1(x) \leq 0.
            \end{align*}
        Finally, we need to prove $V_1 = V^\infty_1$. From Lemma \ref{lem: Q-beta = 0}, we can conclude that $V^\infty_1(x) = R_x(\tau_1,\gamma_1)$. For any stopping time $\tau$, we have 
            \begin{align*}
                        \forall x \in E, \quad R_x(\tau, \gamma_1) &=\E_x \left[\mathrm{e}^{-\beta \tau }\psi(X_{\tau}) \mathbf{1}_{ \{\tau \leq \gamma_1 \}}  + \mathrm{e}^{-\beta \gamma_1} \phi(X_{\gamma_1}) \mathbf{1}_{ \{\tau > \gamma_1 \}}  \right] \\
                &= \E_x \left[\mathrm{e}^{-\beta \tau }\psi(X_{\tau}) \mathbf{1}_{ \{\tau \leq \gamma_1\}}  + \mathrm{e}^{-\beta \gamma_1} V^\infty_1(X_{\gamma_1}) \mathbf{1}_{ \{\tau > \gamma_1 \}}  \right] \\
                &\leq \E_x \left[\mathrm{e}^{-\beta \tau }V^\infty_1(X_{\tau}) \mathbf{1}_{ \{\tau \leq \gamma_1\}}  + \mathrm{e}^{-\beta \gamma_1} V^\infty_1(X_{\gamma_1}) \mathbf{1}_{\{ \tau > \gamma_1 \}}  \right]\\
                &= V^\infty_1(x) + \E_x \left[ \int^{\tau\wedge \gamma_1}_0 \mathrm{e}^{-\beta s} \Big( \Q[V^\infty_1](X_s) -\beta V^\infty_1(X_s) \Big) ds\right]\\
                &\leq V^\infty_1(x)
            \end{align*}
        because $\Q[V^\infty_1] -\beta V^\infty_1 \leq 0$ outside the set $S_1$. This gives 
            \begin{align*}
               \forall x \in E, \quad  V^\infty_1(x) \geq \sup_\tau R_x(\tau,\gamma_1) = V_1(x) \geq R_x(\tau_1,\gamma_1) = V^\infty_1(x),
            \end{align*}
        and so $V_1 = V^\infty_1$.
    \end{proof}
     
     Now comes the important step in our algorithm. We have so far  defined $V_1$ that lies between $\psi$ and $\phi$. Suppose for now that the triple $(D_k,S_k,V_k)$ has been obtained for some $k \geq 1$, we define the triple $(D_{k+1},S_{k+1}, V_{k+1})$ inductively as follows. Set 
        \begin{align} \label{S_k}
            S_{k+1} = \Big(S_k \cap \{ \Q[V_k] - \beta V_k \geq 0 \} \Big) \cup \{ \phi = \psi\}, \qquad \gamma_{k+1} \coloneqq  h(S_{k+1}),
        \end{align}
and define 
	\begin{align*}
		\forall x \in E, \quad	V_{k+1}(x) \coloneqq  \sup_{\tau}R_x(\tau, \gamma_{k+1}).
	\end{align*}
        The set $D_{k+1}$ is then obtained from the same procedure above for $V_1$ such that
            \begin{align} \label{V_k chracter}
                \forall x \in E, \quad V_{k+1}(x) =R_x(\tau_{k+1}, \gamma_{k+1}), \qquad \text{where} \quad \tau_{k+1} = h(D_{k+1}).
            \end{align}
        For example, let us illustrate the second step $k=2$ to compute the function $V_2$. We just proceed analogously for the function $V_1$. Initially, we set $S_2 \coloneqq  (S_1 \cap \{ \Q[V_1] - \beta V_1 \geq 0 \}) \cup \{ \phi = \psi \}$, and define
            \begin{align*}
                D^{(1)}_2 \coloneqq  \{ \Q[\psi] - \beta \psi \leq 0\} \cap S_2^c, \qquad  \tau^{(1)}_2 \coloneqq  h(D^{(1)}_2), \qquad V^{(1)}_2(x)\coloneqq  R_x(\tau^{(1)}_2, \gamma_2).
            \end{align*}
        Next, suppose we have defined $(D^{(n)}_2, \tau^{(n)}_2 , V^{(n)}_2)$ for some $n \geq 1$, we define the next triple $(D^{(n+1)}_2, \tau^{(n+1)}_2 , V^{(n+1)}_2 )$ by
            \begin{align*}
                D^{(n+1)}_2\coloneqq  \{ \Q[V^{(n)}_2] - \beta V^{(n)}_2  \leq 0 \} \cap D^{(n)}_2, \qquad  \tau^{(n+1)}_2 = h(D^{(n+1)}_2),
            \end{align*}
and 
	\begin{align*}
		 \forall x \in E, \quad V^{(n+1)}_2(x)\coloneqq  R_x(\tau^{(n+1)}_2, \gamma_2).
	\end{align*}
	
        Finally, the set $D_2$ is obtained as the limit $D_2 \coloneqq  \bigcap_{n \geq 1} D^{(n)}_2$. Adapting the same proof of Theorem \ref{theo: V1 properties}, we can show that $V_2(x) = R_x(\tau_2, \gamma_2) = \lim_{n\rightarrow + \infty} V^{(n)}_2(x)$ and every statement in Theorem \ref{theo: V1 properties} also holds true for $V_2$ (with all the subscript 1's replaced by 2). Repeat the same procedure for $k = 3,4,...$ to obtain the sequence $(D_k, S_k, V_k)_{k \geq 1}$. For the sake of completeness we state the following general properties of the sequence $(V_k)_{k \geq 1}$. 
        \begin{theorem} \label{theo: big seq}
            The following statements hold for all $k \geq 1$
            \begin{enumerate}
                \item $\forall x \in E$, $V_k(x) = R_x(\tau_k, \gamma_k)$
                \item $V_k = \psi$ on $D_k$, $V_k = \phi$ on $S_k$ and  $V_k > \psi$ outside $D_k \cup S_k$ (note that $D_k \cap S_k = \emptyset$).
                \item $\Q[V_k] -\beta V_k \leq 0 $ on $D_k$ and $\Q[V_k] -\beta V_k = 0$ outside $D_k \cup S_k$
                \item $\psi \leq V_{k+1} \leq V_k \leq \phi$
                \item $(D_k)_{k \geq 1}$ is increasing and $(S_k)_{k \geq 1}$  is decreasing
            \end{enumerate}
        \end{theorem}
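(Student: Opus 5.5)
The proof goes by induction on $k$, carrying (i)--(v) together. The case $k=1$ is exactly Theorem \ref{theo: V1 properties} together with Lemma \ref{prop: obs 1}, which gives $V_1 \le V_0\wedge\phi \le \phi$. Lemma \ref{prop: obs 1} was the only place where the special form of $S_1$ was used, namely to get $V^{(n)}_1\le\phi$. So the induction step has two tasks: first obtain the bounds $\psi\le V_{k+1}\le V_k\le\phi$ directly from the value function $V_{k+1}=\sup_\tau R_x(\tau,\gamma_{k+1})$, and then rerun the inner forward scheme for index $k+1$ unchanged.

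\emph{Step 1: $V_{k+1}\le V_k$ (the key step).} Fix $\tau$ and $x$. On $\{\tau>\gamma_{k+1}\}$ replace $\phi(X_{\gamma_{k+1}})$ by $V_k(X_{\gamma_{k+1}})$. This is legitimate because $S_{k+1}\subset S_k$, as shown below, and $V_k=\phi$ on $S_k$ by (ii). On $\{\tau\le\gamma_{k+1}\}$ bound $\psi\le V_k$ using (iv). Then apply Dynkin's formula \eqref{intro: DF} to $V_k$ up to $\tau\wedge\gamma_{k+1}$. It remains to check that $\Q[V_k]-\beta V_k\le 0$ outside $S_{k+1}$:
\begin{itemize}
\item outside $S_k$ this follows from (iii), since the quantity is $\le 0$ on $D_k$ and $=0$ off $D_k\cup S_k$;
\item on $S_k\setminus S_{k+1}$ it holds by the very definition \eqref{S_k}, because these points lie outside $\{\Q[V_k]-\beta V_k\ge 0\}$, so the quantity is strictly negative there.
\end{itemize}
Hence $R_x(\tau,\gamma_{k+1})\le V_k(x)$ for every $\tau$, and so $V_{k+1}\le V_k\le\phi$. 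The lower bound $V_{k+1}\ge\psi$ comes from taking $\tau=0$. This settles (iv), using only (ii), (iii) and (iv) at rank $k$.

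\emph{Step 2: the inner scheme at rank $k+1$, giving (i)--(iii).} Define $D^{(n)}_{k+1}$, $\tau^{(n)}_{k+1}$ and $V^{(n)}_{k+1}$ as in the text. Each $D^{(n)}_{k+1}$ is disjoint from $S_{k+1}$, so $V^{(n)}_{k+1}=R_x(h(D^{(n)}_{k+1}),\gamma_{k+1})\le V_{k+1}\le\phi$; this bound replaces Lemma \ref{prop: obs 1}. With it, the arguments of Lemma \ref{cor: direct of prop obs 1}, Proposition \ref{prop: Vn} and Theorem \ref{theo: V1 properties} go through verbatim with $S_1$ replaced by $S_{k+1}$. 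Those arguments use only Lemma \ref{lem: Q-beta = 0}, Lemma \ref{SMP}, Dynkin's formula and the fact that the disjoint sets $D^{(n)}_{k+1}$ and $S_{k+1}$ carry the values $\psi$ and $\phi$ respectively. This yields $V_{k+1}=\lim_n V^{(n)}_{k+1}=R_x(\tau_{k+1},\gamma_{k+1})$ together with (ii) and (iii) at rank $k+1$.

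\emph{Step 3: monotonicity of the sets, giving (v).} $S_{k+1}\subset S_k$ follows from \eqref{S_k} because $\{\phi=\psi\}\subset S_1$, and inductively $\{\phi=\psi\}\subset S_k$; this inclusion was already needed in Step 1. For $D_k$, the key observation is that (ii) characterises it as
\[
D_k=\{V_k=\psi\}\setminus S_k,
\]
since $V_k=\psi$ on $D_k$ and $V_k>\psi$ off $D_k\cup S_k$. Then $V_{k+1}\le V_k$ and $V_{k+1}\ge\psi$ give $\{V_k=\psi\}\subset\{V_{k+1}=\psi\}$. Combined with $S_{k+1}\subset S_k$, this gives $D_k\subset\{V_{k+1}=\psi\}\setminus S_{k+1}=D_{k+1}$.

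I expect the main obstacle to be Step 1. It is precisely where the choice of $S_{k+1}$ matters, because removing from $S_k$ the points where $\Q[V_k]-\beta V_k<0$ is what makes $V_k$ a supersolution for the game stopped at $\gamma_{k+1}$. One must also take care that the induction hypothesis is ordered correctly: (iv) at rank $k+1$ must be established before Step 2 invokes the bound $V^{(n)}_{k+1}\le\phi$.
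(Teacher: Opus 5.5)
Your proof is correct and is essentially the paper's argument. Items (i)--(iii) come from rerunning the inner scheme with $S_{k+1}$ in place of $S_1$. Item (iv) comes from Dynkin's formula for $V_k$ up to (at most) $\gamma_{k+1}$, using $V_k=\phi$ on $S_{k+1}\subset S_k$, $\psi\le V_k$, and $\Q[V_k]-\beta V_k\le 0$ off $S_{k+1}$. Item (v) follows from $\psi\le V_{k+1}\le V_k$ together with $S_{k+1}\subset S_k$.

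The differences are minor. You bound $R_x(\tau,\gamma_{k+1})$ for every $\tau$ rather than only for $\tau_{k+1}$, which lets you obtain (iv) before (i) at rank $k+1$. Your ordering caveat is harmless but unnecessary, because the bound $V^{(n)}_{k+1}\le\phi$ plays no role in the arguments for (i)--(iii).
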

        \begin{proof}
        The first three statements together with the inequality $\psi \leq V_k$ for any $k \geq 1$ can be obtained easily by adapting the same proofs of Proposition \ref{prop: Vn} and Theorem \ref{theo: V1 properties} above (with virtually no changes except the subscripts). In $\operatorname{(v)}$, we already have $(S_k)_{k \geq 1}$ is decreasing by its definition in \eqref{S_k}, thus establishing half of it. Now, we prove $V_{k+1} \leq V_k$ in $\operatorname{(iv)}$, and when this is established, all of the inequalities $V_{k+1} \leq V_k \leq V_1 \leq V_0 \wedge \phi \leq  \phi$ immediately follow from Lemma \ref{cor: direct of prop obs 1}. Indeed, $S_{k+1} \subset S_k$ implies $V_{k+1} = V_k = \phi$ on $S_{k+1}$ by Lemma \ref{lem: Q-beta = 0} (recall that $S_{k+1} = \Big(S_k \cap \{ \Q[V_k] - \beta V_k \geq 0 \} \Big) \cup \{ \phi = \psi\}$). Item $\operatorname{(iii)}$ implies that $\Q[V_k] - \beta V_k \leq 0$ outside $S_{k+1}$. Therefore, for all $x \in E$,
                \begin{align*}
                    V_{k+1}(x) = R_x(\tau_{k+1}, \gamma_{k+1}) &\leq E_x[\mathrm{e}^{-\beta (\tau_{k+1} \wedge \gamma_{k+1})}V_{k}(X_{\tau_{k+1} \wedge \gamma_{k+1}})] \\
                    &= V_k(x) + \E_x \left[ \int^{\tau_{k+1} \wedge \gamma_{k+1}} _{0} \mathrm{e}^{-\beta s}\Big( \Q[V_k](X_s) - \beta V_k(X_s) \Big) ds \right] \\
                    &\leq V_k(x) 
                \end{align*}
        because before $ \tau_{k+1} \wedge \gamma_{k+1} = h(D_{k+1} \cup S_{k+1})$ the process is still outside $S_{k+1}$, implying $ \Q[V_k](X_s) - \beta V_k(X_s) \leq 0$.
        Only the first half of  $\operatorname{(v)}$ remains. Let $x \in D_k$ (which is disjoint from $S_k \supset \{\phi = \psi \}$), we have $\phi(x) > V_k(x) = \psi(x)$ and this implies $\phi(x) > V_{k+1}(x) = \psi(x)$ because $V_{k+1}(x) \leq V_k(x)$. It follows that $x \in D_{k+1}$ because if $x \notin D_{k+1}$ then either $V_{k+1}(x) > \psi(x)$ (if $x \notin D_k \cup S_k$) or $V_{k+1}(x) = \phi(x)$ (if $x \in S_k$), a contradiction.
        \end{proof}

        Theorem \ref{theo: big seq} allows us to define the triple $(D_\infty, S_\infty, V_\infty)$ as follows 
            \begin{align*}
                D_\infty = \bigcup_{k \geq 1} D_k, \qquad S_\infty \coloneqq  \bigcap_{k \geq 1} S_k, \qquad V_\infty(x) = \lim_{k \rightarrow + \infty}  V_k(x) = \inf_{k\geq 1} V_k(x),
            \end{align*}
        and let
            \begin{align*}
                \tau_{\psi,\phi} \coloneqq  h(\{ \phi = \psi \}), \qquad \tau_{\infty} \coloneqq  h(D_\infty), \qquad \gamma_\infty \coloneqq  h(S_\infty).
            \end{align*}
\begin{rem}
        		We observe from the definition of the sequence $(D_k,S_k,V_k)_{k \geq 0}$ in \eqref{S_k} that if there is an integer $N \geq 1$ such that $S_{N+1} = S_N$, then necessarily $V_{N+1} = V_N$ and $D_{N+1} = D_N$. Consequently, the sequence stabilizes:
		\begin{align}
			\forall k \in \mathbb N, \quad S_{N+k} = S_N = S_\infty,\quad  V_{N+k} = V_N = V_\infty, \quad D_{N+k} = D_N= D_\infty	
		\end{align}	
		 This means that the sequence $(D_k,S_k,V_k)_{k \geq 0}$ becomes stationary after a finite number of iterations.. If $N$ is the smallest integer for which $S_{N+1} = S_N$, then we have the bound (recalling $\{ \phi =\psi \} \subset S_k$ for all $k \geq 1$)
			\begin{align}
				N \leq \min\{ |E| - | \{ \phi = \psi \} |,  |S_1|  \} = \min\{ |E| - | \{ \phi = \psi \} |, | \{ V_0 > \phi \} \cup \{ \phi = \psi \} | \},
			\end{align}
	where $| \cdot |$ denotes cardinality. In particular, we have $N < |E|$,  meaning that the number of iterations required to reach $V_\infty$ is strictly less than the number of the states in $E$.
        		\end{rem}
		
        The next result is the main result of this paper
        \begin{theorem} \label{theo: main}
            The pair $(\tau_\infty \wedge \tau_{\psi,\phi}, \gamma_\infty)$ forms a NE and $V_\infty = V$. (Note that $\tau_\infty \wedge \tau_{\psi,\phi}$  is the stopping time  $h(\{ \phi = \psi \} \cup D_\infty)$.) Additionally, we have  $V_\infty = \phi$ on $S_\infty$ (this does not mean $S_\infty = \{ V = \phi \}$) and $V_\infty = \psi < \phi$ on $D_\infty$.
        \end{theorem}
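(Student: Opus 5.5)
The natural route is to verify the hypotheses of Theorem \ref{theo: sufficient for NE} for $\widetilde V \coloneqq V_\infty$, with $A \coloneqq D_\infty$ and $B \coloneqq S_\infty \setminus \{\phi = \psi\}$. By the Remark preceding the statement, the sequence $(D_k,S_k,V_k)$ is stationary from some index $N$ on: $S_\infty = S_N = S_{N+1}$, $D_\infty = D_N$ and $V_\infty = V_N$. Every property of $V_N$ from Theorem \ref{theo: big seq} therefore transfers verbatim to $V_\infty$. In particular, $V_\infty = \phi$ on $S_\infty$, $V_\infty = \psi$ on $D_\infty$, $\psi \le V_\infty \le \phi$, $\Q[V_\infty]-\beta V_\infty \le 0$ on $D_\infty$, and $\Q[V_\infty]-\beta V_\infty = 0$ outside $D_\infty \cup S_\infty$. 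Since $D_N \cap S_N = \emptyset$ and $\{\phi=\psi\} \subset S_N$, the sets $A$, $B$ and $\{\phi=\psi\}$ are pairwise disjoint. Moreover, on $D_\infty$ we get $V_\infty = \psi < \phi$, which is the last claim of the statement.

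The remaining condition in \eqref{conditions for a NE} is the sign condition on $B$, namely $\Q[V_\infty]-\beta V_\infty \ge 0$ on $S_\infty \setminus\{\phi=\psi\}$. This is exactly where stationarity of the $S_k$ is used. By \eqref{S_k}, the equality $S_{N+1}=S_N$ means $S_N = (S_N \cap \{\Q[V_N]-\beta V_N \ge 0\}) \cup \{\phi=\psi\}$. Hence every $x \in S_N\setminus\{\phi=\psi\}$ satisfies $\Q[V_N](x)-\beta V_N(x)\ge 0$, which is the required inequality since $V_N=V_\infty$. The third condition, vanishing of $\Q[V_\infty]-\beta V_\infty$ outside $A\cup B\cup\{\phi=\psi\} = D_\infty\cup S_\infty$, is item (iii) of Theorem \ref{theo: big seq}.

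Theorem \ref{theo: sufficient for NE} then gives $V_\infty = V$ and shows that $\big(h(D_\infty\cup\{\phi=\psi\}),\, h(B\cup\{\phi=\psi\})\big)$ is a NE. Since $B \cup \{\phi=\psi\} = S_\infty$, this pair is exactly $(\tau_\infty\wedge\tau_{\psi,\phi}, \gamma_\infty)$.

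The main point to check carefully is the stationarity claim in the Remark, i.e.\ that $S_{N+1}=S_N$ forces $V_{N+1}=V_N$ and $D_{N+1}=D_N$, so that the properties of the limit are those of a single $V_N$ rather than of an infimum. This holds because $V_{k}$ depends only on $S_{k}$, via $V_k(x)=\sup_\tau R_x(\tau,h(S_k))$, and $D_k$ is built from $S_k$ by the deterministic inner procedure. Finiteness of $E$ together with the monotonicity in Theorem \ref{theo: big seq}(v) guarantees that such an $N$ exists. If one prefers to avoid the Remark, an alternative is to pass to the limit $k\to\infty$ in items (ii)–(iii) as in the proof of Theorem \ref{theo: V1 properties}, using that the monotone sequences of sets eventually stabilize.
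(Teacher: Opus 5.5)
Your proposal is correct and follows the paper's proof essentially step for step. Like the paper, you use the stationarity from the Remark to identify $(D_\infty,S_\infty,V_\infty)$ with $(D_N,S_N,V_N)$, read off $\Q[V_N]-\beta V_N\ge 0$ on $S_\infty\cap\{\phi>\psi\}$ from $S_{N+1}=S_N$, and apply Theorem \ref{theo: sufficient for NE} with $A=D_\infty$ and $B=S_\infty\cap\{\phi>\psi\}$; your justification that $V_k$ and $D_k$ are determined by $S_k$ alone usefully spells out a step the paper only asserts in the Remark.
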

        \begin{proof}
            From the previous remark, there is $N < |E|$ such that  $S_{N+k} = S_N = S_\infty$ for all $k \geq 1$. Hence, we have
                \begin{align*}
                    \Big(S_N\cap \{ \phi > \psi \}  \cap \{ \Q[V_N] - \beta V_N \geq 0 \} \Big) \cup \{ \phi = \psi \} = S_N,
                \end{align*}
            which implies $S_N \cap \{ \phi > \psi \} \subset \{ \Q[V_N] - \beta V_N \geq 0 \}$. We also have $D_{N+k} = D_N$ for all $k \geq 1$, and by Theorem \ref{theo: big seq} $\operatorname{(iii)}$, we get $\Q[V_N] - \beta [V_N] = 0$ outside $D_N \cup S_N$ and $\Q[V_N] - \beta [V_N] \leq 0$ on $D_N$. Applying Lemma \ref{lem: critical obs} and Theorem \ref{theo: sufficient for NE} for
            \begin{align*}
                A= D_\infty = D_N, \qquad  B = S_{N}\cap \{ \phi > \psi \} = S_{\infty}\cap \{ \phi > \psi \},
            \end{align*}
            we get $V_\infty(x) = R_x(\tau_\infty, \gamma_\infty) = R_x(\tau_\infty \wedge \tau_{\psi,\phi}, \gamma_\infty)$ and the pair $(\tau_\infty \wedge \tau_{\psi,\phi}, \gamma_\infty)$ is also a NE. Thus, $V_\infty =V$, which is the desired result. The last statement follows from the expression of $V_\infty(x) = R_x(\tau_\infty \wedge \tau_{\psi,\phi}, \gamma_\infty)$.
        \end{proof}
        As a consequence of Theorems \ref{theo: big seq} and \ref{theo: main}, the following result is obtained by taking $A = D_\infty$ and $B = S_\infty \cap \{ \phi > \psi \}$.
        \begin{cor}[Necessity part for Theorem \ref{theo: sufficient for NE}] \label{cor: necessity part}
        If $V$ is the value function then there exist  disjoint sets $A$ and  $B$ such that $A \cap \{ \phi = \psi \} = B \cap \{ \phi = \psi \} = \emptyset$ satisfying \eqref{conditions for a NE}.
        \end{cor}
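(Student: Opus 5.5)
We take $A \coloneqq D_\infty$ and $B \coloneqq S_\infty \cap \{\phi > \psi\}$, with $\widetilde V \coloneqq V_\infty$. Theorem \ref{theo: main} already gives $V_\infty = V$, so it remains to check that $A$, $B$ satisfy the disjointness requirements and that $V$ satisfies \eqref{conditions for a NE}. By the remark preceding Theorem \ref{theo: main}, there is $N < |E|$ with $D_\infty = D_N$, $S_\infty = S_N$ and $V_\infty = V_N$. We can therefore read off every property from Theorem \ref{theo: big seq} applied at the single index $N$, and no limiting argument is needed.

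\textbf{Disjointness.} By Theorem \ref{theo: big seq}(ii), $D_N \cap S_N = \emptyset$. Since $\{\phi = \psi\} \subset S_N$, the set $A = D_N$ is disjoint from $\{\phi=\psi\}$. By construction, $B$ is disjoint from $\{\phi = \psi\}$, and $A \cap B \subset D_N \cap S_N = \emptyset$.

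\textbf{Conditions \eqref{conditions for a NE}.}
\begin{itemize}
\item On $A$, Theorem \ref{theo: big seq}(ii) and (iii) give $V_N = \psi$ and $\Q[V_N]-\beta V_N \leq 0$.
\item On $B$, Theorem \ref{theo: big seq}(ii) gives $V_N = \phi$. For the sign condition we use stationarity: $S_{N+1} = S_N$ means $(S_N \cap \{\Q[V_N]-\beta V_N \geq 0\}) \cup \{\phi=\psi\} = S_N$. Hence every $x \in S_N$ with $\phi(x) > \psi(x)$ lies in $\{\Q[V_N]-\beta V_N \geq 0\}$, exactly as in the proof of Theorem \ref{theo: main}.
\item Outside $A \cup B \cup \{\phi=\psi\}$: since $S_N = B \cup \{\phi=\psi\}$, this region is exactly the complement of $D_N \cup S_N$, where Theorem \ref{theo: big seq}(iii) gives $\Q[V_N]-\beta V_N = 0$.
\item The bound $\psi \leq V_N \leq \phi$ is Theorem \ref{theo: big seq}(iv).
\end{itemize}

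With $V = V_N$, all four lines of \eqref{conditions for a NE} hold for $A$ and $B$, which proves the corollary. The only step requiring care is the sign condition on $B$. It is not stated in Theorem \ref{theo: big seq} itself and must be extracted from the stabilization $S_{N+1}=S_N$. This is also why $B$ excludes $\{\phi=\psi\}$: on that set the sign of $\Q[V]-\beta V$ is not controlled.
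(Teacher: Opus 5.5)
Your proof is correct and follows the paper's own argument: the paper likewise takes $A = D_\infty$ and $B = S_\infty \cap \{\phi > \psi\}$, and reads off \eqref{conditions for a NE} from Theorem \ref{theo: big seq} together with the stabilization $S_{N+1}=S_N$ used in the proof of Theorem \ref{theo: main}, which you have simply spelled out. One small point: the existence of a finite $N$ already follows from $(S_k)_{k\geq 1}$ being a decreasing sequence of subsets of the finite set $E$, so you do not need the bound $N<|E|$ from the remark.
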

    \subsection{Auxillary results and discussions} \label{subsec: aux}
    \subsubsection{On the choice of the set $S_1 = \{ V_0 > \phi \} \cup \{ \phi = \psi \}$.}
        In the above algorithm, we may have $S_\infty \neq  \{ V = \phi \}$, but it is always true that $D_\infty \cup \{ \phi = \psi\} = \{ V = \psi \}$. If one wants to recover the set $\{ V = \phi \}$, the set $S_1$ in \eqref{def S_1} should be replaced by the set $\widetilde S_1 \coloneqq  \{ V_0 \geq \phi \}$ and then follows the steps in the previous section to obtain a new decreasing sequence $(\widetilde V_k)_{k\geq 1}$. The proofs of previous results remain unchanged. Observe that $S_1 \subset \widetilde S_1$ and in fact $\widetilde S_1 \setminus S_1 = \{ V_0 = \phi \} \cap \{ \phi > \psi \}$. In particular, if $\{ V_0 =\phi \} = \emptyset$ then  $S_1 = \widetilde S_1$, and the procedure yields the same sequence $(V_k)_{k \geq 1}$. More precisely, we have 
            \begin{proposition} \label{S' to initialize}
                Suppose $(\widetilde D_k, \widetilde \tau_k, \widetilde S_k, \widetilde \gamma_k, \widetilde V_k)_{k \in \mathbb N \cup \{\infty\}}$ be the sequence of quintuplet obtained from re-performing all the steps in previous section with $S_1$ replaced by $\widetilde S_1 =\{ V_0 \geq \phi \} \supset S_1$. Then all statements in Theorems \ref{theo: big seq} and \ref{theo: main} hold for $(\widetilde D_k, \widetilde \tau_k, \widetilde S_k, \widetilde \gamma_k, \widetilde V_k)_{k \in \mathbb N \cup \{\infty\}}$. Moreover, we have
                    \begin{enumerate}
                        \item $\widetilde D_\infty = D_\infty $, and $\widetilde D_\infty \cup \{ \phi = \psi \} = \{ V = \psi \}$. In particular, $\widetilde \tau_\infty = h(\widetilde D_\infty) = h(D_\infty) = \tau_\infty$.
                        \item $\widetilde V_\infty = V_\infty = V$
                        \item $\widetilde V_k < \phi$ outside $ \widetilde S_k$ for all $ k \in \mathbb N \cup \{ \infty \}$
                        \item $\psi < \widetilde V_k < \phi$ outside $\widetilde D_k \cup \widetilde S_k$ for all $ k \in \mathbb N \cup \{ \infty \}$
                        \item $\widetilde S_\infty = \{ V = \phi \}$
                    \end{enumerate}
            \end{proposition}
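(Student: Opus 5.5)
The first step is to check that nothing in the proofs of Lemma \ref{prop: obs 1}, Lemma \ref{cor: direct of prop obs 1}, Proposition \ref{prop: Vn} and Theorems \ref{theo: V1 properties}, \ref{theo: big seq}, \ref{theo: main} depends on the precise form of $S_1$. They only use three facts: $\{\phi=\psi\}\subset S_1$, $\phi\le V_0$ on $S_1$ (used in Lemma \ref{prop: obs 1}), and the recursive definitions. All three hold for $\widetilde S_1=\{V_0\ge\phi\}$, which contains $\{\phi=\psi\}$ since $V_0\ge\psi$. So the tilde sequence inherits all of those statements, and by Theorem \ref{theo: main} we get $\widetilde V_\infty=V=V_\infty$. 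This is item (ii).

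Next I would prove items (iii) and (iv) by induction on $k$. They say that $\widetilde V_k<\phi$ outside $\widetilde S_k$; the lower bound $\widetilde V_k>\psi$ off $\widetilde D_k\cup\widetilde S_k$ is already known.

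For $k=1$, take $x\notin\widetilde S_1$, so that $V_0(x)<\phi(x)$. Lemma \ref{prop: obs 1}, adapted to $\widetilde S_1$, gives $\widetilde V_1\le V_0<\phi$ at $x$.

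For the inductive step, $\widetilde V_{k+1}\le\widetilde V_k<\phi$ already holds outside $\widetilde S_k$. It remains to treat $x\in\widetilde S_k\setminus\widetilde S_{k+1}$, where $\Q[\widetilde V_k](x)-\beta\widetilde V_k(x)<0$ and $x\notin\{\phi=\psi\}$. Let $\tau'$ be the exit time of $\widetilde S_k\setminus\widetilde S_{k+1}$, so $\tau'>0$ $\mathbb P_x$-a.s., and no point of $\widetilde D_{k+1}\cup\widetilde S_{k+1}$ is reached before $\tau'$. Lemma \ref{SMP} gives $\widetilde V_{k+1}(x)=\E_x[\mathrm{e}^{-\beta\tau'}\widetilde V_{k+1}(X_{\tau'})]$. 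Bounding this by $\E_x[\mathrm{e}^{-\beta\tau'}\widetilde V_k(X_{\tau'})]$ and applying Dynkin's formula, which has a strictly negative integrand before $\tau'$, gives $\widetilde V_{k+1}(x)<\widetilde V_k(x)=\phi(x)$. This mirrors the strict-inequality argument in Proposition \ref{prop: Vn}(iii).

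The case $k=\infty$ follows because the sequence stabilizes after finitely many steps.

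Item (v) follows. We have $\widetilde V_\infty=V=\phi$ on $\widetilde S_\infty$, and $V<\phi$ off $\widetilde S_\infty$ by (iii), so $\widetilde S_\infty=\{V=\phi\}$.

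For item (i), combining Theorem \ref{theo: main} with (iv) gives $\{V=\psi\}=\widetilde D_\infty\cup\{\phi=\psi\}$, with $\widetilde D_\infty\cap\{\phi=\psi\}=\emptyset$. For the untilded sequence I need the same identity. Theorem \ref{theo: main} gives $D_\infty\cup\{\phi=\psi\}\subset\{V=\psi\}$. For the reverse inclusion, take $x$ with $V(x)=\psi(x)<\phi(x)$; then $x\notin S_\infty$ since $V_\infty=\phi$ there. Because $x\notin D_\infty\cup S_\infty$, I would then want the strict bound $V_\infty>\psi$ from Theorem \ref{theo: big seq}(ii) at a stable index $N$. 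That gives $V(x)>\psi(x)$, a contradiction, so $x\in D_\infty$. Hence $D_\infty=\{V=\psi\}\setminus\{\phi=\psi\}=\widetilde D_\infty$, and the hitting times coincide.

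I expect the main obstacle to be making these strict inequalities rigorous. In (iii), the bound $\E_x[\mathrm{e}^{-\beta\tau'}\widetilde V_{k+1}(X_{\tau'})]\le\E_x[\mathrm{e}^{-\beta\tau'}\widetilde V_k(X_{\tau'})]$ uses monotonicity, and strictness must come from the integral term. In (i), the step "$V_\infty>\psi$ off $D_\infty\cup S_\infty$" requires passing the strict inequality to the stabilized index, which is legitimate only because the sequence is eventually constant.
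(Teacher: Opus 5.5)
Your overall plan is the paper's: inherit Theorems~\ref{theo: big seq} and~\ref{theo: main}, prove (iii) by induction, then deduce (iv), (v) and (i). Items (i), (ii), (v) and the passage to $k=\infty$ via stabilization are fine, and your argument for (i) is more explicit than the paper's.

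The gap is in the inductive step of (iii). You claim that no point of $\widetilde D_{k+1}\cup\widetilde S_{k+1}$ is reached before the exit time $\tau'$ of $\widetilde S_k\setminus\widetilde S_{k+1}$. That claim is what Lemma~\ref{SMP} needs, but it is false in general, because $\widetilde D_{k+1}$ can meet $\widetilde S_k\setminus\widetilde S_{k+1}$: a state abandoned by the inf-player can become a stopping state for the sup-player. The sequence $(\widetilde D_k)$ is increasing (Theorem~\ref{theo: big seq}(v)), so no containment $\widetilde D_{k+1}\subset \widetilde S_k^c$ is available. This is exactly where your analogy with Proposition~\ref{prop: Vn}(iii) breaks: there, $D^{(n+1)}_1\subset D^{(n)}_1$ keeps the excursion set disjoint from the target. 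The paper's Subexample 1.1 shows this concretely for the untilded sequence: $7\in S_1\setminus S_2$ yet $7\in D_2$. If the path passes through such a point before $\tau'$, then $\widetilde V_{k+1}(x)=\E_x[\mathrm{e}^{-\beta\tau'}\widetilde V_{k+1}(X_{\tau'})]$ need not hold. Since $\Q[\widetilde V_{k+1}]-\beta\widetilde V_{k+1}\le 0$ on $\widetilde D_{k+1}$, Dynkin's formula only gives $\widetilde V_{k+1}(x)\ge \E_x[\mathrm{e}^{-\beta\tau'}\widetilde V_{k+1}(X_{\tau'})]$, which is the wrong direction. The paper's proof has the same defect: it justifies this step by $\widetilde D_{k+1}\subset\widetilde D_k$, an inclusion that runs the wrong way.

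The repair is short. If $x\in(\widetilde S_k\setminus\widetilde S_{k+1})\cap\widetilde D_{k+1}$, then $\widetilde V_{k+1}(x)=\psi(x)<\phi(x)$, because $\{\phi=\psi\}\subset\widetilde S_{k+1}$. Otherwise $x\notin \widetilde D_{k+1}\cup\widetilde S_{k+1}$. Apply Lemma~\ref{SMP} with $B=C=\widetilde D_{k+1}\cup\widetilde S_{k+1}$, so that the first jump time $\sigma$ replaces $\tau'$. Using $\widetilde V_{k+1}\le\widetilde V_k$ and Dynkin's formula as in Lemma~\ref{lem: Q-beta = 0}, you get
\[
\widetilde V_{k+1}(x)=\E_x\big[\mathrm{e}^{-\beta\sigma}\widetilde V_{k+1}(X_\sigma)\big]\le \widetilde V_k(x)+\big(\Q[\widetilde V_k](x)-\beta\widetilde V_k(x)\big)\,\E_x\Big[\tfrac{1-\mathrm{e}^{-\beta\sigma}}{\beta}\Big]<\widetilde V_k(x)=\phi(x).
\]
Equivalently, rerun the proof of Theorem~\ref{theo: big seq}(iv) up to $h(\widetilde D_{k+1}\cup\widetilde S_{k+1})$: the integrand is $\le 0$ throughout and strictly negative on $[0,\sigma)$. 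With this patch the rest of your argument goes through unchanged.
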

            \begin{proof}
                Items $\operatorname{(i)}, \operatorname{(ii)}$  are straightforward from construction and from the proofs in the previous subsection so we only prove the last three by induction. Consider the function $\widetilde V_1$ and we know that  $\widetilde V_1\leq V_0 \wedge \phi$. Now $\widetilde V_1 = \phi$ on $\widetilde S_1$ and $\widetilde V_1 \leq V_0 < \phi$ outside $\widetilde S_1$, thus $\operatorname{(iii)}$ is true for $k = 1$. Suppose this is true for some $k \geq 1$, we prove it is also true for $k+1$. Indeed, set 
                    \begin{align*}
                        \Delta \coloneqq  \inf\{t \geq 0: X_t \notin \widetilde S_{k} \setminus \widetilde S_{k+1} \},
                    \end{align*}
                and observe that for all $x \in \widetilde S_{k} \setminus \widetilde S_{k+1} $, 
                \begin{align}
                		\widetilde \tau_{k+1} \wedge \widetilde \gamma_{k+1} = h(\widetilde D_{k+1} \cup \widetilde S_{k+1} ) \geq  h\Big( (\widetilde S_k)^c \cup \widetilde S_{k+1}\Big) = \Delta >0 \quad \mathbb P_x-\text{a.s.}
                \end{align}
       because $\widetilde D_{k+1}  \subset \widetilde D_k \subset (\widetilde S_k)^c$ (recall that $\widetilde D_k$ and $\widetilde S_k$ are disjoint). Using Lemma \ref{SMP} and the fact that $\widetilde V_{k+1} \leq \widetilde V_k$, we have for all $x \in \widetilde S_{k} \setminus \widetilde S_{k+1} $,
                    \begin{align*}
                        \widetilde V_{k+1}(x) = R_x(\widetilde \tau_{k+1}, \widetilde \gamma_{k+1}) &= \E_x[\mathrm{e}^{-\beta (\widetilde \tau_{k+1} \wedge \widetilde \gamma_{k+1})}\widetilde V_{k+1}(X_{\widetilde \tau_{k+1} \wedge \widetilde \gamma_{k+1}})] \\
                        &= \E_x[\mathrm{e}^{-\beta \Delta }\widetilde V_{k+1}(X_{\Delta})]  \\
                        &\leq \E_x[\mathrm{e}^{-\beta \Delta }\widetilde V_k(X_{\Delta})] \\
                        &= V_k(x) + \E_x \left[ \int^{\Delta} _{0} \mathrm{e}^{-\beta s}\Big( \Q[V_k](X_s) - \beta V_k(X_s) \Big) ds \right] \\
                        &< V_k(x) \\
                        &\leq \phi(x). 
                    \end{align*}
                because $ \Q[V_k] - \beta V_k < 0$ on $ \widetilde S_{k} \setminus \widetilde S_{k+1}$. But $ \widetilde V_{k+1} \leq  \widetilde V_k < \phi$ outside $\widetilde S_k$ and thus $\widetilde V_{k+1} < \phi$ outside $\widetilde S_{k+1}$. Passing this to the limit we get $\widetilde V_\infty < \phi$ outside $\widetilde S_\infty$. Now $\operatorname{(iv)}$ follows from $\operatorname{(iii)}$ and an analogue of Theorem \ref{theo: big seq} $\operatorname{(ii)}$. 
                Finally for $\operatorname{(v)}$, since  $\widetilde V_\infty > \psi $ outside $\widetilde D_\infty \cup \widetilde S_\infty$, it is not hard to see that $\widetilde S_\infty = \{ V = \phi \}$.
            \end{proof}
	
	Although it is rare in practice to encounter the case $\{ V_0 \geq \phi \} \neq \{ V_0 > \phi \}$,  the inclusion $S_1 \subset \widetilde S_1$ guarantees that $S_k \subset \widetilde S_k$ for all $k \in \mathbb N$ (see Theorem \ref{compare Sk and tilde Sk} below). As a result, initializing the algorithm with  $S_1$ is expected to lead to faster convergence compared to initialization with $\widetilde S_1$. For example, consider the case where $\phi > \psi$ and $|S_1| = 1$ while $|\widetilde S_1 |$  is 100 or more. In this case, initializing with $S_1$ would require only one iteration (as $S_2$ would be either empty or equal to $S_1$) making it evidently more efficient. Moreover, the two approaches may lead to different sets $S_\infty$ and $\widetilde S_\infty$ when $S_1 \neq \widetilde S_1$, potentially resulting in multiple NE. Examples illustrating both $S_\infty = \widetilde S_\infty$ and $S_\infty \neq \widetilde S_\infty$ are given in Section \ref{differentlimiting}. If our primary goal is to compute the value function $V$, initializing with $S_1$ consistently yields a faster and more efficient algorithm. If our goal is to recover the critical sets $\{ V =\psi \}$ and $\{ V = \phi \}$, then initializing with $\widetilde S_1$ is recommended. Alternatively, we can initialize with $S_1$, compute $V$ and then check which $x \in E$ such that $V(x) = \phi(x)$.
         \begin{theorem}\label{compare Sk and tilde Sk}
         	We have $S_k \subset \widetilde S_k$, $\widetilde D_k \subset D_k$ and $\widetilde V_k \geq V_k$ for all $k \in \mathbb N$.
         \end{theorem}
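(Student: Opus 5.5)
The plan is an induction on $k$ that proves all three inclusions together. The main tool is a general monotonicity fact. For $S \subset E$ put $V^S(x) \coloneqq \sup_\tau R_x(\tau, h(S))$. I want to show that $S \subset S'$ implies $V^S \le V^{S'}$ whenever $V^S$ has the structure produced by the modified forward algorithm of Subsection \ref{Subsection for second case}. That structure is $V^S(x) = R_x(h(D), h(S))$ with $D \cap S = \emptyset$, $V^S = \phi$ on $S$ and $\psi \le V^S \le \phi$, as in Theorem \ref{theo: big seq}. This lemma is the step I expect to be the main obstacle, because the sign of $\Q[V^{S'}] - \beta V^{S'}$ on $S' \setminus S$ is unknown. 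So Dynkin's formula cannot be applied to $V^{S'}$ directly, and I will use a strategy comparison instead.

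For the lemma, set $\rho \coloneqq h(D \cup S')$. Then $\rho \le h(D \cup S)$, and $V^S(y) = \E_y[\mathrm{e}^{-\beta h(D\cup S)} V^S(X_{h(D\cup S)})]$ for every $y$. The strong Markov argument of Lemma \ref{SMP}, with $C = D \cup S'$ and no restriction on the starting point, then gives
\[ V^S(x) = \E_x[\mathrm{e}^{-\beta \rho} V^S(X_\rho)] . \]
On the other side, $V^{S'}(x) \ge R_x(h(D), h(S'))$, and this quantity equals $\E_x[\mathrm{e}^{-\beta\rho} g(X_\rho)]$ with $g = \psi$ on $D$ and $g = \phi$ on $S' \setminus D$. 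On $D$ the two integrands agree, since $V^S = \psi$ there. On $S' \setminus D$ we have $V^S \le \phi = g$. Hence $V^S \le V^{S'}$.

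For the base case $k=1$, the lemma with $S_1 \subset \widetilde S_1$ gives $\widetilde V_1 \ge V_1$. For the sets, take $x \in \widetilde D_1$. Then $x \notin \widetilde S_1 \supset S_1$ and $V_1(x) \le \widetilde V_1(x) = \psi(x)$, so $V_1(x) = \psi(x)$. By Theorem \ref{theo: big seq} $\operatorname{(ii)}$ we have $V_1 > \psi$ outside $D_1 \cup S_1$, and $x \notin S_1$, so $x \in D_1$. Thus $\widetilde D_1 \subset D_1$. The same argument transfers $\widetilde D_k \subset D_k$ from $S_k \subset \widetilde S_k$ and $\widetilde V_k \ge V_k$ at every step.

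For the inductive step, assume $S_k \subset \widetilde S_k$ and $\widetilde V_k \ge V_k$. Let $x \in S_{k+1} \setminus \{\phi=\psi\}$, so that $x \in S_k \subset \widetilde S_k$ and $\Q[V_k](x) - \beta V_k(x) \ge 0$. Both functions equal $\phi$ on their respective $S$-sets, so $V_k(x) = \widetilde V_k(x) = \phi(x)$. Using $Q(x,y) \ge 0$ for $y \ne x$,
\[ \Q[\widetilde V_k](x) - \beta \widetilde V_k(x) = \Q[V_k](x) - \beta V_k(x) + \sum_{y \ne x} Q(x,y)\big(\widetilde V_k(y) - V_k(y)\big) \ge 0 . \]
Hence $x \in \widetilde S_{k+1}$. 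Since $\{\phi=\psi\}$ lies in both sets, this gives $S_{k+1} \subset \widetilde S_{k+1}$. The lemma then yields $\widetilde V_{k+1} \ge V_{k+1}$, and the argument from the base case gives $\widetilde D_{k+1} \subset D_{k+1}$, which closes the induction.
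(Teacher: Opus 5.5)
Your proof is correct. It reaches two of the three claims by a different route from the paper; the step $S_{k+1}\subset\widetilde S_{k+1}$ is the same off-diagonal positivity computation the paper uses.

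\textbf{How the paper argues.} In Lemma \ref{V1V1} and the sketched continuation, the paper runs a double induction. For fixed $k$ it compares the inner iterates of the two runs of the modified forward algorithm. It proves $\widetilde D^{(n)}_k\subset D^{(n)}_k$ and $V^{(n)}_k\le\widetilde V^{(n)}_k$ for every $n$, using the generator-sign argument together with Dynkin's formula for $V^{(n)}_k$, which is harmonic off $D^{(n)}_k\cup S_k$. Only then does it pass to the limit $n\to\infty$.

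\textbf{How you argue.} You never touch the inner iterates.
\begin{itemize}
\item Your strategy-comparison lemma shows that enlarging the inf-player's stopping set cannot decrease $\sup_\tau R_x(\tau,h(S))$. This holds as long as the smaller-set value is attained by a hitting time $h(D)$ with $D\cap S=\emptyset$ and is dominated by $\phi$, which is Theorem \ref{theo: big seq}~(iv).
\item You handle the tie-breaking correctly by putting $g=\psi$ on $D\cap S'$.
\item You then recover $\widetilde D_k\subset D_k$ from the characterization $D_k=\{V_k=\psi\}\setminus S_k$ given by Theorem \ref{theo: big seq}~(ii).
\end{itemize}

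\textbf{What each route buys.} Your route is shorter and makes the induction over $k$ fully explicit, where the paper only says ``proceed similarly''. It also isolates the actual reason for the monotonicity, namely $V_k\le\phi$. The paper's route gives a stronger, step-by-step statement: the inner iterates of the two runs are already ordered at every $n$. That is informative about how the two initializations compare computationally, but it is not needed for the theorem as stated.
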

         
         The proof of of Theorem \ref{compare Sk and tilde Sk} is a consequence of the following lemmas.
         \begin{lemma} \label{V1V1}
         	We have $\widetilde D^{(n)}_1 \subset  D^{(n)}_1$ and that $V^{(n)}_1 \leq \widetilde V^{(n)}_1$ for all $n \in \mathbb N$. As a consequence $V_1 \leq \widetilde V_1$ and $\widetilde D_1 \subset D_1$.
         \end{lemma}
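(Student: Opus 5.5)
We will prove both claims together by induction on $n$, comparing the two inner iterations step by step. The key fact is that $V^{(n)}_1$ and $\widetilde V^{(n)}_1$ are payoffs $R_x(h(D),h(S))$ for hitting-time pairs. Both players' stopping sets move in the favourable direction when passing from the untilded to the tilded scheme: the sup-player's set shrinks ($\widetilde D^{(n)}_1\subset D^{(n)}_1$) and the inf-player's set grows ($S_1\subset\widetilde S_1$).

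\emph{Base case.} By definition $D^{(1)}_1=\{\Q[\psi]-\beta\psi\le 0\}\cap S_1^c$ and $\widetilde D^{(1)}_1=\{\Q[\psi]-\beta\psi\le 0\}\cap \widetilde S_1^c$. Since $S_1\subset\widetilde S_1$, we get $\widetilde D^{(1)}_1\subset D^{(1)}_1$.

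\emph{Comparison principle.} Fix $n$ and assume $\widetilde D^{(n)}_1\subset D^{(n)}_1$. We show $V^{(n)}_1\le\widetilde V^{(n)}_1$ by comparing the two functions directly. Put $\widetilde\eta\coloneqq h(\widetilde D^{(n)}_1\cup\widetilde S_1)$. Since $\widetilde D^{(n)}_1\cup\widetilde S_1$ is disjoint from $D^{(n)}_1\cup S_1$ only off $\widetilde D^{(n)}_1$, we argue pointwise on $\widetilde D^{(n)}_1\cup\widetilde S_1$:
\begin{itemize}
\item On $\widetilde D^{(n)}_1$, the inclusion $\widetilde D^{(n)}_1\subset D^{(n)}_1$ gives $\widetilde V^{(n)}_1=\psi=V^{(n)}_1$.
\item On $\widetilde S_1$, Lemma \ref{prop: obs 1} gives $\widetilde V^{(n)}_1=\phi\ge V^{(n)}_1$.
\end{itemize}
Off $\widetilde D^{(n)}_1\cup\widetilde S_1$, Lemma \ref{lem: Q-beta = 0} gives $\Q[\widetilde V^{(n)}_1]-\beta\widetilde V^{(n)}_1=0$. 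Hence $\widetilde V^{(n)}_1(x)=\E_x[\mathrm e^{-\beta\widetilde\eta}\widetilde V^{(n)}_1(X_{\widetilde\eta})]\ge \E_x[\mathrm e^{-\beta\widetilde\eta}V^{(n)}_1(X_{\widetilde\eta})]$. By Dynkin's formula, the right-hand side equals
\[V^{(n)}_1(x)+\E_x\left[\int_0^{\widetilde\eta}\mathrm e^{-\beta s}\big(\Q[V^{(n)}_1]-\beta V^{(n)}_1\big)(X_s)\,ds\right].\]
It remains to see that $\Q[V^{(n)}_1]-\beta V^{(n)}_1\ge0$ off $\widetilde D^{(n)}_1\cup\widetilde S_1$. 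This is the main obstacle, because it is false in general: on $D^{(n)}_1$ the quantity is $\le 0$.

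\emph{Resolving the obstacle.} We replace the Dynkin argument by the sup-characterisation. The game $\sup_\tau R_x(\tau,h(S))$ has value $V_1$ (respectively $\widetilde V_1$), attained by the iteration. Let $\widehat V\coloneqq\sup_\tau R_x(\tau,\widetilde\gamma_1)=\widetilde V_1$. Since enlarging the inf-player's set only removes her options, we expect $\widehat V\ge V_1$. This follows by taking $\tau=\tau_1$ and using $\phi\ge V_1$ on $\widetilde S_1\setminus S_1$: by Lemma \ref{prop: obs 1} and the strong Markov property at $\widetilde\gamma_1$, $R_x(\tau_1,\widetilde\gamma_1)\ge R_x(\tau_1,\gamma_1)$. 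That gives the consequence $V_1\le\widetilde V_1$ directly.

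For the finite-$n$ statements we instead run the induction using the \emph{sign} information. We show simultaneously that $\widetilde D^{(n+1)}_1\subset D^{(n+1)}_1$, which requires that on $\widetilde D^{(n)}_1$ the inequality $\Q[\widetilde V^{(n)}_1]-\beta\widetilde V^{(n)}_1\le 0$ forces $\Q[V^{(n)}_1]-\beta V^{(n)}_1\le 0$. At such $x$ both functions equal $\psi(x)$, so $\Q[\cdot](x)-\beta\psi(x)=\sum_{y\ne x}Q(x,y)(\cdot(y)-\psi(x))+\ldots$ is monotone in the off-diagonal values. Therefore $V^{(n)}_1\le\widetilde V^{(n)}_1$ gives $\Q[V^{(n)}_1](x)-\beta V^{(n)}_1(x)\le\Q[\widetilde V^{(n)}_1](x)-\beta\widetilde V^{(n)}_1(x)\le0$. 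The inequality $V^{(n)}_1\le\widetilde V^{(n)}_1$ itself we obtain by the strong Markov argument above at the hitting time of $\widetilde D^{(n)}_1\cup\widetilde S_1$. We handle the bad region $D^{(n)}_1\setminus\widetilde D^{(n)}_1$ by noting that it lies in $\widetilde S_1$ (where $\widetilde V=\phi\ge V$) or outside $\widetilde S_1$. In the latter case $\widetilde V^{(n)}_1>\psi=V^{(n)}_1$ by Proposition \ref{prop: Vn}(iii), so no Dynkin inequality is needed there. Finally, letting $n\to\infty$ gives $V_1\le\widetilde V_1$ and $\widetilde D_1\subset D_1$.
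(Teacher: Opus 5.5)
\textbf{Verdict: genuine gap in the finite-$n$ comparison step.} Your base case, the off-diagonal monotonicity argument giving $\widetilde D^{(n+1)}_1\subset D^{(n+1)}_1$ from $V^{(n)}_1\le\widetilde V^{(n)}_1$, and the sup-characterisation argument for $V_1\le\widetilde V_1$ are all correct. The last one is a genuinely different route to the limiting statement. Combined with $\{V_1=\psi\}=D_1\cup\{\phi=\psi\}$ and $\{\widetilde V_1=\psi\}=\widetilde D_1\cup\{\phi=\psi\}$, it even yields $\widetilde D_1\subset D_1$ directly.

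The gap is in deducing $V^{(n)}_1\le\widetilde V^{(n)}_1$ from $\widetilde D^{(n)}_1\subset D^{(n)}_1$, which is the finite-$n$ claim the lemma actually asserts. You say you run the strong Markov argument at $\widetilde\eta=h(\widetilde D^{(n)}_1\cup\widetilde S_1)$, but, as you yourself observed, this cannot work. For $n\ge 2$ the region $G\coloneqq D^{(n)}_1\setminus(\widetilde D^{(n)}_1\cup\widetilde S_1)$ may be non-empty, and the chain can pass through $G$ before $\widetilde\eta$. Dynkin's formula for $V^{(n)}_1$ up to $\widetilde\eta$ then carries the term $\E_x[\int_0^{\widetilde\eta}\mathrm e^{-\beta s}(\Q[V^{(n)}_1]-\beta V^{(n)}_1)(X_s)\mathbf 1_G(X_s)\,ds]$, whose sign is not controlled. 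Once the inner iteration has stabilised it is $\le 0$, which is the wrong sign. Noting that $\widetilde V^{(n)}_1\ge\psi=V^{(n)}_1$ at the points of $G$ does not remove this term. A pointwise inequality at a state enters the comparison only if the process is stopped when it enters that state, and $\widetilde\eta$ does not stop there.

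\textbf{The fix.} Stop at the larger set $C\coloneqq D^{(n)}_1\cup\widetilde S_1$, i.e.\ at $h(C)=\tau^{(n)}_1\wedge\widetilde\gamma_1$.
\begin{itemize}
\item Since $\widetilde D^{(n)}_1\cup\widetilde S_1\subset C$, Lemma \ref{SMP} applied to $\widetilde V^{(n)}_1$ gives $\widetilde V^{(n)}_1(x)=\E_x[\mathrm e^{-\beta h(C)}\widetilde V^{(n)}_1(X_{h(C)})]$ for $x\notin C$.
\item On $C$ you already have $\widetilde V^{(n)}_1\ge V^{(n)}_1$ pointwise. On $D^{(n)}_1$, which contains both $\widetilde D^{(n)}_1$ and $G$, this holds because $V^{(n)}_1=\psi\le\widetilde V^{(n)}_1$. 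On $\widetilde S_1$ it holds because $\widetilde V^{(n)}_1=\phi\ge V^{(n)}_1$.
\item Off $C$ you are off $D^{(n)}_1\cup S_1$, where $V^{(n)}_1$ is $\beta$-harmonic by Lemma \ref{lem: Q-beta = 0}. So Dynkin's formula gives $\E_x[\mathrm e^{-\beta h(C)}V^{(n)}_1(X_{h(C)})]=V^{(n)}_1(x)$ exactly, with no sign condition needed.
\end{itemize}
This is the paper's argument, and with this replacement your induction closes.
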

         \begin{proof} 
         	For $n=1$, recall that $\widetilde \tau^{(1)}_1 = h(\widetilde D^{(1)}_1) $, $\tau^{(1)}_1 = h(D^{(1)}_1) $,  $\gamma_1 = h(S_1)$ and $\widetilde \gamma_1 =  h(\widetilde S_1)$, where 
			\[ \widetilde D^{(1)}_1  = \{\Q[\psi] -\beta \psi \leq 0 \} \setminus \widetilde S_1  \subset \{\Q[\psi] -\beta \psi \leq 0 \} \setminus  S_1  =   D^{(1)}_1,\]
		and that 
			\[ \forall x \in E, \quad V^{(1)}_1(x) = R_x (\tau^{(1)}_1, \gamma_1), \quad \widetilde V^{(1)}_1(x) = R_x (\widetilde \tau^{(1)}_1, \widetilde \gamma_1). \]
		We are going to prove that $\widetilde V^{(1)}_1 \geq V^{(1)}_1$. Indeed, for  $y \in \widetilde S_1$, we have $\widetilde V^{(1)}_1(y) = \phi(y) \geq V^{(1)}_1(y)$ while for $ x \in \widetilde D^{(1)}_1$, we have $V^{(1)}_1(x) = \psi(x) = \widetilde V^{(1)}_1(x)$. It remains to prove $\widetilde V^{(1)}_1  \geq V^{(1)}_1$ outside the set $\widetilde D^{(1)}_1 \cup \widetilde S_1 = D^{(1)}_1 \cup \widetilde S_1$ (and so $ \widetilde \tau^{(1)}_1 \wedge \widetilde \gamma_1 = \tau^{(1)}_1 \wedge \widetilde \gamma_1 $). Consider one $x \notin D^{(1)}_1 \cup \widetilde S_1$. Then, applying Lemma \ref{SMP} yields
		\begin{align*}
		\widetilde V^{(1)}_1(x) &= \E_x[ \mathrm{e}^{-\beta ( \widetilde \tau^{(1)}_1 \wedge \widetilde \gamma_1) } \widetilde V^{(1)}_1(X_{\widetilde \tau^{(1)}_1 \wedge \widetilde \gamma_1}) ] \\
		&= \E_x[ \mathrm{e}^{-\beta (  \tau^{(1)}_1 \wedge \widetilde \gamma_1) } \widetilde V^{(1)}_1(X_{ \tau^{(1)}_1 \wedge \widetilde \gamma_1}) ] \\
		&\geq \E_x[ \mathrm{e}^{-\beta ( \tau^{(1)}_1 \wedge \widetilde \gamma_1) } V^{(1)}_1(X_{\tau^{(1)}_1 \wedge \widetilde \gamma_1}) ] \quad ( \text{since} \ \widetilde V^{(1)}_1 \geq  V^{(1)}_1 \ \text{on} \ D^{(1)}_1 \cup \widetilde S_1 )\\
		&=  V^{(1)}_1(x) +  \E_x \left[ \int^{\tau^{(1)}_1 \wedge \widetilde \gamma_1}_0 \mathrm{e}^{-\beta u} \Big( \Q[V^{(1)}_1](X_u) - \beta V^{(1)}_1 (X_u)\Big)du \right]\\
		&= V^{(1)}_1(x),
		\end{align*}
	because $ \Q[V^{(1)}_1] - \beta V^{(1)}_1 = 0$ outside $ D^{(1)}_1 \cup  S_1 \subset D^{(1)}_1 \cup \widetilde S_1 $. This ends the proof that $\widetilde V^{(1)}_1 \geq  V^{(1)}_1$. \\
	\indent Suppose that the claim of the lemma is true for $n$, we prove it is true for $n+1$. First, recall 
		\[  \widetilde D^{(n+1)}_1 = \{ \Q[\widetilde V^{(n)}_1] - \beta \widetilde V^{(n)}_1 \leq 0 \} \cap \widetilde D^{(n)}_1, \quad  D^{(n+1)}_1 = \{ \Q[V^{(n)}_1] - \beta  V^{(n)}_1 \leq 0 \} \cap  D^{(n)}_1. \]
		Let $x \in \widetilde D^{(n+1)}_1$. Then $x \in \widetilde D^{(n)}_1 \subset D^{(n)}_1 $ and consequently $\widetilde V^{(n)}_1(x) = \psi(x) =  V^{(n)}_1(x)$. We also have 
		\begin{align*}
			0& \geq  \Q[ \widetilde V^{(n)}_1](x) -\beta \widetilde V^{(n)}_1(x)  \\
			&= \sum_{y \neq x} Q(x,y)\widetilde V^{(n)}_1(y) + (Q(x,x) -\beta)  \widetilde V^{(n)}_1(x)  \\
			&=  \sum_{y \neq x} Q(x,y)\widetilde V^{(n)}_1(y) + (Q(x,x) -\beta) \psi(x) \\
			&\geq  \sum_{y \neq x} Q(x,y)V^{(n)}_1(y) + (Q(x,x) -\beta) V^{(n)}_1(x) \\
			&= \Q[ V^{(n)}_1](x) -\beta  V^{(n)}_1(x),
		\end{align*}
		which shows $x \in \{ \Q[V^{(n)}_1] - \beta  V^{(n)}_1 \leq 0 \}$, and hence $\widetilde D^{(n+1)}_1 \subset D^{(n+1)}_1 $. The proof of $\widetilde V_1^{(n+1)} \geq V_1^{(n+1)}$ follows similarly as in the case $n=1$. First, consider $x \in D^{(n+1)}_1$, then $\widetilde V_1^{(n+1)} (x) \geq \psi(x) = V_1^{(n+1)}(x) $ (the proof of $\widetilde V_1^{(n)}  \geq \psi$ follows similarly in  Lemma \ref{cor: direct of prop obs 1}  and Proposition \ref{prop: Vn}). If $x \in \widetilde S_1$, then $ \widetilde V_1^{(n+1)} (x) = \phi(x) \geq V_1^{(n+1)} (x)$. It remains to show that $\widetilde V_1^{(n+1)} \geq V_1^{(n+1)}$ outside $D^{(n+1)}_1 \cup \widetilde S_1$. Consider one $x \notin  D^{(n+1)}_1 \cup \widetilde S_1$, we have
			\[ \widetilde \tau^{(n+1)}_1 \wedge \widetilde \gamma_1 \geq  \tau^{(n+1)}_1 \wedge \widetilde \gamma_1  \quad \mathbb P_x-\text{a.s.}\]
Applying Lemma \ref{SMP} yields
		\begin{align*}
		\widetilde V^{(n+1)}_1(x) &= \E_x[ \mathrm{e}^{-\beta ( \widetilde \tau^{(n+1)}_1 \wedge \widetilde \gamma_1) } \widetilde V^{(n+1)}_1(X_{\widetilde \tau^{(n+1)}_1 \wedge \widetilde \gamma_1}) ] \\
		&= \E_x[ \mathrm{e}^{-\beta (  \tau^{(n+1)}_1 \wedge \widetilde \gamma_1) } \widetilde V^{(n+1)}_1(X_{ \tau^{(n+1)}_1 \wedge \widetilde \gamma_1}) ] \\
		&\geq \E_x[ \mathrm{e}^{-\beta ( \tau^{(n+1)}_1 \wedge \widetilde \gamma_1) } V^{(n+1)}_1(X_{\tau^{(n+1)}_1 \wedge \widetilde \gamma_1}) ] \quad ( \text{since} \ \widetilde V^{(n+1)}_1 \geq  V^{(n+1)}_1 \ \text{on} \ D^{(n+1)}_1 \cup \widetilde S_1 )\\
		&=  V^{(n+1)}_1(x) +  \E_x \left[ \int^{\tau^{(n+1)}_1 \wedge \widetilde \gamma_1}_0 \mathrm{e}^{-\beta u} \Big( \Q[V^{(n+1)}_1](X_u) - \beta V^{(n+1)}_1 (X_u)\Big)du \right]\\
		&= V^{(n+1)}_1(x),
		\end{align*}
because $ \Q[V^{(n+1)}_1] - \beta V^{(n+1)}_1 = 0$ outside $ D^{(n+1)}_1 \cup  S_1 \subset D^{(n+1)}_1 \cup \widetilde S_1 $. This establishes that $\widetilde V_1^{(n+1)} \geq V_1^{(n+1)}$, thereby completing the induction step.

		Therefore, we have shown that $\widetilde V_1^{(n)} \geq V_1^{(n)}$ for all $n \in \mathbb N$. Letting $n \rightarrow + \infty$, we conclude that $\widetilde V_1 \geq V_1$ and $\widetilde D_1 \subset D_1$. 
		         \end{proof}
         \begin{lemma}
         	We have $S_2 \subset \widetilde  S_2$.
         \end{lemma}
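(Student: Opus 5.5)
The plan is to unwind the two definitions from \eqref{S_k} and compare them pointwise, using Lemma \ref{V1V1} together with the sign structure of the generator $\Q$. By definition,
\[
S_2 = \Big(S_1 \cap \{ \Q[V_1] - \beta V_1 \geq 0 \}\Big) \cup \{\phi = \psi\}, \qquad \widetilde S_2 = \Big(\widetilde S_1 \cap \{ \Q[\widetilde V_1] - \beta \widetilde V_1 \geq 0 \}\Big) \cup \{\phi = \psi\}.
\]
Since $\{\phi=\psi\}$ is contained in both sets, it suffices to take $x \in S_1$ with $\Q[V_1](x) - \beta V_1(x) \geq 0$. For such an $x$ we must show that $x \in \widetilde S_1$ and $\Q[\widetilde V_1](x) - \beta \widetilde V_1(x) \geq 0$.

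The first membership is immediate from $S_1 \subset \widetilde S_1$. For the second, I first check that $V_1$ and $\widetilde V_1$ agree at $x$. By Theorem \ref{theo: V1 properties}(i), $V_1 = \phi$ on $S_1$. Its analogue for the tilde sequence, granted by Proposition \ref{S' to initialize}, gives $\widetilde V_1 = \phi$ on $\widetilde S_1$. Hence $V_1(x) = \widetilde V_1(x) = \phi(x)$.

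Then I expand the generator, exactly as in the proof of Lemma \ref{V1V1}:
\begin{align*}
\Q[\widetilde V_1](x) - \beta \widetilde V_1(x) &= \sum_{y\neq x} Q(x,y)\widetilde V_1(y) + (Q(x,x)-\beta)\phi(x) \\
&\geq \sum_{y\neq x} Q(x,y) V_1(y) + (Q(x,x)-\beta)V_1(x) \\
&= \Q[V_1](x) - \beta V_1(x) \geq 0.
\end{align*}
The inequality in the middle line uses two facts: $Q(x,y) \geq 0$ for $y \neq x$, and $\widetilde V_1 \geq V_1$ everywhere, which is Lemma \ref{V1V1}. This gives $x \in \widetilde S_2$ and hence $S_2 \subset \widetilde S_2$.

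There is no serious obstacle. The only point to watch is the diagonal term: the comparison goes through only because the two functions coincide at $x$ itself, so that the possibly negative coefficient $Q(x,x)-\beta$ does not reverse the inequality. This is exactly why we need $x \in S_1 \subset \widetilde S_1$, where both functions equal $\phi$.
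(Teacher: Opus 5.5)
Your proposal is correct and is essentially the paper's proof. Both arguments use $S_1\subset\widetilde S_1$ to get $V_1(x)=\widetilde V_1(x)=\phi(x)$, then expand $\Q[\cdot](x)-\beta\,\cdot(x)$ and conclude from $Q(x,y)\geq 0$ for $y\neq x$ together with $V_1\leq\widetilde V_1$ (Lemma \ref{V1V1}). Your explicit treatment of the union with $\{\phi=\psi\}$, which the paper drops from its displayed definitions of $S_2,\widetilde S_2$, is a harmless and slightly more careful addition.
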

         \begin{proof}
         	We first recall that 
		\begin{align}
			S_2 = \{\Q[V_1] -\beta V_1 \geq 0 \} \cap S_1, \quad \widetilde S_2 = \{\Q[\widetilde V_1] -\beta \widetilde V_1 \geq 0 \} \cap \widetilde S_1
		\end{align}
		Let $x \in S_2$. Then $x \in S_1 \subset \widetilde S_1$ and hence $V_1(x) = \phi(x) = \widetilde V_1(x)$. Also, we have 
			\begin{align*}
				0 &\leq \Q[V_1](x) -\beta V_1(x) = \sum_{y \neq x } Q(x,y)V_1(y) +(Q(x,x)-\beta) V_1(x)\\
				&= \sum_{y \neq x } Q(x,y)V_1(y) +(Q(x,x)-\beta) \phi(x) \\
				&\leq \sum_{y \neq x } Q(x,y)\widetilde V_1(y) +(Q(x,x)-\beta) \widetilde V_1(x) \quad (\text{since} \  V_1 \leq  \widetilde V_1 \  \text{in Lemma \ref{V1V1}}) \\
				&= \Q[\widetilde V_1](x) -\beta \widetilde V_1(x).
			\end{align*}
		Therefore, $x \in \{ \Q[\widetilde V_1] -\beta \widetilde V_1 \geq 0\}$ and the claim follows. 
         \end{proof}
         \begin{proof}[Proof of Theorem \ref{compare Sk and tilde Sk}]
         The proof now follows by induction, using the same ideas and techniques as those employed in the preceding lemmas: first show that $\widetilde D_2 \subset D_2$ and $\widetilde V_2 \geq V_2$, then proceed similarly for $k =3,4, \dots$.
         \end{proof}

    \subsubsection{A way to create examples that have many NE in the case $\{ V_0 > \phi \} \neq \emptyset$.}
    
       In Proposition \ref{S' to initialize}, we proved that $\widetilde V_\infty = V_\infty = V$, so that
        \begin{align} \label{forquote}
            \forall x \in E,  \quad   R_x(\tau_\infty \wedge \tau_{\phi,\psi},  \gamma_\infty)  = V_\infty (x) = \widetilde V_\infty(x) = R_x(\tau_\infty \wedge \tau_{\phi,\psi},  \widetilde \gamma_\infty),
        \end{align}
        and that the two pairs $ (\tau_\infty \wedge \tau_{\psi,\phi}, \gamma_\infty)$ and $(\tau_\infty \wedge \tau_{\psi,\phi}, \widetilde \gamma_\infty)$ are all NE. Note that $\widetilde \gamma_\infty \leq \gamma_\infty$ because $S_\infty \subset \widetilde S_\infty$ and the two stopping times are different when the inclusion is strict. Although we have examples where the inclusion $S_\infty \subset \widetilde S_\infty$ is strict, thus yielding multiple NE, one may still wonder whether there is a systematic way to construct other stopping times $\gamma$ that preserve the property that $(\tau_\infty \wedge \tau_{\psi,\phi}, \gamma)$ is a NE. In this subsection, we aim to address this question. First, we provide an intuitive justification for the possibility of multiple NE, without invoking Proposition \ref{S' to initialize}. Next, we offer a constructive approach for demonstrating examples where more than one NE arises. 
        
        Let us begin with the first objective. For notational convenience, we set $D \coloneqq  D_\infty$, and $S \coloneqq  S_\infty \cap \{ \phi > \psi \}$, and note that $D \cap S = \emptyset$. We get
            \begin{align*}
                \forall x \in E, \quad V_\infty(x) = R_x(\tau_\infty \wedge \tau_{\phi,\psi}, \gamma_\infty) = R_x\Big(h(D \cup \{ \phi = \psi\}),  h(S \cup \{ \phi = \psi\}) \Big).
            \end{align*}
        From Theorem \ref{theo: main}, we know $V_\infty \leq \phi$, with $V_\infty = \phi$ on $S_\infty = S \cup \{ \phi = \psi \}$ and $V = \psi < \phi$ on $D$. However, outside the set $D \cup  S \cup \{ \phi = \psi\}$, no further information is available to determine whether the inequality $V_\infty \leq \phi$ is strict. Thus, perhaps there exists a non-empty set $O$, disjoint from $ D \cup  S \cup \{ \phi = \psi\} $, such that $V_\infty = \phi$ on $O$. If this is the case, we claim that the following function
            \begin{align*}
                \forall x \in E, \quad V_O(x) \coloneqq  R_x\Big(h(D \cup \{ \phi = \psi\}),  h(O \cup S \cup \{ \phi = \psi\}) \Big),
            \end{align*}
        coincides with $V_\infty$. Indeed, let $\Delta_1 \coloneqq  h(D \cup \{ \phi = \psi\})$ and $ \Delta_2 \coloneqq  h(O \cup S \cup \{ \phi = \psi\})$, then $\forall x \in E$,
            \begin{align*}
                V_O(x)  &= \E_x \left[\mathrm{e}^{-\beta \Delta_1} \psi(X_{\Delta_1}) \mathbf{1}_{\{\Delta_1 \leq \Delta_2\}} + \mathrm{e}^{-\beta \Delta_2} \phi(X_{\Delta_2}) \mathbf{1}_{\{\Delta_1 > \Delta_2\}}  \right] \\
                        &= \E_x \left[\mathrm{e}^{-\beta \Delta_1} V_\infty (X_{\Delta_1}) \mathbf{1}_{\{\Delta_1 \leq \Delta_2\}} + \mathrm{e}^{-\beta \Delta_2} V_\infty(X_{\Delta_2}) \mathbf{1}_{\{\Delta_1 > \Delta_2\}}  \right] \\
                        &= \E_x \left[\mathrm{e}^{-\beta (\Delta_1 \wedge \Delta_2)} V_\infty(X_{\Delta_1 \wedge \Delta_2}) \right] \\
                        &= V_\infty(x) + \E_x\left[ \int^{\Delta_1 \wedge \Delta_2}_0 \mathrm{e}^{-\beta s} \Big( \Q[V_\infty](X_s) - \beta V_\infty(X_s)\Big) ds  \right] \\
                        &= V_\infty(x) \label{fforcite} \numberthis
            \end{align*}
        because $ \Q[V_\infty] - \beta V_\infty = 0$ outside $D \cup S \cup O \cup \{ \phi = \psi \}$. Next, we claim that any set $O$ disjoint from $D \cup S \cup \{ \phi = \psi \}$ and satisfying $V_\infty(x) = \phi$ for all $x \in O$ gives rise to a NE pair 
        \[\Big(h(D \cup \{ \phi = \psi\}),  h(O \cup S \cup \{ \phi = \psi\}) \Big).\]
        
        At this point, we emphasize that a pair of stopping times $(\tau,\gamma)$ may yield the correct the value function, i.e. $V(x) = R_x(\tau,\gamma)$, without necessarily forming a NE. To verify that the pair $\Big(h(D \cup \{ \phi = \psi\}),  h(O \cup S \cup \{ \phi = \psi\}) \Big)$ does in fact constitute a NE, we observe
            \begin{align*}
                \forall \gamma, \qquad  R_x(h(D \cup \{ \phi = \psi\}),  \gamma ) \geq V_\infty(x) = V_O(x) = R_x\Big(h(D \cup \{ \phi = \psi\}), h(O \cup S \cup \{ \phi = \psi\}) \Big),
            \end{align*}
        and from $\psi \leq V_\infty$, $V_\infty = \phi$ on $O \cup S \cup \{ \phi = \psi \}$, we use Dynkin's formula to get
            \begin{align*}
                \forall \tau, \qquad  R_x(\tau,  O \cup S \cup \{ \phi = \psi\} ) &\leq V_\infty(x) + \E_x \left[\int^{\tau \wedge h(O\cup S \cup \{ \phi = \psi\})}_0 \mathrm{e}^{-\beta s}\Big(\Q[V_\infty](X_s) - \beta V_\infty (X_s) \Big)ds \right] \\
                &\leq V_\infty(x) = V_O(x),
            \end{align*}
        which shows the claim.
        
        Now suppose that $ |  \{ V =\phi \} \setminus  S_\infty | \geq 1$. (Note that $ \{ V =\phi \}  = \widetilde S_\infty$, but here we are intentionally disregarding the construction of $\widetilde S_\infty$ and Proposition \ref{S' to initialize}). If we take any non empty subset $O \subset\{ V =\phi \}  \setminus S_\infty$, then, from the previous analysis, we get $V_O=  V_\infty$ and the pair
        \[\Big(h(D \cup \{ \phi = \psi\}),  h(O \cup S \cup \{ \phi = \psi\}) \Big) \]
constitutes a NE. In particular, this demonstrates that there are multiple Nash equilibria in this case, thereby disproving the intuitive expectation that the NE might be unique. 

	Returning briefly to Proposition \ref{S' to initialize}, suppose that 	
\[ | \{ V =\phi \} \setminus  S_\infty | = |\widetilde S_\infty \setminus  S_\infty | \geq 2.\]
Then  we can, in fact,  choose $O$ such that $h(O \cup S \cup \{ \phi = \psi\}) $ differs from both $\gamma_\infty$ and $\widetilde \gamma_\infty$. Indeed, if $ | \widetilde S_\infty \setminus  S_\infty | \leq 1$, then any subset $O \subset \widetilde S_\infty \setminus  S_\infty$ is either empty or equal to $ \widetilde S_\infty \setminus  S_\infty$, in which case the resulting stopping time coincides with either $\gamma_\infty$ or $\widetilde \gamma_\infty$, respectively.
        
	The preceding discussion offers only an intuitive justification for the possibility of multiple NE without appealing to Proposition \ref{S' to initialize}, as the existence of a set $O$ satisfying the necessary conditions has remained a mystery until now. In general, it may happen that $S_\infty = \widetilde S_\infty$ even though $S_1 \neq \widetilde S_1$. Consequently, no such $O$ can exist. Therefore, we need a more concrete method to construct examples with multiple NE without requiring the knowledge of the set $\widetilde S_\infty \setminus S_\infty$ as discussed above. Such a construction is provided in the following proposition.
	
        \begin{proposition} \label{prop: constructive}
            Let $V$ be the value function for the pair $(\psi,\phi)$ and let $I$ be a subset of $ \{ \psi < V < \phi \}$, where we assume that $ \{ \psi < V < \phi \} \neq \emptyset$. We define the function $\phi_c$ by 
                \begin{align*}
                    \forall x \in I, \quad \phi_c(x) \coloneqq  V(x)  \qquad \text{and} \qquad \forall x \notin I, \quad \phi_c(x) \coloneqq  \phi(x),
                \end{align*}
            and let $V_c$ be the value function for the corresponding pair $(\psi, \phi_c)$ under the new expected payoff $R^c_x(\cdot,\cdot)$ (with $\phi$ replaced by $\phi_c$ in \eqref{expected payoff}). Then it holds that $V_c = V$.
        \end{proposition}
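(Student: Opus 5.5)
The plan is to apply the verification result, Theorem \ref{theo: sufficient for NE}, to the new pair $(\psi,\phi_c)$ with $\widetilde V \coloneqq V$. This yields $V_c = V$ directly. By Corollary \ref{cor: necessity part}, or more concretely by Theorem \ref{theo: main} together with Proposition \ref{S' to initialize}, the original value function $V$ comes with disjoint sets $A$ and $B$, both disjoint from $\{\phi=\psi\}$, satisfying \eqref{conditions for a NE} for $(\psi,\phi)$. I would take $A = \{V=\psi\}\cap\{\phi>\psi\}$ and $B=\{V=\phi\}\cap\{\phi>\psi\}$. The remaining task is to produce sets $A_c,B_c$ that satisfy \eqref{conditions for a NE} for $(\psi,\phi_c)$ with the same function $V$.

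First I record the elementary facts about $\phi_c$. Since $I\subset\{\psi<V<\phi\}$, we have $\psi<\phi_c=V<\phi$ on $I$, and $\phi_c=\phi$ off $I$. Hence $\psi\le\phi_c\le\phi$, and the coincidence set is unchanged: $\{\phi_c=\psi\}=\{\phi=\psi\}$. Moreover $\psi\le V\le\phi_c$ everywhere, because on $I$ the bound holds with equality and off $I$ it is the old bound. I then set $A_c\coloneqq A$ and $B_c\coloneqq B\cup I$. These sets are disjoint, since $I\cap(A\cup B)=\emptyset$ because $\psi<V<\phi$ on $I$. They are also disjoint from $\{\phi_c=\psi\}=\{\phi=\psi\}$.

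Next I check the four conditions of \eqref{conditions for a NE} for $(\psi,\phi_c)$ and $\widetilde V = V$, one by one.
\begin{enumerate}
\item On $A_c=A$ the requirements $\Q[V]-\beta V\le 0$ and $V=\psi$ are inherited unchanged.
\item On $B$ we have $V=\phi=\phi_c$ (since $B\cap I=\emptyset$) and $\Q[V]-\beta V\ge 0$, both inherited.
\item On $I$ we have $V=\phi_c$ by definition. The point $x\in I$ lies in $\{\psi<V<\phi\}$, which is outside $A\cup B\cup\{\phi=\psi\}$, so the old third condition gives $\Q[V](x)-\beta V(x)=0\ge0$.
\item Outside $A_c\cup B_c\cup\{\phi_c=\psi\}$, which is a subset of the complement of $A\cup B\cup\{\phi=\psi\}$, the equation $\Q[V]-\beta V=0$ is inherited.
\end{enumerate}
Together with the sandwich $\psi\le V\le\phi_c$ shown above, Theorem \ref{theo: sufficient for NE} then gives that $V$ is the value of the $(\psi,\phi_c)$ game, i.e. $V_c=V$. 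As a by-product, $\big(h(A\cup\{\phi=\psi\}),h(B\cup I\cup\{\phi=\psi\})\big)$ is a NE for the new game.

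The only delicate step is justifying the existence of $A$ and $B$ with exactly the stated sign properties, in particular that $\Q[V]-\beta V=0$ on $\{\psi<V<\phi\}$. I would take this from Theorem \ref{theo: main} applied to the tilde construction of Proposition \ref{S' to initialize}. There $\widetilde D_\infty=\{V=\psi\}\setminus\{\phi=\psi\}$ and $\widetilde S_\infty\cap\{\phi>\psi\}=\{V=\phi\}\cap\{\phi>\psi\}$, and Theorem \ref{theo: big seq} (iii) gives $\Q[V]-\beta V=0$ outside $\widetilde D_\infty\cup\widetilde S_\infty$. That region is precisely $\{\psi<V<\phi\}$, which contains $I$.
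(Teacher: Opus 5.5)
Your proposal is correct and is essentially the paper's argument: you apply Theorem \ref{theo: sufficient for NE} to $(\psi,\phi_c)$ with $\widetilde V=V$, using sets $A,B$ from Corollary \ref{cor: necessity part} together with $\psi\le V\le\phi_c$, and your explicit checks that $\{\phi_c=\psi\}=\{\phi=\psi\}$ and $B\cap I=\emptyset$ fill in what the paper leaves implicit. The paper keeps $A,B$ unchanged rather than enlarging $B$ by $I$, and your detour through Proposition \ref{S' to initialize} is unnecessary: any $A,B$ from the Corollary satisfy $A\subset\{V=\psi\}$ and $B\subset\{V=\phi\}$, so $\{\psi<V<\phi\}$ automatically lies outside $A\cup B\cup\{\phi=\psi\}$, where $\Q[V]-\beta V=0$.
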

        \begin{proof}
            By the necessity part of Theorem \ref{theo: sufficient for NE} in Corollary \ref{cor: necessity part}, $V$ satisfies the conditions \eqref{conditions for a NE}, but $V \leq \phi_c$ by construction so  the last condition in \eqref{conditions for a NE} can be replaced by $\psi \leq V \leq \phi_c$. But this implies $V = V_c$ again by Theorem \ref{theo: sufficient for NE}.
        \end{proof}
        
        Consider the case $\phi > \psi$. Applying Theorem \ref{theo: sufficient for NE}, we can easily verify that both pairs $h(\{V =\psi \}), h( \{V= \phi \}))$ and $h(\{V =\psi \}), h( \{V= \phi \} \cup I))$ are both NE for the game with functions $(\psi,\phi_c)$. Also note that $\{V= \phi \} \cup I = \{V= \phi_c \}$.
        
           \subsubsection{Computational aspects.}
	We discuss the computational aspects of our algorithm. By closely examining each iteration involved in computing a function $V_k$, we always start with the set $D^{(1)}_k = \{ \Q[\psi] - \beta \psi \leq 0 \} \setminus S_k$ and iteratively shrink this set until no further reduction is possible (potentially resulting in an empty set). The number of steps required to compute $V_k$ is thus bounded above by the cardinality of the set $\{\Q[\psi] - \beta \psi \leq 0\}$. To clarify, by "iteration," we refer to the index $k$ in the sequence $(V_k)_{k\geq 0}$, while "steps" refer to each update in the sequence $V_{k}^{(n)}$ for each $n=1,2,\dots$.
        
         Additionally, the sequence $(S_k)_{k \geq 1}$ decreases from $S_1 = \{ V_0 > \phi \} \cup \{ \phi = \psi \}$. Notably, by definition, the set $\{ \phi = \psi \}$ is always included in each member of this sequence. Therefore, we are "only" shrinking the set $\{ V_0 > \phi \}$, which is disjoint from $\{ \phi = \psi \}$. If we used $\widetilde S_1 = \{ V_0 \geq \phi \}$ instead, the set we shrink would be $ \widetilde S_1\setminus \{ \phi = \psi \} = \{ V \geq \phi > \psi \}$. In total, if  $\{V_0 > \phi \} \neq \emptyset$, the total number of steps, summed over all iterations required to compute $V$ is bounded above by 
            \begin{align*}
                |\{\Q[\psi] - \beta \psi \leq 0 \} | \times \big(| \{ V_0 > \phi \} | +1 \big) \leq |E|^2,
            \end{align*}
        where $|\cdot|$ denotes the cardinality and $| \{ V_0 > \phi \} | +1$ is an upper bound for the number of iterations. The additional plus 1 in the above inequality arises from the need for at least  $|\{\Q[\psi] - \beta \psi \leq 0 \} |$ iterations to compute $V_0$ and then check if $\{V_0 > \phi \} \neq \emptyset$.
            For example, consider the case where $\phi$ is strictly greater than $\psi$, i.e. $\{ \phi = \psi \}  = \emptyset$ and $\{ V_0 > \phi \} = \{ x_0 \}$ for some $x_0 \in E$. Then the function $V_1$ satisfies $\psi \leq V_1 \leq \phi$ and
                \begin{align*}
                    \forall x \in D_1, \quad \Q[V_1](x) - \beta V_1(x) \leq 0, \qquad  \forall x \notin D_1 \cup \{ x_0 \},\quad  \Q[V_1](x) - \beta V_1(x) = 0.
                \end{align*}
            We claim the $V_1$ is the value function, i.e. $V_1 = V$. To see this, observe that $\Q[V_1](x_0) - \beta V_1(x_0) > 0$ must hold. If this were not the case, then $V_1$ would be $\beta$-excessive and dominate $\psi$, which would in turn imply that $V_0 \leq V_1 \leq V_0$ (recall that $V_0$ is the smallest $\beta$-excessive function dominating $\psi$), leading to the conclusion $V_0 = V_1$. This is impossible because $V_0(x_0) > \phi(x_0) = V_1(x_0)$. By applying Theorem \ref{theo: sufficient for NE}, we conclude that $V = V_1$. Therefore, in this case, less than $2 \times | \{ \Q[\psi] - \beta \psi \leq 0 \} | \leq 2 |E|$ steps are required to compute $V_1 = V$.

        Finally, the following result demonstrates that if the players choose the NE $(h(\{V = \psi\}), h(\{V = \phi\}))$ as their strategies, then the game always stops after a finite period of time by the sup-player, if not by the inf-player. 
            \begin{theorem}
                For all $x \in E$, $\mathbb P_x( h(\{V = \psi\}) < + \infty ) = 1$.
            \end{theorem}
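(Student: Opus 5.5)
Mirroring the proof of Theorem \ref{ends after finite time}, I will use the recurrent classes $F_1,\dots,F_d$ of $X$, into which the chain is absorbed eventually $\mathbb P_x$-a.s. for every $x\in E$. In each class $F_i$ the process reaches every state of $F_i$ almost surely. So it suffices to show that every recurrent class $F_i$ meets the set $\{V=\psi\}$. Indeed, once $X$ enters $F_i$, it will hit any fixed state of $F_i\cap\{V=\psi\}$ in finite time almost surely, and the strong Markov property at the entrance time into $\bigcup_i F_i$ then gives $\mathbb P_x(h(\{V=\psi\})<+\infty)=1$.

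To show $F_i\cap\{V=\psi\}\neq\emptyset$, I argue by contradiction. Fix $i$ and suppose $V>\psi$ on all of $F_i$. Then $F_i\cap\{\phi=\psi\}=\emptyset$, since there $\psi\le V\le\phi=\psi$. I will use the characterization from Corollary \ref{cor: necessity part} together with Theorem \ref{theo: sufficient for NE}. These give $\Q[V]-\beta V\ge 0$ at points of $\{V=\phi\}\setminus\{\phi=\psi\}$ and $\Q[V]-\beta V=0$ on $\{\psi<V<\phi\}$. Hence $\Q[V]-\beta V\ge 0$ on all of $F_i$.

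The next step is a maximum-principle computation on $F_i$. Pick $x_0\in\argmax_{F_i} V$. Since $F_i$ is closed, $Q(x_0,y)=0$ for $y\notin F_i$. Using $\sum_y Q(x_0,y)=0$ and $V(y)\le V(x_0)$ for $y\in F_i$, we get $\Q[V](x_0)\le 0$, and therefore
\[
0\le \Q[V](x_0)-\beta V(x_0)\le -\beta V(x_0).
\]
This forces $V(x_0)\le 0$. But $V(x_0)>\psi(x_0)\ge 0$, which is a contradiction.

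The only delicate point is making sure the sign conditions on $\Q[V]-\beta V$ are available on all of $F_i$. The one place the paper leaves the sign undetermined is $\{\phi=\psi\}$, and that set was excluded above precisely because $V>\psi$ on $F_i$. The remaining steps are routine: absorption into recurrent classes and almost sure hitting of every state inside an irreducible finite recurrent class.
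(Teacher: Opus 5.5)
Your proof is correct, but it takes a different route from the paper's. The paper's proof is two lines. Since $\psi\le V\le V_0$ (Theorems \ref{theo: big seq} and \ref{theo: main}), one has $\{V_0=\psi\}\subset\{V=\psi\}$, and the claim then follows from Theorem \ref{ends after finite time}. All the recurrent-class work is done in that earlier theorem, by an induction along the one-player forward algorithm showing $\argmax_{F_i}\psi\subset C_n$ for every $n$.

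You bypass $V_0$ and that induction entirely. From Corollary \ref{cor: necessity part} you extract that $\Q[V]-\beta V\ge 0$ on $\{V>\psi\}$. This uses the small check that a point of $\{V=\phi\}\setminus\{\phi=\psi\}$ lies either in $B$ or outside $A\cup B\cup\{\phi=\psi\}$, which holds because $V=\psi<\phi$ on $A$. A discrete maximum principle on each closed class, together with $\psi\ge 0$, then forces the contradiction.

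The paper's version is shorter because it reuses a hitting result already proved and needs only the comparison $\psi\le V\le V_0$. Yours works directly with the sign structure of $\Q[V]-\beta V$ and does not depend on the forward algorithm for $V_0$. It also gives slightly more than stated. Your maximum-principle step only uses the sign at the maximizer $x_0$, so it shows that every maximizer of $V$ on a recurrent class lies in $\{V=\psi\}$. Finally, the same argument applied to $V_0$, using Theorem \ref{theo: recall}, reproves Theorem \ref{ends after finite time} itself.
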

            \begin{proof}
                Since $ \psi \leq V \leq V_0$ from Theorem \ref{theo: big seq} and \ref{theo: main}, we have $\{V_0 = \psi \} \subset \{ V = \psi \}$. By Theorem \ref{ends after finite time}, we must have for all $x \in E$, $\mathbb P_x( h(\{V = \psi\}) < + \infty )  = 1$.
            \end{proof}
            
            
\section{Examples} \label{sec:examples}
	All of the examples presented in this section, along with the corresponding Python code, are available at the following link: https://github.com/nhatthangle/Markov-game.
    \subsection{Example 1: A birth-death process on $\{0,...,N-1\}$ with reflecting endpoints}
        In this example, we consider a birth-death process on the state space $\{0,1,...,N-1 \}$ with reflecting endpoints. The generator is given by 
            \begin{align*}
               \forall i \in \{ 1,...,N-2\}, \quad  &Q(i,i+1) = Q(0,1) =  \lambda, \quad Q(i,i-1) = Q(N-1,N-2) = r
            \end{align*}
        \subsubsection{Subexample 1.1.}
        We first consider the first sub-example, where the two function $\psi$ and $\phi$ are given by 
            \begin{align*}
                \forall x \in \{0,...,N-1 \}, \quad  \psi(x) = 10 + x/4 + 3 \cos(x) + 2 \sin (x/2), \qquad \text{and} \quad \phi = \psi +3 
            \end{align*}
        For the choice of 
        	\[N = 50, \quad \beta = 0.1, \quad \lambda = 40, \quad r = 28, \]
we have the following evolution of functions given in Figure \ref{fig: testfuncs}, where the algorithm stops after calculating $V_3 = V$. Note that all the functions $(V_k)_{k \geq 0}$ lie between $\psi$ and $\phi$, except $V_0$.
        \\
           The evolution of $(D_k)_{k = 1,2,3}$ and $(S_k)_{k = 1 ,2 ,3}$ are given by 
            \begin{align*}
                D_1 &= \{31, 38, 44, 49\},\\
                D_2 &= \{6, 7, 13, 18, 19, 25, 26, 31, 32, 38, 44, 49\}, \\
                D_3 &= \{0, 6, 7, 13, 18, 19, 25, 26, 31, 32, 38, 44, 49\}, \\
                S_1 &= \{0, 1, 2, 3, 4, 5, 7, 8, 9, 10, 11, 12, 14, 15, 16, 17, 20, 21,\\
                	& \hspace{3cm}  22, 23, 24, 27, 28, 29, 33, 34, 35, 36, 40, 41, 46, 47, 48\}, \\
                S_2 &=  \{2, 3, 4, 9, 10, 11, 15, 16, 21, 22, 23, 28, 34, 35, 41, 47\}, \\
                S_3 &=  \{3, 9, 10, 16, 22, 23, 28, 34, 35, 41, 47\}.
            \end{align*}
            \begin{figure}[t]
 		 \centering
 		 \includegraphics[width=\linewidth]{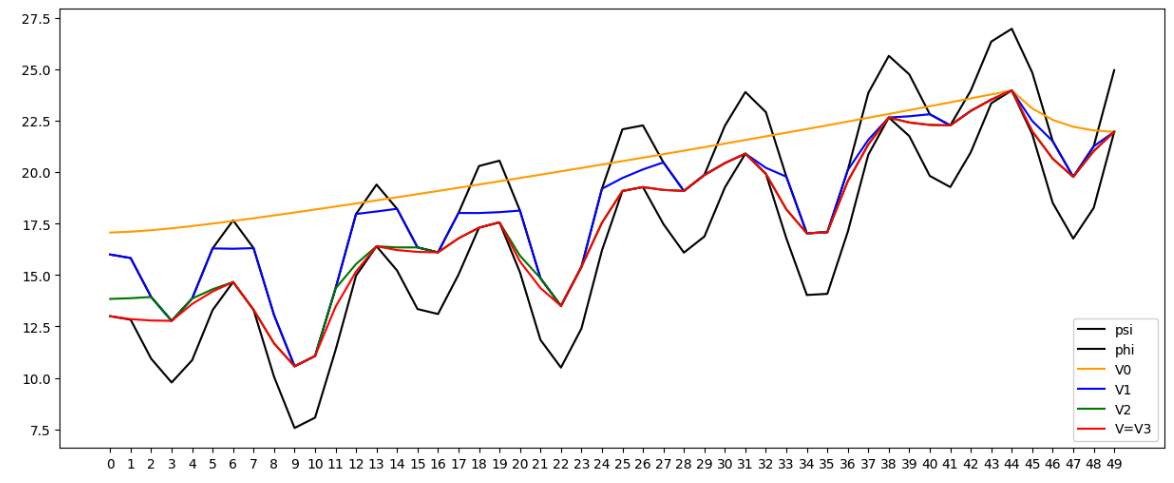}
  		  \caption{The evolution of $(V_k)_{k\geq 0}$ (here $V_3 = V$)} 
                 \label{fig: testfuncs}
	\end{figure}

    \subsubsection{Subexample 1.2.}
        In this subexample, we keep all $N, \beta, \lambda$ and $r$ as above and modify the function $\phi$ a little bit so that $\{ \phi = \psi \} \neq \emptyset$: 
            \begin{align*}
                \forall x \in \{0,...,49\}, \quad \phi(x) = \psi(x) + 4(\sin(x/5)+0.7)_+
            \end{align*}
        Then the evolution of functions $(V_k)_{k\geq 0}$ are given in Figure \ref{fig: testfuncs1}. Note that the sequence $(V_k)_{k\geq 0}$ still lies in the "sandwich" $\psi \leq \phi$, except $V_0$.
        The evolution of $(D_k)_{k = 1,2,3,4}$ and $(S_k)_{k = 1 ,2 ,3,4}$ in this case are given by 
            \begin{align*}
                D_1 &=  \{20, 21, 22, 23, 24, 27, 31, 38, 44, 49\}, \\
                D_2 &= \{13, 18, 19, 20, 21, 22, 23, 24, 27, 31, 32, 38, 44, 49\} ,  \\
                D_3 &= D_4 = \{6, 13, 18, 19, 20, 21, 22, 23, 24, 27, 31, 32, 38, 44, 49\},\\
                S_1 &= \{0, 1, 2, 3, 4, 5, 8, 9, 10, 11, 14, 15, 16, 17, 18, 19, 20, 21, 22, 23,\\
                & \hspace{3cm}  24, 25, 26, 27, 28, 29, 30, 33, 34, 35, 46, 47, 48\} \\ 
                S_2 &= \{0, 2, 3, 4, 9, 10, 15, 16, 17, 20, 21, 22, 23, 24, 25, 26, 27, 28, 29, 34, 35, 47, 48\},  \\
                S_3 &=  \{3, 9, 10, 16, 20, 21, 22, 23, 24, 25, 26, 27, 28, 29, 34, 35, 47, 48\}, \\
                S_4 &=  \{9, 10, 16, 20, 21, 22, 23, 24, 25, 26, 27, 28, 29, 34, 35, 47, 48\}.
            \end{align*}
            \begin{figure}
                 \centering
                 \includegraphics[width=\linewidth]{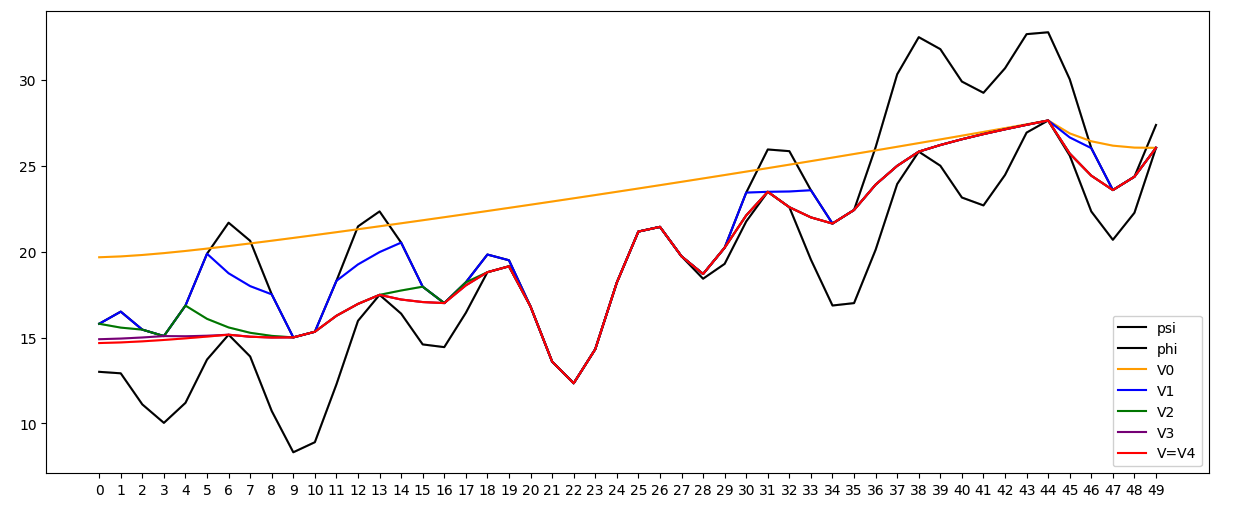}
                 \caption{The evolution of $(V_k)_{k\geq 0}$ (here $V_4=V$)} 
                 \label{fig: testfuncs1}
            \end{figure}
    \subsubsection{Subexample 1.3.}
        In this subexample, we choose $N = 50, \beta = 0.05, \lambda = 14, r = 12$. The functions $\psi$ and $\phi$ are defined by
            \begin{align*}
                \psi(x) = (-25+x)_+, \qquad \phi(x) = \psi(x) +5, \quad \forall x \in \{ 0,1,...,N-1\}
            \end{align*}
        The evolutions of $(D_k)_{k = 1,2,3,4,5,6}$ and $(S_k)_{k = 1 ,2 ,3,4,5,6}$ in this case are given by 
            \begin{align*}
                D_k =\{ 49\}, \ k= 1,2,3,4,5,6,  \ S_1 = \{0,1,2,...,39\}, \ S_k = \{ 25,26,..., 40 - k\}, \ k = 2,3,4,5,6.
            \end{align*}
        \begin{figure}
                 \centering
                 \includegraphics[width=\linewidth]{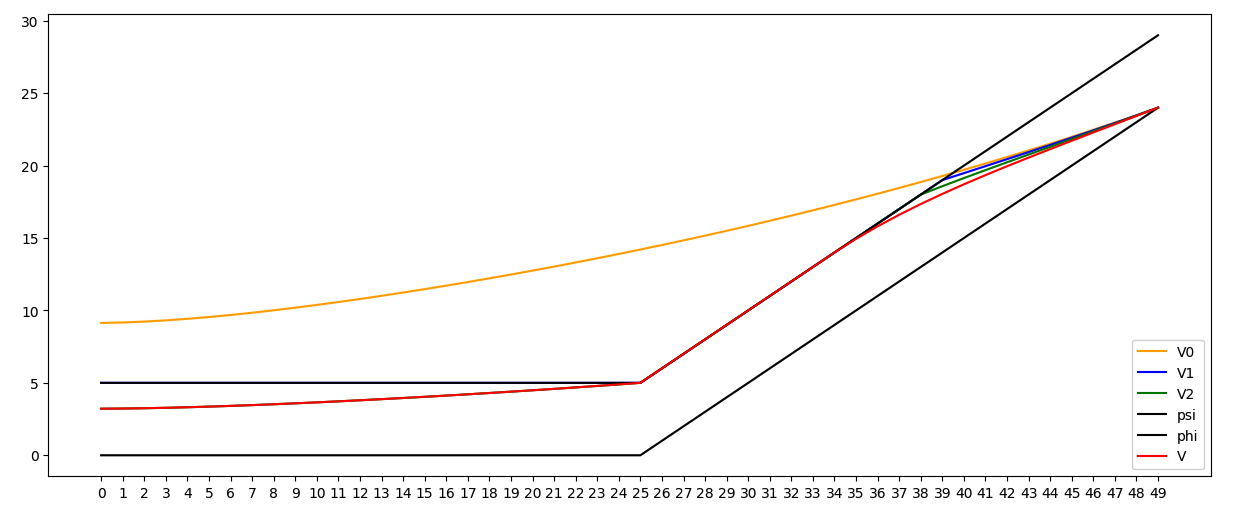}
                 \caption{The evolution of $(V_k)_{k\geq 0}$ (here $V_6 = V$)} .
                 \label{fig: finance}
            \end{figure}
        
    \subsection{Example 2: Random walk on a lattice}
    In this example, we consider a random walk on a lattice $\mathcal L$ with $N\times N$ nodes. This is a grid of points $(x,y)$, where $x, y \in \{0,1,...,N-1 \}$. Given $r >0$, we define the generator $\mathcal Q$ by,  for any  $1 \leq i,j \leq N-1$,
        \begin{align*}
             \quad &Q((i,j), (i+1,j)) = Q((i,j), (i-1,j)) = Q((i,j), (i,j+1))= Q((i,j), (i,j-1)) = r, 
        \end{align*}
 for all $ 1 \leq j \leq N-1$, 
        \begin{align*}
              Q((j,0),(j+1,0)) = Q((j,0),(j-1,0)) = Q((j,1),(j,0)) = r,\\
             Q((j,N-1),(j+1,N-1)) = Q((j,N-1),(j-1,N-1)) = Q((j,N-1),(j,N-2)) = r,
        \end{align*}
    and the rest of transitions are $0$. As can be readily seen, the lines $x = 0$ and $x = N-1$ are absorbing, i.e. when the process hits the lines $x = 0$ or $x = N-1$, it stops there forever. To make it work on a programming language like Python, we introduce the following bijection between the grid points and the sets $\{0,1,..., N^2-1\}$ as follows 
        \begin{align*}
            \forall (i,j) \in \mathcal L, \quad p((i,j)) \coloneqq  i +jN.
        \end{align*}
    In doing so, we regard $X = (X_t)_{t\geq 0}$ as a process on the set $\{ 0,..., N^2-1 \}$.
\subsubsection{Subexample 2.1.}
    Define the functions $\psi$ and $\phi$ as follows 
        \begin{align*}
            \psi(x)  = (x - N^2/2 )_+, \quad \phi(x) = \psi(x) + \delta, \quad \forall  x = 0,1,...,N^2-1,
        \end{align*}
    where $\delta > 0$ is a fixed constant. Note that the closer to the line $y = N-1$ the process is (corresponding to the values near $N^2-1$), the bigger $\psi$ and $\phi$ gets.\\ 

    In what follows, we choose $N = 13$, $\beta = 0.05$, $r = 5$ and $\delta = 8$. The algorithm gives  the following evolution of the sets $(D_k)_{k\geq 1}$ (points in blue) and $(S_k)_{k \geq 1}$ (in red) and stops after calculating $V_4$.
\begin{figure}
    \centering
    \begin{subfigure}{0.24\linewidth}
        \centering
        \includegraphics[width=\linewidth]{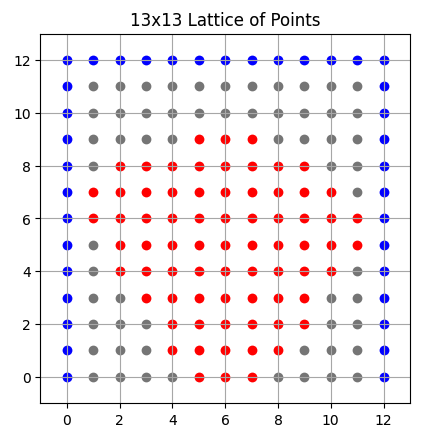}
    \end{subfigure}
    \hfill
    \begin{subfigure}{0.24\linewidth}
        \centering
        \includegraphics[width=\linewidth]{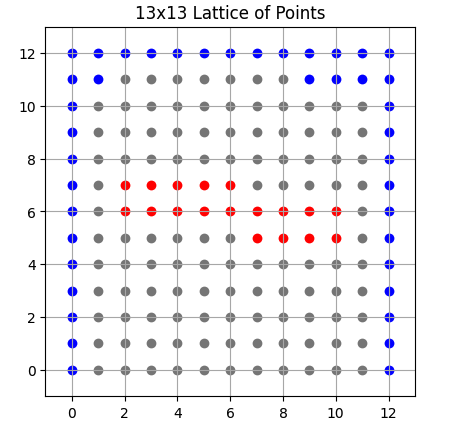}
    \end{subfigure}
    \hfill
    \begin{subfigure}{0.24\linewidth}
        \centering
        \includegraphics[width=\linewidth]{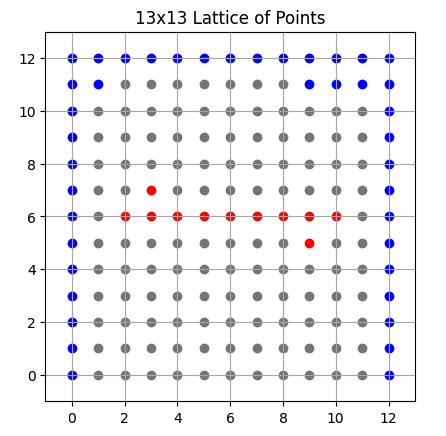}
    \end{subfigure}
    \hfill
    \begin{subfigure}{0.24\linewidth}
        \centering
        \includegraphics[width=\linewidth]{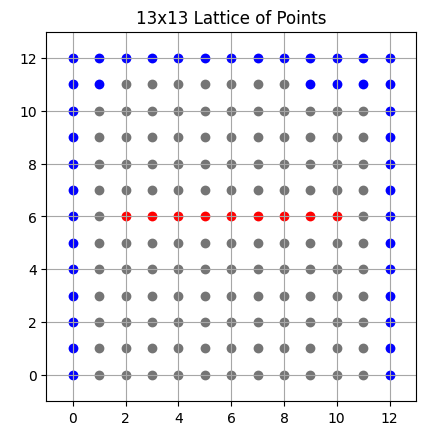}
    \end{subfigure}

    \caption{The evolution of stopping regions $(D_k,S_k)_{k= 1,2,3,4}$ from left to right.}
    \label{fig:DSk}
\end{figure}
    In view of functions on $\{0,1,...,N^2-1\}$, we can draw $\psi, \phi$ (in connected lines), $V_0$ and the value function $V$ in 1-dimension in Figure \ref{fig: functions1} below. 
        \begin{figure}
                 \centering
                 \includegraphics[width=\linewidth]{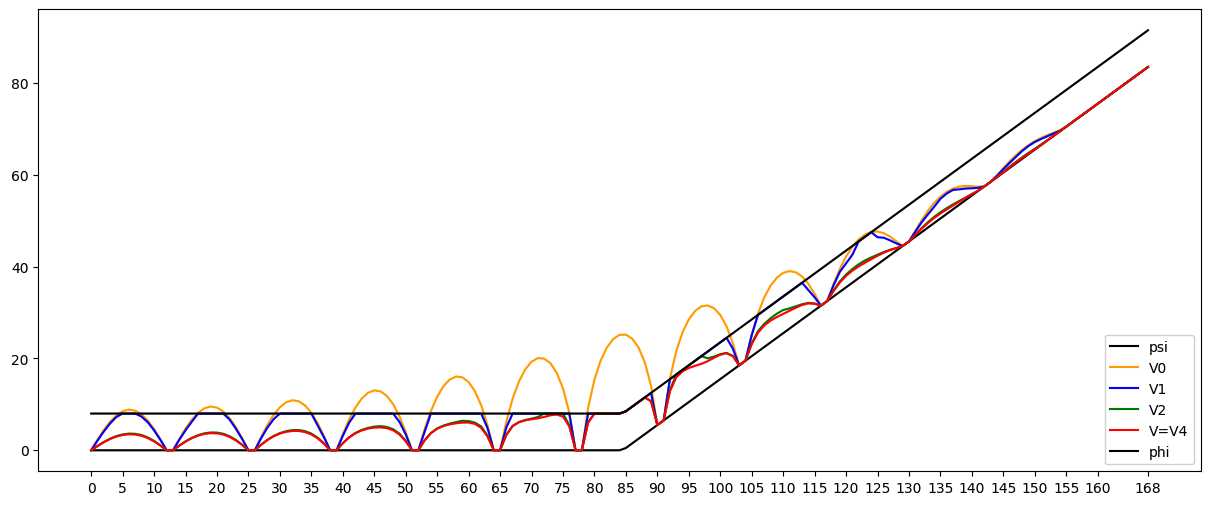}
                 \caption{The functions $\psi, \phi, V_0, V$ on $\{0,1,...,168\}$ ($N = 13$).} 
                 \label{fig: functions1}
        \end{figure}
\subsubsection{Subexample 2.2.}
    We now consider a second sub-example in which the function $\phi$ is slightly modified so that the set $\{\phi = \psi\} \neq \emptyset.$ Specifically, we set $\phi = 1.5\,\psi$ and increase the rate parameter to $r = 500$ and $\beta = 1$. The evolution of the sets $(D_k, S_k)_{k \geq 1}$ is illustrated in Figure~\ref{fig: DSk2}. The set $\{\phi = \psi\}$ is denoted by green points.  The elements of $S_k \setminus \{\phi = \psi\}$ are represented by red points, so that $S_k$ is the union of the  triangles and squares. Finally, the sets $D_k$ are depicted by blue points.

The algorithm terminates after computing $V_4$, which corresponds to the final value function.
        \begin{figure}
                 \centering
    \begin{subfigure}{0.24\linewidth}
        \centering
        \includegraphics[width=\linewidth]{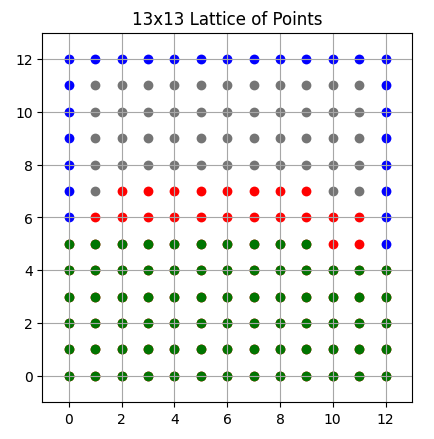}
    \end{subfigure}
    \hfill
    \begin{subfigure}{0.24\linewidth}
        \centering
        \includegraphics[width=\linewidth]{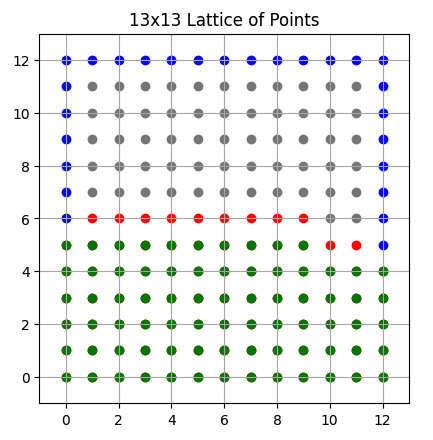}
    \end{subfigure}
    \hfill
    \begin{subfigure}{0.24\linewidth}
        \centering
        \includegraphics[width=\linewidth]{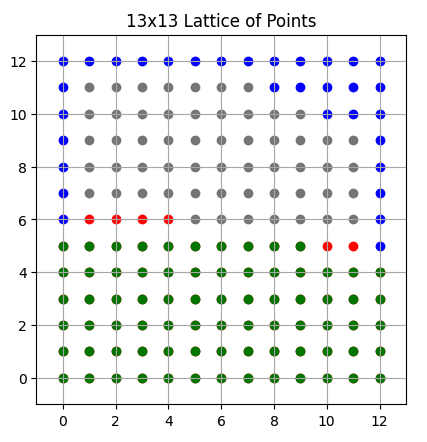}
    \end{subfigure}
    \hfill
    \begin{subfigure}{0.24\linewidth}
        \centering
        \includegraphics[width=\linewidth]{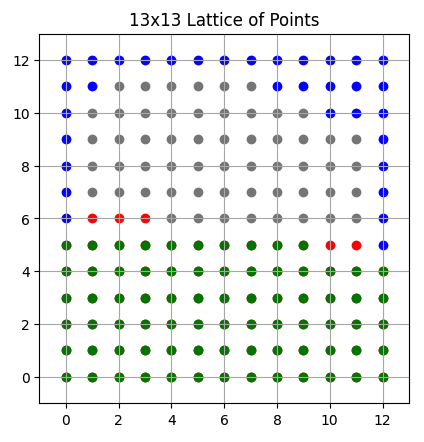}
    \end{subfigure}
                  \caption{The evolution of $(D_k,S_k)_{k\geq 1}$} 
                 \label{fig: DSk2}
        \end{figure}
    In one dimension, we can view the evolution of $(V_k)_{k \geq 0}$ as in Figure \ref{fig: functions3}. The function $V_3$ is omitted from the figure because its graph overlaps significantly with other curves, making it difficult to distinguish visually.
        \begin{figure}
                 \centering
                 \includegraphics[width=\linewidth]{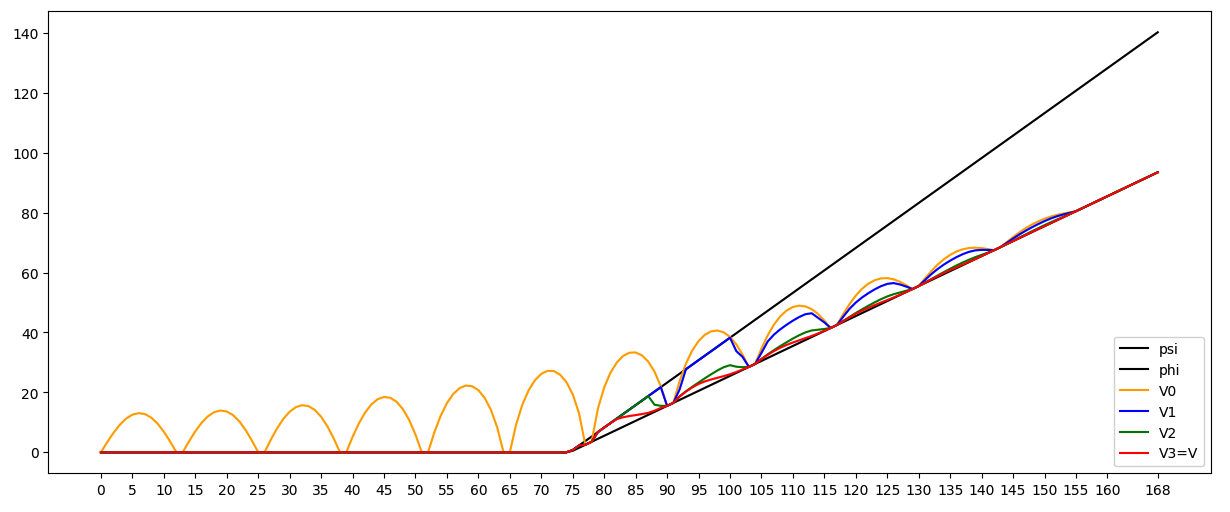}
                 \caption{The functions $\psi, \phi, V_0, V_1, V_2,  V_4 = V$ on $\{0,1,...,168\}$ ($N = 13$)} 
                 \label{fig: functions3}
    \end{figure}
    
    \subsection{Examples when $\widetilde S_\infty \neq S_\infty$ and $ \widetilde S_\infty = S_\infty $} \label{differentlimiting}
    In this section we consider $E = \{0,1,2,3 \}$, $\beta = 0.2$ and the generator $\Q$ is given by 
		\begin{align*}
		 \forall i =0,1,2, \quad Q(i,i+1) =1, \quad \text{and} \quad \forall i = 1,2,3, \quad   Q(i,i-1) =1.
		\end{align*}
	\subsubsection{The case when $\widetilde S_\infty = S_\infty$.}
	 In this subsection, $\psi$ is chosen such that
    	\begin{align*}
		\psi(0) =10, \ \psi(1) =4, \ \psi(2) =2, \ \psi(3) =1.
	\end{align*}
	We then compute $V_0$ and choose $\phi$ as follow
	\begin{align*}
		V_0(0) &= 10, \quad V_0(1) =  6.8106 , \quad V_0(2) =  4.9834, \quad V_0(3) = 4.1528, \\
		\phi(0) &=12, \quad \phi(1) =8, \quad \phi(2) = V_0(2), \quad \phi(3) =1.
	\end{align*}
	In this case, $S_1 = \{ 3\}$ and $\widetilde S_1 = \{2,3\}$ and the limiting sets are equal $\widetilde S_\infty = S_\infty = \{  3 \}$. This is illustrated in Figure \ref{fig: equal}.
                 \begin{figure}
                 \centering
    \begin{subfigure}{0.48\linewidth}
        \centering
        \includegraphics[width=\linewidth]{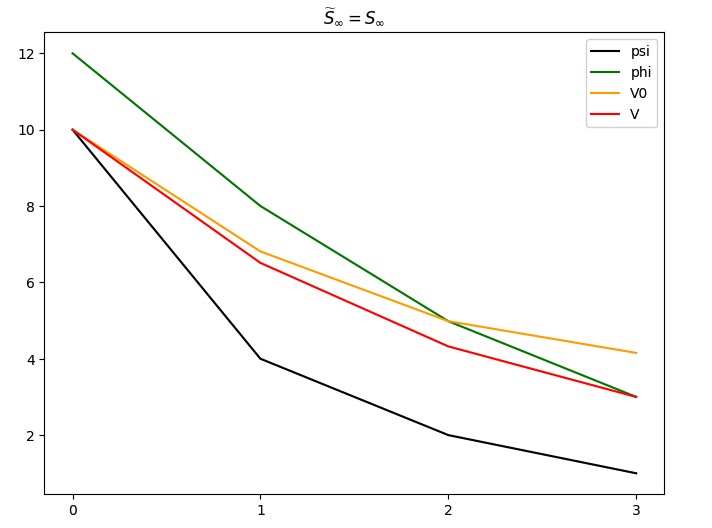}
        	\caption{The case when $\widetilde S_\infty = S_\infty$} 
	\label{fig: equal}
    \end{subfigure}
    \hfill
    \begin{subfigure}{0.48\linewidth}
        \centering
        \includegraphics[width=\linewidth]{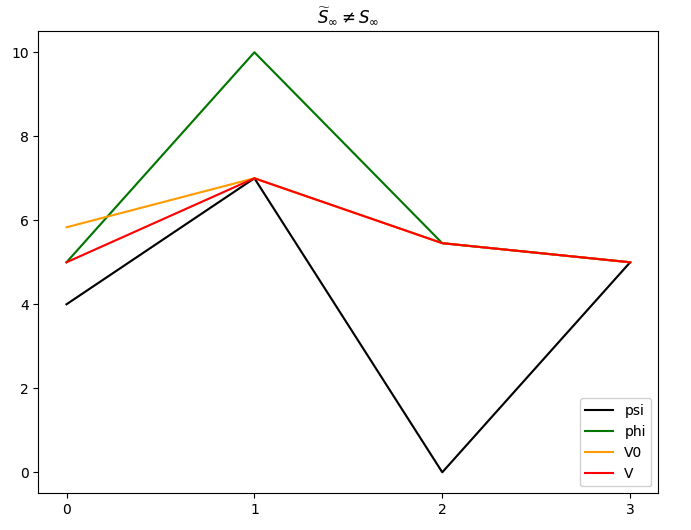}
        \caption{The case when $\widetilde S_\infty \neq S_\infty$} 
            \label{fig: neq}
    \end{subfigure}
    \hfill
    \caption{Different initializations may lead to different NE.}
    \label{fig:diff}
    \end{figure}
    \subsubsection{The case when $\widetilde S_\infty \neq S_\infty$.}
	 In this subsection, $\psi$ is chosen such that
    	\begin{align*}
		\psi(0) = 4, \ \psi(1) = 7, \ \psi(2) =0, \ \psi(3) =5.
	\end{align*}
	We then compute $V_0$ and choose $\phi$ as follow
	\begin{align*}
		V_0(0) &=  35/6, \quad V_0(1) =  7 , \quad V_0(2) = 60/11, \quad V_0(3) = 5, \\
		\phi(0) &= 5, \quad \phi(1) = 10, \quad \phi(2) = V_0(2) = 60/11, \quad \phi(3) = V_0(3) = 5.
	\end{align*}
	In this case, $S_1 = \{ 0, 3\}$ and $\widetilde S_1 = \{0, 2,3\}$ and the limiting sets are $S_\infty = \{  0, 3 \}$ while $\widetilde S_\infty= \{ 0,2,3\}$. This example is illustrated in Figure \ref{fig: neq}.

\end{document}